\theoremstyle{plain}
\newtheorem{thm}{Theorem}[section]
\newtheorem{lem}[thm]{Lemma}
\newtheorem{cor}[thm]{Corollary}
\newtheorem{prop}[thm]{Proposition}
\theoremstyle{definition}
\newenvironment{ex}
  {\pushQED{\qed}\examplex}
  {\popQED\endexamplex}
\newtheorem{rem}[thm]{Remark}
\theoremstyle{definition}
\newtheorem{defi}[thm]{Definition}
\newcommand{\dual}[1]{\widehat{#1}}
\DeclareMathOperator{\sln}{sl_{n}}
\DeclareMathOperator{\N}{\mathbb{N}}
\DeclareMathOperator{\GG}{\mathbb{G}_m}
\DeclareMathOperator{\Z}{\mathbb{Z}}
\DeclareMathOperator{\C}{\mathbb{C}}
\DeclareMathOperator{\sgn}{sgn}
\DeclareMathOperator{\Hom}{Hom}
\DeclareMathOperator{\unip}{\mathcal{N}}
\newcommand{\cl}[1] {\overline{#1}}
\newcommand{\op}[1] {\mathring{#1}}
\newcommand{\li}{\mathcal{L}}
\newcommand{\h}{\mathcal{H}}
\newcommand{\mm}[2]{x_{ #1, #2}}
\newcommand{\nn}[2]{y_{ #1, #2}}
\newcommand{\Gam}[1]{\xi^{#1}}
\DeclareMathOperator{\starop}{\star op}
\DeclareMathOperator{\trl}{\Phi}
\DeclareMathOperator{\trs}{\Psi}
\DeclareMathOperator{\pol}{pol}
\newcommand{\stp}{\mathbf{s}}
\DeclareMathOperator{\CA}{CA}
\DeclareMathOperator{\BZ}{\mathcal{P}}
\DeclareMathOperator{\ii}{\mathbf{i}}
\DeclareMathOperator{\jj}{\mathbf{j}}
\DeclareMathOperator{\trop}{trop}
\newcommand{\stc}{\mathbb{S}}
\DeclareMathOperator{\rpot}{\varrho}
\DeclareMathOperator{\RR}{\phi}
\DeclareMathOperator{\zzz}{z}
\DeclareMathOperator{\bbb}{b}
\newcommand{\TTix}{\dual{\mathbb{T}}_{\ii}}
\newcommand{\TTixp}{\dual{\mathbb{T}}_{\jj}}
\DeclareMathOperator{\TTia}{\mathbb{T}_{ \ii}}
\DeclareMathOperator{\TTiap}{\mathbb{T}_{\jj}}
\DeclareMathOperator{\TTse}{\mathbb{T}_{\Sigma}}
\DeclareMathOperator{\TTsea}{\mathbb{T}_{ \Sigma}}
\newcommand{\TTsex}{\dual{\mathbb{T}}_{\Sigma}}
\newcommand{\TTsexp}{\dual{\mathbb{T}}_{\Sigma'}}
\DeclareMathOperator{\SE}{\Sigma}
\DeclareMathOperator{\SEp}{\Sigma'}
\DeclareMathOperator{\TTsep}{\mathbb{T}_{\Sigma'}}
\newcommand{\ttsx}{\dual{\mu}_{\SEp}^{\SE}}
\DeclareMathOperator{\ttsa}{{\mu}_{\SEp}^{\SE}}
\newcommand{\ttix}{\dual{\mu}_{\jj}^{\ii}}
\DeclareMathOperator{\ttia}{{\mu}_{\jj}^{\ii}}
\DeclareMathOperator{\Csebk}{\mathcal{C}_{\SE}^{\text{BK}}}
\DeclareMathOperator{\SEi}{\Sigma_{\ii}}
\newcommand{\trod}[1]{\left[#1 \right]_{\trop}}
\newcommand{\tro}[1]{[#1]_{\trop}}
\DeclareMathOperator{\rai}{\rpot_{\text{a},\ii}}
\DeclareMathOperator{\raj}{\rpot_{\text{a}, \jj}}
\DeclareMathOperator{\ra}{\rpot_{\text{a}}}
\DeclareMathOperator{\rr}{\varrho}
\newcommand{\ssai}{\dual{\rr}_{\text{a},\ii}}
\newcommand{\ssaj}{\dual{\rr}_{\text{a}, \jj}}
\newcommand{\ssa}{\dual{\rr}_{\text{a}}}
\DeclareSymbolFont{matha}{OML}{txmi}{m}{it}% txfonts
\DeclareMathSymbol{\varv}{\mathord}{matha}{118}
\DeclareFontFamily{U}{mathx}{\hyphenchar\font45}
\DeclareFontShape{U}{mathx}{m}{n}{
      <5> <6> <7> <8> <9> <10>
      <10.95> <12> <14.4> <17.28> <20.74> <24.88>
      mathx10
      }{}
\DeclareSymbolFont{mathx}{U}{mathx}{m}{n}
\DeclareMathAccent{\widecheck}{0}{mathx}{"71}
\DeclareMathOperator{\GGi}{\mathbb{G}_m^{\mathcal{T}_{\ii}}}
\newcommand{\TTis}{\dual{\mathbb{L}}_{\ii}}
\DeclareMathOperator{\TTil}{{\mathbb{L}}_{\ii}}
\DeclareMathOperator{\TTilp}{\mathbb{L}_{\jj}}
\newcommand{\TTisp}{\dual{\mathbb{L}}_{\jj}}
\DeclareMathOperator{\pp}{p}
\DeclareMathOperator{\ppi}{p_{\ii}}
\newcommand{\rvec}[1]{\mathfrak{r} {#1}}
\newcommand{\rvecs}[1]{(\mathfrak{r} {#1})}
\newcommand{\rvecd}[1]{\mathfrak{r} {#1}}
\newcommand{\rveck}[1]{\mathfrak{r} ({#1})}
\newcommand{\rvecds}[1]{\left(\mathfrak{r} {#1}\right)}
\newcommand{\drvec}[1]{\mathfrak{s} {#1}}
\newcommand{\drvecx}[2]{(\mathfrak{s} #1) (#2)}
\newcommand{\drvecxd}[2]{\left(\mathfrak{s} #1\right) \left(#2 \right)}
\newcommand{\drveck}[1]{\mathfrak{s} (#1)}
\DeclareMathOperator{\iot}{NA}
\DeclareMathOperator{\CAi}{CA_{\ii}}
\newcommand{\CAid}{\dual{\CA}_{\ii}}
\newcommand{\CAjd}{\dual{\CA}_{\jj}}
\DeclareMathOperator{\ioti}{\iot_{\ii}}
\DeclareMathOperator{\iotj}{\iot_{\jj}}
\newcommand\restr[2]{{
  \left.\kern-\nulldelimiterspace 
  #1 
  \vphantom{\big|} 
  \right|_{#2} 
  }}
\renewcommand*\env@cases[1][1.2]{%
  \let\@ifnextchar\new@ifnextchar
  \left\lbrace
  \def\arraystretch{#1}%
  \array{@{}l@{\quad}l@{}}%
}
\begin{document}

\title{Combinatorics of canonical bases revisited: Type A}

\author{Volker Genz}
\address{Mathematical Institute, Ruhr-Universit{\"a}t Bochum}
\email{volker.genz@gmail.com}

\author{Gleb Koshevoy}
\address{CEMI Russian Academy of Sciences, MCCME and Poncelet laboratory (CNRS and IMU)}
\email{koshevoy@cemi.rssi.ru}

\author{Bea Schumann}
\address{Mathematical Institute, University of Cologne}
\email{schumann.bea@gmail.com}

\begin{abstract}
We initiate a new approach to the study of the combinatorics of several parametrizations of canonical bases. In this work we deal with Lie algebras of type $A$. Using geometric objects called rhombic tilings we derive a "Crossing Formula" for the action of the crystal operators on Lusztig data for an arbitrary reduced word of the longest Weyl group element. We provide the following three applications of this result. Using the tropical Chamber Ansatz of Berenstein-Fomin-Zelevinsky we prove an enhanced version of the Anderson-Mirkovi\'c conjecture for the crystal structure on MV polytopes. We establish a duality between Kashiwara's string and Lusztig's parametrization, revealing that each of them is controlled by the crystal structure of the other. We identify the potential functions of the unipotent radical of a Borel subgroup of $SL_n$ defined by Berenstein-Kazhdan and Gross-Hacking-Keel-Kontsevich, respectively, with a function arising from the crystal structure on Lusztig data.
\end{abstract}

\maketitle

\tableofcontents

\section*{Introduction}
Let $\mathfrak{g}$ be a complex simple Lie algebra and $\mathfrak{n}^-$ its negative part in the Cartan decomposition. Bases of PBW type for the universal enveloping algebra of $\mathfrak{n}^-$ play an important role in the study of representations of $\mathfrak{g}$. Passing to the negative part of its quantized enveloping algebra $U_q^-$, Lusztig constructed PBW-type bases by introducing so-called braid group operators. These bases consist of ordered monomials of root vectors $F_{\beta}$, one for each positive root  $\beta$, depending on a choice of convex ordering of the positive roots of $\mathfrak{g}$. 

Lusztig discovered in \cite{Lu} a basis $\mathbf B$ of $U_q^-$, called the canonical basis, with various favorable properties. The canonical basis is, by construction, in natural bijection with any PBW-type basis which leads to remarkable piecewise linear combinatorics. 

Fixing a convex ordering $\le$ of the positive roots we can identify a PBW-type basis element $b$ with the exponent vector of the corresponding monomial in the $\le$-ordered root vectors. This element of $\mathbb{N}^N$, where $N$ is the number of positive roots of $\mathfrak{g}$, is called the $\le$-Lusztig datum of $b$ (or equivalent the $\le$-Lusztig datum of the element $\mathbf b\in \mathbf B$ to which $b$ maps under the natural bijection of the PBW-type basis with $\mathbf B$). On the set of all $\le$-Lusztig data we have a crystal structure in the sense of \cite{Ka} inducing a colored graph on the canonical basis. The vertices of this graph, called the crystal graph, are given by the $\le$-Lusztig data and the arrows are induced by Kashiwara operators $f_a$, one for every simple root $\alpha_a$. The action of $f_a$ on a $\le$-Lusztig datum is given explicitly if $\alpha_a$ is the $\le$-minimal positive root. In general the computation of $f_a$ is difficult since it involves a composition of piecewise linear maps.

We aim to give a formula for the computation of $f_a$ that works for all choices of convex orderings. In this paper we solve this problem for simple Lie algebras of type $A$.
 
We assume from now on that $\mathfrak{g}=\sln(\mathbb{C})$. Let $I$ be the index set of simple roots of $\mathfrak{g}$. For each $a\in I$ and each convex order $\le$ of the positive roots of $\mathfrak{g}$ we define in Section \ref{orderlatticesec} finitely many sequences $\gamma=(\gamma_j)$ of positive roots of $\mathfrak{g}$ with certain properties which we call \emph{$a$-crossings}. These sequences come with an order relation $\preceq$. {To each $a$-crossing $\gamma$ we associate a vector $\rvec{\gamma}\in \mathbb{Z}^N$ and linear form $\drvec{\gamma}\in\Hom(\mathbb{Z}^N, \Z)$.} 
Our main result reads as follows.
\begin{thm}[Crossing Formula (Theorem \ref{formula1})] For a $\le$-Lusztig datum $x$ let $\gamma$ be $\preceq$ maximal such that $\drvecx{\gamma}{x}$ is maximal. Then $f_a x=x+\rvec{\gamma}.$ Furthermore, the set of all $\gamma$ with $\rvec{\gamma}=f_a x-x$ can be described explicitly.
\end{thm}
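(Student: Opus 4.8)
The plan is to establish the Crossing Formula by transporting the already-known action of $f_a$ from a distinguished convex order to an arbitrary one, exploiting the fact that any two convex orders are connected by a chain of braid moves and that each braid move corresponds to an elementary flip of the associated rhombic tiling. Concretely, I would show that the statement is \emph{compatible with flips}: if it holds for a convex order $\le'$, then it holds for every $\le$ obtained from $\le'$ by a single flip. Since in type $A$ the convex orders (equivalently, the reduced words of the longest element) form a connected graph under braid moves, and since the formula is immediate in the base case below, this propagates the result to all convex orders.

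\emph{Base case.} When $\alpha_a$ is the $\le$-minimal positive root the reduced word begins with the letter $a$, and, as recalled in the introduction, $f_a$ increments the coordinate of $x$ indexed by $\alpha_a$. The one-term sequence $\gamma=(\alpha_a)$ is then an $a$-crossing whose increment vector $\rvec{\gamma}$ is exactly this coordinate vector, and one checks directly that it is $\preceq$-maximal among the maximizers of $\drvecx{\cdot}{x}$. Hence $f_a x = x + \rvec{\gamma}$ holds, and the optimizer is the expected one.

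\emph{Inductive step.} A flip relating $\le'$ to $\le$ changes the Lusztig data by the tropicalized Berenstein--Fomin--Zelevinsky transition map $R$, which is the identity on all coordinates except the three involved in the flip, where it acts by the rank-two tropical map $(u,v,w)\mapsto(v+w-m,\,m,\,u+v-m)$ with $m=\min(u,w)$ (or $\max(u,w)$, according to the tropicalization convention). By the inductive hypothesis $f_a$ is given in $\le'$-coordinates by the Crossing Formula, so in $\le$-coordinates it is the conjugate $x\mapsto R^{-1}\big(R(x)+\rvec{\gamma'}\big)$, with $\gamma'$ optimal for $R(x)$. The combinatorial heart of the step is to prove that the set of $a$-crossings for $\le$, together with the vectors $\rvec{}$, the forms $\drvec{}$, and the order $\preceq$, is precisely the image under the flip of the corresponding data for $\le'$, in a way that intertwines the selection rules on the two sides. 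Here the rhombic tiling is the essential bookkeeping device: the $a$-crossings are read off as admissible routes through the tiling, and a flip modifies these routes locally, matched term-by-term to the three-coordinate formula for $R$.

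\emph{The hard part.} The genuine obstacle is the interaction between the $\min$ appearing in $R$ and the two-stage selection rule (first maximize $\drvecx{\gamma}{x}$, then pick the $\preceq$-maximal optimizer). Pulling the expression $\max_{\gamma'}\drvecx{\gamma'}{x'}$ back through $R$ a priori produces a coarser piecewise-linear function, and one must verify three things: that it collapses to $\max_{\gamma}\drvecx{\gamma}{x}$ over the enlarged crossing set for $\le$; that on each domain of linearity the optimal $\gamma$ yields the increment demanded by $R^{-1}$; and, crucially, that across the walls $u=w$ of $R$ --- where several crossings attain the maximal $\drvec{}$-value simultaneously --- the $\preceq$-maximal optimizer is exactly the one producing the correct, well-defined value of $f_a x$. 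This wall-crossing coherence is where the order $\preceq$ earns its keep, and verifying it is the technical core of the argument. The same wall analysis then delivers the ``furthermore'' clause: the crossings $\gamma$ with $\rvec{\gamma}=f_a x - x$ are precisely those obtained from the optimal one by sliding along walls of the tiling, a set that the rhombic picture describes explicitly.
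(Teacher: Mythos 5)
Your strategy coincides with the paper's: both arguments induct over braid moves/flips starting from the case where the tile $[a,a+1]$ meets the left boundary (equivalently $i_1=a$), and both locate the difficulty in showing that the crossing data and the two-stage selection rule are intertwined by the tropical transition map \eqref{tlmutb}. However, what you label ``the combinatorial heart'' and ``the technical core'' is precisely where your write-up stops and where essentially all of the paper's proof takes place, so as it stands this is a plan rather than a proof. Three things are asserted but not established. First, the crossings for $\le$ are \emph{not} simply ``the image under the flip'' of those for $\le'$: the paper shows (Proposition \ref{orderpres}) that $\Gamma_a$ is a fiber product $\Lambda_a\times_Y X$ over a small poset of local types at the hexagon (Definition \ref{quots}), with the projection $\pi$ two-to-one over certain fibers; this distributive-lattice structure is what later guarantees that the $\preceq$-maximal optimizer downstairs lifts to the $\preceq$-maximal optimizer upstairs, i.e.\ it is exactly the ``wall-crossing coherence'' you flag. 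Second, the collapse of the pulled-back maximum requires the explicit identities $\drvecx{\gamma}{x}-\drvecx{\pi\gamma}{y}\in\{0,\min(0,\cdot)\}$ computed case by case in \eqref{fdel1}, \eqref{fdel1b}, \eqref{fdel1c}; these are what match the $\min$ in \eqref{tlmutb} and are not automatic. Third, your induction runs along an arbitrary geodesic in the flip graph, so you would have to verify compatibility for \emph{every} hexagon; the paper instead inducts on $\#\mathcal{W}_a$ and, via Lemma \ref{hexexists}, only ever flips at hexagons of the special form $\{[a,s],[a,t],[s,t]\}$ inside the $a$-comb, which is what keeps the analysis down to the three cases $a<t<s$, $t<s<a$, $s<a<t$. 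Until these verifications are carried out the argument has a genuine gap, even though the route you describe is the right one.
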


Our work was inspired by the results of \cite{Rei}. Here Reineke obtained an analogue of the Crossing Formula for convex orderings adapted to Dynkin quivers satisfying a homological condition. This was achieved using Ringel's Hall algebra which is isomorphic to $U_q^-$. Under this isomorphism the PBW-type bases for convex ordering are given naturally in terms of representations of quivers. However, for PBW-type bases corresponding to convex orderings that are not adapted to a quiver there are no comparable techniques available. The crossing formula is a generalization of \cite{Rei} to all convex orderings in type $A$ obtained by purely combinatorial methods.

Since a $\le$-Lusztig datum is given by a vector $x\in\mathbb{N}^N$, the action of the Kashiwara operator $f_a$ is given by adding a vector in $f_a x-x\in\mathbb{Z}^N$. We call vectors of the form $f_a x-x$   \emph{Reineke vectors}. Surprisingly these vectors play an important role in the interplay of various parametrizations of canonical bases. The rest of this work is concerned with this relationship.

In Section \ref{MV} we deal with Mirkovi\'c Vilonen (MV) polytopes. These are momentum polytopes associated to Mirkovi\'c Vilonen cycles, certain subvarieties of the affine Grassmannian. Each MV polytope packs together all Lusztig data for varying convex orderings $\le$ that are in bijection with a fixed canonical basis element. Each Lusztig datum can be read of from a distinguished path in the 1-skeleton of the polytope. This defines a crystal isomorphism between the crystal structure on MV polytopes and the crystal structure on $\le$-Lusztig data (\cite{Kam}).

An explicit description of the action of the Kashiwara operators on MV polytopes is desirable and in particular would lead to an inductive procedure to generate all such polytopes. Anderson and Mirkovi\'c conjectured such a description. This conjecture was proved in type $A$ by Kamnitzer (\cite{Kam}) and subsequently Saito (\cite{Sai}). While Kamnitzer gave a counterexample in type $C$ it remains an open question in simply-laced types. Using our description of Reineke vectors in Theorem \ref{formula1} we prove a stronger statement and obtain a new proof of the Anderson-Mirkovi\'c conjecture for type $A$, which provides a conceptual explanation.

In \cite{NS} a modified version of the Anderson-Mirkovi\'c conjecture for the types $B$ and $C$ is proved. It would be interesting to see if their result can also deduced from our methods using folding techniques.

Let $\unip$ be the unipotent radical of a Borel subgroup of $\text{SL}_n(\mathbb{C})$. There are two natural ways to realize an irreducible highest weight representation $V$ of $\mathfrak{g}$. The representation $V$ appears as a quotient of $U(\mathfrak{n}^-)$ but also canonically as a subspace of the coordinate ring $\mathbb{C}[\unip]$ of $\mathcal{N}$ which can be identified with the graded dual $U(\mathfrak{n}^-)^*$. This leads to another parametrizations of $\mathbf{B}$ in terms of string data following work of Kashiwara, Berenstein-Zelevinsky and Littelmann (\cite{Ka2}, \cite{BZ}, \cite{Lit}). This string parametrization, encoded in the string cone, can be thought of more naturally as a parametrization of the dual basis $\mathbf{B^*}$.

On the one hand the action of the Kashiwara operators on the string cone is given by an explicit formula while the inequalities of this cone seem intricate. On the other hand the cone corresponding to Lusztig data is the standard cone given by the positive orthant while the action of the Kashiwara operators seems mysterious. In the Crossing Formula (Theorem \ref{formula1}) we establish a formula for the action of the Kashiwara operators on any $\le$-Lusztig datum in the flavor of the description on string cones. 

In \cite{Ze} Zelikson obtained the remarkable result that for convex orderings $\le$ adapted to quivers of type $A$ the Reineke vectors associated to the set of $\le$-Lusztig data yield defining inequalities of the string cone associated to the reversed order of $\le$. In Section \ref{string} we extend Zelikson's result to all convex orderings in type $A$. This is obtained by attaching a Laurent polynomial $r_{\le}\in \mathbb{Z}[x_{\beta}\mid \beta \in \Phi^{+}]$ to the set of Reineke vectors associated to a convex ordering $\le$. Surprisingly, the functions $r_{\le}$ transform under the geometric lifting of the piecewise-linear transformation of the string cone defined by Berenstein and Zelevinsky in \cite{BZ2}. By this result we recover the string cone inequalities for type $A$ of Gleizer-Postnikov (\cite{GP}). We show that also the dual statement holds: The set of vectors describing the crystal structure on the string cone give the defining inequalities of the positive orthant and therefore the cone of $\le$-Lusztig data. 

In Section \ref{cluster} we recover the duality between Lusztig's parametrization and the string cone in the setup of cluster algebras and mirror symmetry. We identify $r_{\le}$ with two potential functions on the unipotent radical $\unip$ appearing in the current literature. The first potential function $f_{\chi}$ was defined by Berenstein and Kazhdan in the context of geometric crystals in \cite{BK} and the other potential function $W$ by Gross, Hacking, Keel and Kontsevich in the context of mirror duality for cluster varieties in \cite{GHKK}. Each convex ordering $\le$ on $\Phi^+$ defines a cluster in the coordinate ring of $\unip$ and thereby an open embedding of a torus $\mathbf{T}_{\le}$ into $\unip$. We show that $r_{\le}$ coincides (up to an explicit isomorphism) with the restrictions of $f_{\chi}$ and $W$ to $\mathbf{T}_{\le}$. Using this identification and the representation theoretical interpretation of $r_{\le}$ via the Crossing Formula, we obtain explicit formulas for $W$ and $f$ in the corresponding cluster coordinates. By this description we prove structural results about the potential functions (see Theorem \ref{GHKK=Reineke} and Theorem \ref{BK=Reineke}). In particular the enhanced version of the Anderson-Mirkovi\'c conjecture proved in Section \ref{MV} implies that the Landau-Ginzburg potential function defined by Gross, Hacking, Keel and Kontsevich restricted to any $\mathbf{T}_{\le}$ is a Laurent polynomial without constant term, all coefficients in $\{0,1\}$ and exponents in $\{0,-1\}$. As another consequence we obtain that the cones attached to the tropicalizations of the potential functions coincide with the string cones (up to an explicit change of coordinates).

\section*{Acknowledgment}
We are very grateful to Jacinta Torres for sharing an observation that led to this collaboration. This project was initiated during G.K.'s visit at the MPIM Bonn which he thanks for financial support. The project was finalized during B.S.'s visit at the University of Tokyo and V.G.'s and B.S.'s visit at the Laboratoire J.-V. Poncelet. V.G. and B.S. would like to thank Yoshihisa Saito for explanations about MV-polytopes that inspired Section \ref{MV}. We would also like to thank Xin Fang and Hironori Oya for very helpful comments. V.G. was partially supported by the Laboratoire J.-V. Poncelet. G.K. was supported by the grant RSF 16-11- 10075. B.S. was supported by a JSPS Postdoctoral Fellowship for Foreign Researchers, partially supported by the DFG Priority program Darstellungstheorie 1388 and the SFB/TRR 191 'Symplectic Structures in Geometry, Algebra and Dynamics', funded by the DFG.

\section{Rhombic tilings and Lusztig data}
\subsection{Notations}\label{notation}
Let $\mathfrak{g}=\sln(\mathbb{C})$ be the Lie algebra of type $A_{n-1}$ and $\mathfrak{h}\subset \mathfrak{g}$ the Cartan subalgebra consisting of the diagonal matrices in $\mathfrak{g}$. For $s\in[n]:=\{1, \ldots, n\}$ let $\epsilon_s\in \mathfrak{h}^*$ be the functional with $\epsilon_s(\text{diag}(h_1, \ldots,h_n))=h_s$. 
Let $\Phi^+$ be the set of positive roots in $\mathfrak{g}$ given by
\begin{equation*}
\Phi^+=\{\epsilon_s-\epsilon_{t} \mid 1\leq s<t \leq n\}.
\end{equation*}
We abbreviate by $N=\frac{n(n-1)}{2}$ the cardinality of $\Phi^+$. We denote by $\alpha_a:=\epsilon_{a}-\epsilon_{a+1}$ ($a\in[n-1]$) the simple roots of $\mathfrak{g}$ and write $\alpha_{s,t}:=\epsilon_s-\epsilon_{t+1}\in\Phi^+$. The fundamental weights of $\mathfrak{g}$ are given by $\omega_a:=\sum_{s\in [a]}\epsilon_s$.

We denote by $U_q^-$ the negative part of the quantized enveloping algebra of $\mathfrak{g}$ with generators $F_a$, by $B(\infty)$ its crystal basis and by $f_a$ and $e_a$ the corresponding Kashiwara-operators (see \cite{Ka}).

Let $W$ be the Weyl group of $\mathfrak{g}$ which is naturally isomorphic to $\mathfrak{S}_n$, the symmetric group in $n$ letters. The group $W$ is generated by the simple reflections $\sigma_i$ ($i \in [n-1]$) interchanging $i$ and $i+1$ with relations
\begin{align} \notag \sigma_i^2 &=id, \\ \notag
 \sigma_{i_1}\sigma_{i_2} & =\sigma_{i_2}\sigma_{i_1} \qquad \, \, \, \text{for }|i_1-i_2|\ge 2 \quad \text{ (commutation relation)},  \\
\sigma_{i_1}\sigma_{i_2}\sigma_{i_1}&=\sigma_{i_2}\sigma_{i_1}\sigma_{i_2} \quad \text{ for }|i_1-i_2|=1 \quad \text{ (braid relation)}. \label{braidrel}
\end{align}

The Weyl group $W$ has a unique longest element $w_0$ of length $N$. For a reduced expression $\sigma_{i_1}\cdots \sigma_{i_N}$ of $w_0$ we write $\ii:=(i_1, \ldots, i_N)$ and call $\ii$ a \emph{reduced word} (for $w_0$).
We call a total ordering $\le$ on $\Phi^+$ \emph{convex} if whenever $\beta_1,\beta_2,\beta_1+\beta_2\in \Phi^+$ we have either $\beta_1 \le \beta_1+\beta_2 \le \beta_2$ or $\beta_2 \le \beta_1+\beta_2 \le \beta_1$.
The set of total convex ordering is in natural bijection with the set of reduced words. Namely, for a reduced word $\ii=(i_1, \ldots,i_N)$ the total ordering $\le_{\ii}$ on $\Phi^+$ given by
\begin{equation}\label{iorder}
\alpha_{i_1}\le_{\ii} s_{i_1}(\alpha_{i_2}) \le_{\ii} \ldots \le_{\ii} s_{i_1}\cdots s_{i_{N-1}}(\alpha_{i_N})
\end{equation}
is convex and every convex ordering on $\Phi^+$ arises that way.

\subsection{Rhombic tilings}
Our main combinatorial tool are rhombic tilings, which are geometric objects corresponding to commutation equivalence classes of convex orderings on $\Phi^+$.
Let $P_0$ be a regular $2n$-gon with side length $1$. By a \emph{(rhombic) tiling $\mathcal{T}$} we mean a tiling of $P_0$ into rhombi (\emph{tiles}) with side length $1$. We label the edges on the left boundary of $P_0$ consecutively starting with the lowest vertex $v_0$ of $P_0$ and proceeding clockwise. This induces a labeling of all edges in $\mathcal{T}$ such that parallel edges have the same label.

\begin{ex}\label{tilingex} We give an example for $n=5$. Since all other edge labels are determined by the labels of the edges lying on the left boundary of $P_0$ we omit them.
$$
\begin{tikzpicture}
\draw (0.0000000000,1.0000000000)  -- (-0.5877852522,1.8090169943)  -- (0.3632712640,2.1180339887) --
(0.9510565162,1.3090169943)-- (0.0000000000,1.0000000000); %25

\draw (0.0000000000,1.0000000000)  -- (-0.9510565162,1.3090169943) -- (-1.5388417685,2.1180339887) --
(-0.5877852522,1.8090169943)-- (0.0000000000,1.0000000000); %12

{\draw (0.0000000000,1.0000000000)  -- (-0.9510565162,1.3090169943) -- (-1.5388417685,2.1180339887) --
(-0.5877852522,1.8090169943)-- (0.0000000000,1.0000000000);}

{\draw (0.0000000000,1.0000000000)  -- (-0.5877852522,1.8090169943)  -- (0.3632712640,2.1180339887) --
(0.9510565162,1.3090169943)-- (0.0000000000,1.0000000000);}

{\draw (0.9510565162,1.3090169943)  -- (0.3632712640,2.1180339887) -- (0.9510565162,2.9270509831) --
(1.5388417685,2.1180339887)-- (0.9510565162,1.3090169943);}

\draw (-0.9510565162,0.3090169943)  -- node[font=\scriptsize, below left] {2}(-1.5388417685,1.1180339887)  -- node[font=\scriptsize, left] {3} (-1.5388417685,2.1180339887) --
(-0.9510565162,1.3090169943)-- (-0.9510565162,0.3090169943); %23

{\draw (-0.9510565162,0.3090169943)  -- (-1.5388417685,1.1180339887)  -- (-1.5388417685,2.1180339887) --
(-0.9510565162,1.3090169943)-- (-0.9510565162,0.3090169943);}

\draw (-0.5877852522,1.8090169943)  -- (0.0000000000,2.6180339887) -- (0.9510565162,2.9270509831) --
(0.3632712640,2.1180339887)-- (-0.5877852522,1.8090169943); %45

\draw (0.0000000000,0.0000000000) --  node[font=\scriptsize, below] {1} (-0.9510565162,0.3090169943) -- (-0.9510565162,1.3090169943) --
(0.0000000000,1.0000000000)-- (0.0000000000,0.0000000000) node[font=\scriptsize, below] {$v_0$} ; %13

\draw (0.0000000000,2.6180339887)  -- (-0.9510565162,2.9270509831) -- node[font=\scriptsize, above] {5}  (0.0000000000,3.2360679774) --
(0.9510565162,2.9270509831)-- (0.0000000000,2.6180339887); %15

\draw (0.9510565162,1.3090169943)  -- (0.3632712640,2.1180339887) -- (0.9510565162,2.9270509831) --
(1.5388417685,2.1180339887)-- (0.9510565162,1.3090169943); %24

\draw (0.9510565162,0.3090169943)  -- (0.9510565162,1.3090169943) -- (1.5388417685,2.1180339887) --
(1.5388417685,1.1180339887)-- (0.9510565162,0.3090169943); %34

\draw (-0.5877852522,1.8090169943)  -- (-1.5388417685,2.1180339887)  -- node[font=\scriptsize, above left] {4}  (-0.9510565162,2.9270509831) --
(0.0000000000,2.6180339887)-- (-0.5877852522,1.8090169943); %14

\draw (0.0000000000,0.0000000000)  -- (0.0000000000,1.0000000000) -- (0.9510565162,1.3090169943) --
(0.9510565162,0.3090169943)-- (0.0000000000,0.0000000000); %35

\end{tikzpicture}$$
\end{ex}
We call a connected set of edges in $\mathcal{T}$ a \emph{border} if it contains exactly one edge with each label.
 Fixing $s\in [2n]$ we define a partition
\begin{equation}\label{eq:kappa}
\bigsqcup_{\kappa} \mathcal{T}_{\kappa}=\mathcal{T}
\end{equation}
of the tiles in $\mathcal{T}$ as follows. Let $b_1, \ldots, b_{2n}$ be the edges of the boundary of $P_0$ in clockwise order starting from $v_0$. Let $B_1$ be the border consisting of the edges $b_{n+s+1}, \ldots, b_{2n+s}$ where we read the indices modulo $2n$. Let $\mathcal{T}_1$ be the set of all tiles in $\mathcal{T}$ intersecting $B_1$ in two edges (a tile of this form always exists by \cite[Lemma 2.1]{El}). We change the border $B_1$ into the new border $B_2$ by replacing for every tile $T\in \mathcal{T}_1$ the two edges intersecting $B_1$ by the other two edges of $T$. We define $\mathcal{T}_2$ as the set of all tiles in $\mathcal{T}\setminus \mathcal{T}_1$ intersection $B_2$ with two edges. We proceed inductively until $B_k$ consists only of edges lying on the boundary of $P_0$.

\begin{ex} We continue with Example \ref{tilingex} and denote by $1\le t < u \leq n$ the unique tile $T$ by $[t,u]$ which has edges with labels in the set $\{t,u \}$. The partition of the tiles with respect to $s=5$ is
$$\mathcal{P}_1=\{[2,3]\}, \quad \mathcal{P}_2=\{[1,3]\}, \quad \mathcal{P}_3=\{[1,2]\}, \quad \mathcal{P}_4=\{[1,4]\}, \quad \mathcal{P}_5=\{[1,5]\}, $$
$$\mathcal{P}_6=\{[4,5]\}, \quad \mathcal{P}_7=\{[2,5]\}, \quad \mathcal{P}_8=\{[3,5],[2,4]\}, \quad  \mathcal{P}_{9}=\{[3,4]\}.$$
The partition of the tiles with respect to $s=3$ is
$$\mathcal{P}_1=\{[2,3]\}, \quad \mathcal{P}_2=\{[1,3]\}, \quad \mathcal{P}_3=\{[1,2],[3,5]\}, \quad \mathcal{P}_4=\{[2,5], [3,4]\}$$
$$\mathcal{P}_5=\{[2,4]\}, \quad \mathcal{P}_6=\{[4,5]\}, \quad \mathcal{P}_7=\{[1,4]\}, \quad \mathcal{P}_8=\{[1,5]\}.$$
\end{ex}
\begin{defi} A sequence $\gamma=(\gamma_i)_{1\leq i\leq m}$ of tiles in $\mathcal{T}$ is called \emph{neighbour sequence} if for all $i$ with $1\leq i < m-1$ the tiles
$\gamma_{i}$ and $\gamma_{i+1}$ share an edge. For $s\in [n]$ we define the \emph{$s$-strip $\li^s$} to be the neighbour sequence $\gamma=(\gamma_i)_{1\leq i\leq n-1}$ consisting of all tiles with edges labeled by $s$ oriented by requiring that $\gamma_1$ intersects the left boundary of $P_0$.
\end{defi}

\begin{ex} We continue with Example \ref{tilingex} and depict the $2$-strip $\li^2$.
$$
\begin{tikzpicture}
\draw (0.0000000000,1.0000000000)  -- (-0.5877852522,1.8090169943)  -- (0.3632712640,2.1180339887) --
(0.9510565162,1.3090169943)-- (0.0000000000,1.0000000000); %25

\draw (0.0000000000,1.0000000000)  -- (-0.9510565162,1.3090169943) -- (-1.5388417685,2.1180339887) --
(-0.5877852522,1.8090169943)-- (0.0000000000,1.0000000000); %12

{\draw [pattern color=red, pattern=horizontal lines] (0.0000000000,1.0000000000)  -- (-0.9510565162,1.3090169943) -- (-1.5388417685,2.1180339887) --
(-0.5877852522,1.8090169943)-- (0.0000000000,1.0000000000);}

{\draw [pattern color=red, pattern=horizontal lines] (0.0000000000,1.0000000000)  -- (-0.5877852522,1.8090169943)  -- (0.3632712640,2.1180339887) --
(0.9510565162,1.3090169943)-- (0.0000000000,1.0000000000);}

{\draw [pattern color=red, pattern=horizontal lines] (0.9510565162,1.3090169943)  -- (0.3632712640,2.1180339887) -- (0.9510565162,2.9270509831) --
(1.5388417685,2.1180339887)-- (0.9510565162,1.3090169943);}

\draw (-0.9510565162,0.3090169943)  -- node[font=\scriptsize, below left] {\color{red} 2}(-1.5388417685,1.1180339887)  -- node[font=\scriptsize, left] {3} (-1.5388417685,2.1180339887) --
(-0.9510565162,1.3090169943)-- (-0.9510565162,0.3090169943); %23

{\draw [pattern color=red, pattern=horizontal lines] (-0.9510565162,0.3090169943)  -- (-1.5388417685,1.1180339887)  -- (-1.5388417685,2.1180339887) --
(-0.9510565162,1.3090169943)-- (-0.9510565162,0.3090169943);}

\draw (-0.5877852522,1.8090169943)  -- (0.0000000000,2.6180339887) -- (0.9510565162,2.9270509831) --
(0.3632712640,2.1180339887)-- (-0.5877852522,1.8090169943); %45

\draw (0.0000000000,0.0000000000) node[below] {$\li^2$} --  node[font=\scriptsize, below] {1} (-0.9510565162,0.3090169943) -- (-0.9510565162,1.3090169943) --
(0.0000000000,1.0000000000)-- (0.0000000000,0.0000000000); %13

\draw (0.0000000000,2.6180339887)  -- (-0.9510565162,2.9270509831) -- node[font=\scriptsize, above] {5}  (0.0000000000,3.2360679774) --
(0.9510565162,2.9270509831)-- (0.0000000000,2.6180339887); %15

\draw (0.9510565162,1.3090169943)  -- (0.3632712640,2.1180339887) -- (0.9510565162,2.9270509831) --
(1.5388417685,2.1180339887)-- (0.9510565162,1.3090169943); %24

\draw (0.9510565162,0.3090169943)  -- (0.9510565162,1.3090169943) -- (1.5388417685,2.1180339887) --
(1.5388417685,1.1180339887)-- (0.9510565162,0.3090169943); %34

\draw (-0.5877852522,1.8090169943)  -- (-1.5388417685,2.1180339887)  -- node[font=\scriptsize, above left] {4}  (-0.9510565162,2.9270509831) --
(0.0000000000,2.6180339887)-- (-0.5877852522,1.8090169943); %14

\draw (0.0000000000,0.0000000000)  -- (0.0000000000,1.0000000000) -- (0.9510565162,1.3090169943) --
(0.9510565162,0.3090169943)-- (0.0000000000,0.0000000000); %35

\end{tikzpicture}$$
\end{ex}

\begin{defi}\label{Gidef}
	For $s\in [2n]$ a neighbour sequence $(\gamma_1, \dots, \gamma_m)\subset\mathcal{T}$ is called \emph{$s$-ascending} if for $1\leq i<m-1$
	$$
	\kappa\left(\gamma_i \right) < \kappa\left(\gamma_{i+1}\right),
	$$
where $\kappa (\gamma_i)$ is defined by $\gamma_i\in \mathcal{T}_{\kappa(\gamma_i)}$ for the partition $\mathcal{T}= \sqcup \mathcal{T}_{\kappa}$ defined in (\ref{eq:kappa}).
\end{defi}
\begin{defi}\label{s-order}
	For $s\in [2n]$ we define a partial ordering $\leq_s$ on $\mathcal{T}$ by setting for $T_1,T_2\in\mathcal{T}$
	$$
	T_1 \leq_s T_2 \Leftrightarrow \exists \text{ $s$-ascending neighbour sequence $\left(\gamma_i\right)_{1\leq i \leq m}$ with $T_1=\gamma_1, T_2=\gamma_m.$}
	$$
\end{defi}

Given two neighbour sequences $\gamma=(\gamma_i)_{1\leq i\leq m}$ and $\lambda=(\lambda_i)_{1\leq i\leq m'}$ we denote by $\gamma\circ\lambda$ the neighbour sequence given by 
\begin{align*}
\gamma \circ \lambda &= \left\{ \begin{array}{ll}
(\rho_i)_{1\leq i \leq m+m'-1} & \text{if $\gamma_1=\lambda_{m'},$} \\ \emptyset & \text{else,}\end{array}\right.\\
\rho_i &= \begin{cases} \lambda_i & \text{if $i< m'$}, \\ \gamma_{i-m'} & \text{if $i>m'$}. \end{cases}
\end{align*}
Setting
$$-\gamma  := \left(\gamma_{m-i+1}\right)_	{1\leq i \leq m}$$
we obtain the following characterization of $a$-ascending neighbour sequences.
\begin{rem}
An $a$-ascending neighbour sequence is composed of segments of $\sgn(a-s) \mathcal{L}^s$ with $s\in[n]$, i.e. the $a$-ascending neighbour sequences are the sequences in $\mathcal{T}$ of the form
$$
\sgn\left( a - s_1 \right) \left( \mathcal{L}^{s_1}_j \right)_{m_1 \leq j \leq m_1'} \circ \dots \circ  \sgn\left( a - s_M \right) \left( \mathcal{L}^{s_M}_j \right)_{m_M \leq j \leq m_M'}.
$$
\end{rem}

\subsection{Tilings associated to reduced words}\label{wordtiling}

For a vertex $v$ of $\mathcal{T}$ we consider a minimal path of edges connecting the lowest vertex $v_0$ and $v$. The set $S$ containing the labels of the edges of such a minimal path is independent of the path and we write $v=v_S$. We denote a tile $T$ with vertices $v_S$, $v_{S \cup \{t\}}$, $v_{S \cup \{t, u\}}$, $v_{S \cup \{u\}}$ by $T=[t,u ; S]$. For a fixed tiling $\mathcal{T}$ and $t,u \in[n]$ with $t\neq u$ there exists a unique tile $T=[t, u; S] = [t, u ; S] \in\mathcal{T}$ which we denote by $[t,u]_{\mathcal{T}}$ or by $[t,u]$.

\begin{ex} We continue with example \ref{tilingex} and depict the vertex $v_{\{2,3,4\}}$ of $\mathcal{T}$ and the tile $T=[4,5; \{2,3\}]$.
$$
\begin{tikzpicture}
\draw (0.0000000000,1.0000000000)  -- (-0.5877852522,1.8090169943)  --
(0.3632712640,2.1180339887) --
(0.9510565162,1.3090169943)-- (0.0000000000,1.0000000000); %25

\draw (0.0000000000,1.0000000000)  -- (-0.9510565162,1.3090169943) --
(-1.5388417685,2.1180339887) --
(-0.5877852522,1.8090169943)-- node[font=\scriptsize, left] {2} (0.0000000000,1.0000000000); %12

{\draw
(0.0000000000,1.0000000000)  -- (-0.9510565162,1.3090169943) --
(-1.5388417685,2.1180339887) --
(-0.5877852522,1.8090169943)-- (0.0000000000,1.0000000000);}

{\draw
(0.0000000000,1.0000000000)  -- (-0.5877852522,1.8090169943)  --
(0.3632712640,2.1180339887) --
(0.9510565162,1.3090169943)-- (0.0000000000,1.0000000000);}

{\draw
(0.9510565162,1.3090169943)  -- (0.3632712640,2.1180339887) --
(0.9510565162,2.9270509831) --
(1.5388417685,2.1180339887)-- (0.9510565162,1.3090169943);}

\draw (-0.9510565162,0.3090169943)  -- node[font=\scriptsize, below left]
{2}(-1.5388417685,1.1180339887)  -- node[font=\scriptsize,
left] {3} (-1.5388417685,2.1180339887) --
(-0.9510565162,1.3090169943)-- (-0.9510565162,0.3090169943); %23

{\draw
(-0.9510565162,0.3090169943)  -- (-1.5388417685,1.1180339887)  --
(-1.5388417685,2.1180339887) --
(-0.9510565162,1.3090169943)-- (-0.9510565162,0.3090169943);}

\draw (-0.5877852522,1.8090169943)  -- (0.0000000000,2.6180339887) --
(0.9510565162,2.9270509831) --
(0.3632712640,2.1180339887)-- (-0.5877852522,1.8090169943); %45

\draw (0.0000000000,0.0000000000)  --
node[font=\scriptsize, below] {1} (-0.9510565162,0.3090169943) --
(-0.9510565162,1.3090169943)  --
(0.0000000000,1.0000000000)-- node[font=\scriptsize, left ] {3} (0.0000000000,0.0000000000); %13

\draw (0.0000000000,2.6180339887)  -- (-0.9510565162,2.9270509831) --
node[font=\scriptsize, above] {5}  (0.0000000000,3.2360679774) --
(0.9510565162,2.9270509831)-- (0.0000000000,2.6180339887) node[font=\scriptsize, above] {$v_{\{2,3,4\}}$} node[font=\scriptsize, below]{$\quad T$}; %15

\draw (0.9510565162,1.3090169943)  -- (0.3632712640,2.1180339887) --
(0.9510565162,2.9270509831) --
(1.5388417685,2.1180339887)-- (0.9510565162,1.3090169943); %24

\draw (0.9510565162,0.3090169943)  -- (0.9510565162,1.3090169943) --
(1.5388417685,2.1180339887) --
(1.5388417685,1.1180339887)-- (0.9510565162,0.3090169943); %34

\draw (-0.5877852522,1.8090169943)  -- (-1.5388417685,2.1180339887)  --
node[font=\scriptsize, above left] {4}  (-0.9510565162,2.9270509831) --
(0.0000000000,2.6180339887)-- node[font=\scriptsize, left]{$4$} (-0.5877852522,1.8090169943); %14

\draw (0.0000000000,0.0000000000)  -- (0.0000000000,1.0000000000) --
(0.9510565162,1.3090169943) --
(0.9510565162,0.3090169943)-- (0.0000000000,0.0000000000); %35

\end{tikzpicture}
$$
\end{ex}

Using \cite[Theorem 2.2]{El} we associate to a reduced decomposition $\ii$ (see Section \ref{notation}) a tiling $\mathcal{T}_{\ii}$ as follows. 
We define $\mathcal{T}_{\ii}$ to be the unique tiling such that the total ordering $\le_{\ii}$ (see \eqref{iorder}) on $\mathcal{T}_{\ii}$ is a refinement of $\leq_n$ (see Definition \ref{s-order}) under the identification
\begin{equation}\label{eq:roots}
\left[s, t \right]_{\mathcal{T}_{\ii}} \mapsto \alpha_{s, t-1} \quad  (s<t)
\end{equation}
of $\mathcal{T}_{\ii}$ with  the positive roots $\Phi^+$.

Assume that $\ii=(i_1, \ldots,i_N)$, $\jj =(j_1, \ldots,j_N)$ are two reduced words such their corresponding reduced expressions differ by a braid relation $\sigma_{i_k}\sigma_{i_{k+1}}\sigma_{i_{k+2}}=\sigma_{j_{k}}\sigma_{j_{k+1}}\sigma_{j_{k+2}}$ as in \eqref{braidrel}. Then $\mathcal{T}_{\ii}$ and $\mathcal{T}_{\jj}$ are obtained from each other by interchanging the two subtilings 
\begin{equation}\label{hexbraid}
\left\{\left[s,t;S\right], \left[s,u;S\cup\{t\}\right], \left[t,u;S\right]\right\}  \mathbf{\leftrightarrow}  \left\{\left[t,u;S\cup \{s\}\right], \left[s,u;S\right], \left[s,t; S\cup \{u\}\right]\right\}
\end{equation}
 of a regular 6-gon $[s,t;S] \cup [s,u;S\cup\{t\}] \cup [t,u;S]=[t,u;S\cup \{s\}]\cup [s,u;S]\cup [s,t; S\cup \{u\}]$ with $s<t<u$. This can be illustrated as follows.
$$\begin{tikzpicture}
\draw (0.0000000000,0.0000000000)  -- node[left]{$s$} (-0.8660254037,0.4999999999) -- node[left]{$t$} (-0.8660254037,1.5000000000) --
(0.0000000000,1.0000000000)-- (0.0000000000,0.0000000000);
\draw (0.0000000000,1.0000000000)  -- (-0.8660254037,1.5000000000) -- node[left]{$u$} (0.0000000000,2.0000000000)  --
(0.8660254037,1.5000000000)-- (0.0000000000,1.0000000000);
\draw (0.0000000000,0.0000000000)  -- (0.0000000000,1.0000000000) -- (0.8660254037,1.5000000000) node[font=\scriptsize, right, shift={(-0.1,-0.5)}] {$\qquad \mathbf{\leftrightarrow}\qquad $} --
(0.8660254037,0.4999999999)-- (0.0000000000,0.0000000000);

\begin{scope}[xshift=3.2cm]
\draw (0.0000000000,0.0000000000)  -- node[left]{$s$} (-0.8660254037,0.4999999999) -- (0.0000000000,0.9999999999) --
(0.8660254037,0.4999999999)-- (0.0000000000,0.0000000000);
\draw (0.8660254037,0.4999999999)  -- (0.0000000000,0.9999999999) -- (0.0000000000,1.9999999999) --
(0.8660254037,1.5000000000)-- (0.8660254037,0.4999999999);
\draw (-0.8660254037,0.4999999999)  -- node[left]{$t$} (-0.8660254037,1.5000000000) -- node[left]{$u$} (0.0000000000,2.0000000000) --
(0.0000000000,0.9999999999)-- (-0.8660254037,0.4999999999);
\end{scope}
\end{tikzpicture}$$
We call a tiling $\mathcal{H}$ of the regular $6$-gon a \emph{hexagon}. We call the replacement of $\mathcal{H}$ in $\mathcal{T}$ with the unique different tiling of the $6$-gon a \emph{flip of $\mathcal{T}$ at $\mathcal{H}$}.
\begin{ex} We depicture a flip of the tiling $\mathcal{T}_{(1,2,3,1,2,1)}$ at the hexagon given by the tiles $[2,3],[2,4],[3,4]$ yielding the tiling $\mathcal{T}_{(1,2,3,2,1,2)}$.
$$\begin{tikzpicture}
\draw[pattern color=blue, pattern=horizontal lines] (0.3826834323,0.9238795325)  -- (0.0000000000,1.8477590650) -- (0.9238795325,2.2304424973) --
(1.3065629648,1.3065629648)-- (0.3826834323,0.9238795325);
\draw (0.0000000000,0.0000000000)  -- node[font=\scriptsize, below left]
{1} (-0.9238795325,0.3826834323) -- node[font=\scriptsize, below left]
{2} (-1.3065629648,1.3065629648) --
(-0.3826834323,0.9238795325)-- (0.0000000000,0.0000000000);
\draw (-0.3826834323,0.9238795325)  --  (-1.3065629648,1.3065629648) -- node[font=\scriptsize, above left]
{3} (-0.9238795325,2.2304424973) --
(0.0000000000,1.8477590650)-- (-0.3826834323,0.9238795325);
\draw[pattern color=blue, pattern=horizontal lines] (0.0000000000,0.0000000000)  -- (0.3826834323,0.9238795325) -- (1.3065629648,1.3065629648) --
(0.9238795325,0.3826834323)-- (0.0000000000,0.0000000000);
\draw[pattern color=blue, pattern=horizontal lines] (0.0000000000,0.0000000000)  -- (-0.3826834323,0.9238795325) -- (0.0000000000,1.8477590650) --
(0.3826834323,0.9238795325)-- (0.0000000000,0.0000000000);
\draw (0.0000000000,1.8477590650)  -- (-0.9238795325,2.2304424973) -- node[font=\scriptsize, above left]
{4} (0.0000000000,2.6131259297) --
(0.9238795325,2.2304424973)-- (0.0000000000,1.8477590650);

\begin{scope}[shift={(5,0)}]
\draw[pattern color=blue, pattern=horizontal lines] (0.9238795325,0.3826834323)  -- (0.5411961001,1.3065629648) -- (0.9238795325,2.2304424973) --
(1.3065629648,1.3065629648)-- (0.9238795325,0.3826834323);
\draw (0.0000000000,0.0000000000)  -- node[font=\scriptsize, below left]
{1} (-0.9238795325,0.3826834323) -- node[font=\scriptsize, below left]
{2} (-1.3065629648,1.3065629648) --
(-0.3826834323,0.9238795325)-- (0.0000000000,0.0000000000);
\draw (-0.3826834323,0.9238795325)  -- (-1.3065629648,1.3065629648) -- node[font=\scriptsize, above left]
{3} (-0.9238795325,2.2304424973) --
(0.0000000000,1.8477590650)-- (-0.3826834323,0.9238795325);
\draw[pattern color=blue, pattern=horizontal lines] (0.0000000000,0.0000000000)  -- (-0.3826834323,0.9238795325) -- (0.5411961001,1.3065629648) --
(0.9238795325,0.3826834323)-- (0.0000000000,0.0000000000);
\draw[pattern color=blue, pattern=horizontal lines] (-0.3826834323,0.9238795325)  -- (0.0000000000,1.8477590650) -- (0.9238795325,2.2304424973) --
(0.5411961001,1.3065629648)-- (-0.3826834323,0.9238795325);
\draw (0.0000000000,1.8477590650)  -- (-0.9238795325,2.2304424973) -- node[font=\scriptsize, above left]
{4} (0.0000000000,2.6131259297) --
(0.9238795325,2.2304424973)-- (0.0000000000,1.8477590650);
\end{scope}
\end{tikzpicture}$$
\end{ex}

\subsection{Lusztig's parametrization of the canonical basis}\label{sec:crystal}
Using braid group operators Lusztig defined in \cite{Lu} for each reduced word $\ii=(i_1, \ldots,i_N)$ a PBW-type basis of $U_q^-$
$$B_{\ii}=\left\{F_{\ii,\beta_1}^{(x_{\beta_1})}  \cdots F_{\ii,\beta_N}^{(x_{\beta_N})} \mid (x_{\beta_1}, \ldots, x_{\beta_N})\in \mathbb{N}^N\right\},$$
where $\{\beta_1, \ldots,\beta_N\}=\Phi^+$,
$\beta_1 \le_{\ii} \ldots \le_{\ii} \beta_N$, $F_{\ii, \beta_j}=T_{i_1}\cdots T_{i_{j-1}}F_j$ is given via the braid group action $T_i$ defined in \cite[Section1.3]{Lu2}, $X^{(m)}$ is the $q$-divided power defined by $X^{(m)}:=\frac{X^m}{[2]\cdots [m]}$ and $[m]:=q^{m-1}+\ldots +q^{-m+1}$.

\begin{defi}For $x=(x_{1}, \ldots, x_{N})\in \mathbb{N}^N$, we denote the element $F_{\ii,\beta_1}^{(x_{1})} \cdots F_{\ii,\beta_N}^{(x_{N})}$ by $F^{x}$ and call $x$ its \emph{$\ii$-Lusztig datum}. Using the identification of the positive roots with tiles in $\mathcal{T}_{\ii}$ as in (\ref{eq:roots}), we write $x=(x_T)\in \mathbb{N}^{\mathcal{T}_{\ii}}$.
\end{defi}

The sets of $\ii-$ and $\jj-$Lusztig data are related via piecewise linear bijections $$\RR_{\jj}^{\ii}:\mathbb{N}^{\mathcal{T}_{\ii}}\rightarrow \mathbb{N}^{\mathcal{T}_{\jj}}$$ as follows. 
If {$\jj$} is obtained from {$\ii$} by a commutation move replacing the subword $(i_k,i_{k+1})$ by $(i_{k+1},i_k)$ with $\sigma_{i_k} \sigma_{i_{k+1}} = \sigma_{i_k+1} \sigma_{i_{k}}$ we set
\begin{equation}\label{tlmutc}
(\RR_{\jj}^{\ii} x)_{[s,t]}=x_{[s,t]}.
\end{equation}
If {$\jj$} is obtained from {$\ii$} by a braid move that corresponds to a flip at $\mathcal{H}:=\{[s,t], [s,u], [t,u]\} \subset \mathcal{T}_{\ii}$ with $s<t<u$, we set $y=\RR_{\jj}^{\ii} {x}$, where $y_T=x_T$ for $T\notin \{[s,t], [s,u], [t,u]\}$ and  	
\begin{align} \label{tlmutb}
\begin{split}
y_{[s,t]}&=x_{[s,t]}+x_{[s,u]}-\min(x_{[s,t]},x_{[t,u]}),\\ 
y_{[s,u]}&=\min(x_{[s,t]},x_{[t,u]}),\\ 
y_{[t,u]}&=x_{[t,u]}+x_{[s,u]}-\min(x_{[s,t]}, x_{[t,u]}).
\end{split}\end{align}

Any reduced word $\jj$ can be obtained from any other reduced word $\ii$ by a sequence of commutation and braid moves. One checks by direct computation, that the compositions of maps of type $\eqref{tlmutc}$ and $\eqref{tlmutb}$ respect the relations of the symmetric group.  We thus obtain a definition of $\RR^{\ii}_{\jj}$ for $\ii$ and $\jj$ arbitrary.

The $\mathbb{Z}[q]$-lattice $\mathfrak{L}$ spanned by $B_{{\bf i}}$ is independent of the choice of reduced expression $\ii$, as is the induced basis $B:=\pi (B_{\ii})$ of $\mathfrak{L}/q\mathfrak{L}$, where $\pi: \mathfrak{L}\rightarrow \mathfrak{L}/ q\mathfrak{L}$ is the canonical projection. There exists a unique basis ${\bf B}$ of $\mathfrak{L}$ whose image under $\pi$ is $B$ and which is stable under the $\mathbb{Q}$-algebra automorphism preserving the generators of $U_q^-$ and sending $q$ to $q^{-1}$.
The resulting parametrizations of ${\bf B}$ by $\mathbb{N}^{\mathcal{T}_{\ii}}$ are intertwined by the maps $\RR^{\ii}_{\jj}$ and $\mathbb{N}^{\mathcal{T}_{\ii}}$ has a crystal structure isomorphic to $B(\infty)$ (see \cite{L93,BZ2}). It is given as follows.
\begin{defi}\label{def:crystal} Let $a\in [n-1]$ and let ${\ii}=(i_1, \ldots,i_{N})$. We define 
maps
\begin{align*}
f_a &: \N^{\mathcal{T}_{\ii}} \rightarrow \N^{\mathcal{T}_{\ii}},\\
e_a &: \N^{\mathcal{T}_{\ii}}\rightarrow \N^{\mathcal{T}_{\ii}}\sqcup \{0\},\\
\varepsilon_a  &: \N^{\mathcal{T}_{\ii}}\rightarrow \N
\end{align*}
as follows. If $a=i_1$ we set for $x\in \N^{\mathcal{T}_{\ii}}$ and $T\in\mathcal{T}_{\ii}$
\begin{align*}
({f}_a{x})_T&=\begin{cases}
 x_T+1 & \text{ if }T=[a,a+1] \\
 x_T & \text{ else,}
 \end{cases} \\
 ({e}_a{x})_T&=\begin{cases}
0 & \text{if $x_{[a,a+1]}=0$,}\\
 x_T-1 & \text{ if $T=[a,a+1]$ and $x_{[a,a+1]} > 0$,} \\
 x_T & \text{ else,}
 \end{cases}\\
\varepsilon_a (x)&= x_{[a,a+1]}.
\end{align*}
Generally, we define  $f_a, e_a$ and $\varepsilon_a$ by requiring for all reduced words $\ii$ and $\jj$
\begin{align*}
{f}_a &= \RR^{\jj}_{\ii} \circ f_a \circ \RR^{\ii}_{\jj},\\
e_a  &=\RR^{\jj}_{\ii} \circ e_a \circ  \RR^{\ii}_{\jj},\\
\varepsilon_a &= \varepsilon_a \circ \RR^{\ii}_{\jj } .
\end{align*}
To ease notation we suppress the dependency of $f_a$, $e_a$ and $\varepsilon_a$ on $\ii$.
\end{defi}

\subsection{Kashiwara involution on Lusztig's parametrization}\label{Kashiwarainvolution}

Let $*$ be the anti-auto\-morphism of $U_q^-$ preserving the generators $F_a$. By \cite[Theorem 8.1]{Ka}, we have $B(\infty)^*=B(\infty)$ as sets. Therefore this anti-automorphism yields an involution on the crystal basis called \emph{Kashiwara involution}.

For $x\in \mathbb{N}^{\mathcal{T}_{\ii}}$ and $a\in [n-1]$ we define the operators $f_a^*x=(f_a x^* )^*$, $e_a^*x=\left(e_a x^*\right)^*$ and the function $\varepsilon_a^*(x)=\varepsilon_a(x^*)$. This equips the set $\mathbb{N}^{\mathcal{T}_{\ii}}$ with a second crystal structure isomorphic to $B(\infty)$. Explicitly, the $*$-crystal structure is given as follows (see e.g. \cite[Proposition 3.3 (iii)]{BZ2}).
\begin{defi}\label{starop} Let $a\in [n-1]$ and ${\ii}=(i_1, \ldots,i_{N})$. If $a=i_N$ then  we set for $x\in \N^{\mathcal{T}_{\ii}}$ and $T\in\mathcal{T}_{\ii}$ %for $x \in \mathbb{N}^{\mathcal{T}_{\ii}}$, $T\in \mathcal{T}_{\ii}$
\begin{align*}
({f}^*_a{x})_T&=\begin{cases}
 x_T+1 & \text{ if }T=[a,a+1] \\
 x_T & \text{ else,}
 \end{cases} \\
 ({e}^*_a{x})_T&=\begin{cases}
0 & \text{if $x_{[a,a+1]}=0$,}\\
 x_T-1 & \text{ if $T=[a,a+1]$ and $x_{[a,a+1]} > 0$,} \\
 x_T & \text{ else,}
 \end{cases}\\
\varepsilon_a^* (x)&= x_{[a,a+1]}.
\end{align*}
Furthermore, for two reduced words $\ii$ and $\jj$ we require
\begin{align*}
{f}_a^*  &= \RR^{\jj}_{\ii} \circ f_a^* \circ \RR^{\ii}_{\jj}  \\
e_a^* &=\RR^{\jj}_{\ii} \circ e_a^* \circ \RR^{\ii}_{\jj}   \\
\varepsilon_a^*  &= \varepsilon_a^* \circ \RR^{\ii}_{\jj }.
\end{align*}
\end{defi}

\section{Crossing Formula}
In this section we derive a "Crossing Formula" for the crystal structure ($f_a$, $e_a$, $\varepsilon_a$) on Lusztig data $\mathbb{N}^{\mathcal{T}}$.
For this we introduce in \ref{orderlatticesec} a certain poset of crossings related to the Kashiwara operators. In Section \ref{mutseqsection} we provide preparatory results needed in the proof of the formula. In Section \ref{formula} we state and prove the Crossing Formula.
In Section \ref{dualcrossing1} we prove a dual Crossing Formula for the crystal structure  ($f_a^*$, $e_a^*$, $\varepsilon_a^*$) on Lusztig data and describe the explicit embedding of the highest weight crystal $B(\lambda)$ into $B(\infty)$ in terms of Lusztig data. 

\subsection{Poset of a-crossings}\label{orderlatticesec}
Let $a\in [n-1]$ and $\mathcal{T}$ be a tiling of the regular $2n$-gon.
With the notation of Definition \ref{Gidef} the main objects encoding the operation of the Kashiwara operators are  \emph{$a$-crossings} which are defined to be $a$-ascending neighbour sequences $(\gamma_i)_{1\leq i \leq m}\subset \mathcal{T}$ starting at $\gamma_1=\li^a_1$ and ending at $\gamma_m=\li^{a+1}_1$. Any such neighbour sequence is uniquely determined by its associated \emph{strip sequence} $(s_1, \dots, s_M)$, which is defined by
\begin{equation}\label{ssdef}
\gamma=\gamma^1 \circ \dots \circ \gamma^M, \quad \gamma^i \subset \li^{s_i},  \quad s_i\neq s_{i+1},\,  s_1=a,\, s_M=a+1.
\end{equation}
We therefore use the notations $(\gamma_i)\subset\mathcal{T}$ and $(s_i)\subset [n]$ interchangeably to denote an $a$-crossing. We denote the set of $a$-crossings by $\Gamma_a$.

\begin{ex} We continue with Example \ref{tilingex} and depict the $3$-crossing $$\gamma=([2,3],[1,3],[1,2],[2,5],[2,4],[4,5],[1,4]),$$ which is given by the tiles surrounding the line in the picture below. The associated strip sequence is $\gamma=(3,1,2,4)$.

$$\begin{tikzpicture}
\draw (0.0000000000,1.0000000000)  -- (-0.5877852522,1.8090169943)  -- (0.3632712640,2.1180339887) --
(0.9510565162,1.3090169943)-- (0.0000000000,1.0000000000); %25

\draw (0.0000000000,1.0000000000)  -- (-0.9510565162,1.3090169943) -- (-1.5388417685,2.1180339887) --
(-0.5877852522,1.8090169943)-- (0.0000000000,1.0000000000); %12

\draw (-0.9510565162,0.3090169943)  -- node[font=\scriptsize, below left] {2} (-1.5388417685,1.1180339887)  -- node[font=\scriptsize, left] {3} (-1.5388417685,2.1180339887) --
(-0.9510565162,1.3090169943)-- (-0.9510565162,0.3090169943); %23

\draw (-0.5877852522,1.8090169943)  -- (0.0000000000,2.6180339887) -- (0.9510565162,2.9270509831) --
(0.3632712640,2.1180339887)-- (-0.5877852522,1.8090169943); %45

\draw (0.0000000000,0.0000000000)  --  node[font=\scriptsize, below] {1} (-0.9510565162,0.3090169943) -- (-0.9510565162,1.3090169943) --
(0.0000000000,1.0000000000)-- (0.0000000000,0.0000000000); %13

\draw (0.0000000000,2.6180339887)  -- (-0.9510565162,2.9270509831) -- node[font=\scriptsize, above] {5}  (0.0000000000,3.2360679774) --
(0.9510565162,2.9270509831)-- (0.0000000000,2.6180339887); %15

\draw (0.9510565162,1.3090169943)  -- (0.3632712640,2.1180339887) -- (0.9510565162,2.9270509831) --
(1.5388417685,2.1180339887)-- (0.9510565162,1.3090169943); %24

\draw (0.9510565162,0.3090169943)  -- (0.9510565162,1.3090169943) -- (1.5388417685,2.1180339887) --
(1.5388417685,1.1180339887)-- (0.9510565162,0.3090169943); %34

\draw (-0.5877852522,1.8090169943)  -- (-1.5388417685,2.1180339887)  -- node[font=\scriptsize, above left] {4}  (-0.9510565162,2.9270509831) --
(0.0000000000,2.6180339887)-- (-0.5877852522,1.8090169943); %14

\draw (0.0000000000,0.0000000000)  -- (0.0000000000,1.0000000000) -- (0.9510565162,1.3090169943) --
(0.9510565162,0.3090169943)-- (0.0000000000,0.0000000000); %35

\draw [thick,blue] (-1.2449491424,1.2135254915) -- (-0.4755282581,0.6545084971) --
(-0.7694208842,1.5590169943)-- (0.1816356320,1.5590169943)--
(0.9510565162,2.1180339887)--
(0.1816356320,2.3680339887) -- (-0.7694208842,2.3680339887);

\end{tikzpicture}$$
\end{ex}

We introduce a relation $\preceq$ on the set $\Gamma_a$ of $a$-crossings as follows. Since $\leq_a$ defined in Definition \ref{s-order} is anti-symmetric, $\gamma\in \Gamma_a$ cannot contain cycles. Consequently, the set $\mathcal{T}\setminus \{T \mid T \in \gamma\}$ is partitioned into the set of tiles lying on the left of $\gamma$ and the set of tiles lying on the right of $\gamma$. We denote the set consisting of those $T\in\mathcal{T}$ which do not lie right of $\gamma$ by $\cl{\gamma}$ and define for $\gamma, \lambda \in \Gamma_a$
\begin{align}\notag
\op{\gamma}&:=\cl{\gamma}-\gamma,\\\label{orderdef}
\gamma \preceq \lambda &:\Leftrightarrow
\left(\cl{\gamma}\subseteq \cl{\lambda}\right) \wedge
\left(\op{\gamma} \subseteq \op{\lambda}\right).
\end{align}
We show that $\preceq$ turns $\Gamma_a$ into a poset:
\begin{prop}\label{isorder}
 The relation $\preceq$ on $\Gamma_a$ is an order relation.
\end{prop}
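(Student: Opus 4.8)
The plan is to verify the three defining properties of an order relation on $\Gamma_a$ — reflexivity, transitivity and antisymmetry — of which only the last carries any content. Reflexivity is trivial, since $\cl{\gamma}\subseteq\cl{\gamma}$ and $\op{\gamma}\subseteq\op{\gamma}$ hold for every $\gamma\in\Gamma_a$. Transitivity is immediate from the transitivity of set inclusion: if $\gamma\preceq\lambda$ and $\lambda\preceq\rho$, then $\cl{\gamma}\subseteq\cl{\lambda}\subseteq\cl{\rho}$ and $\op{\gamma}\subseteq\op{\lambda}\subseteq\op{\rho}$, so $\gamma\preceq\rho$ by the definition \eqref{orderdef}. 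Thus the whole statement reduces to establishing antisymmetry.

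For antisymmetry I would start from the assumption that $\gamma\preceq\lambda$ and $\lambda\preceq\gamma$ simultaneously. Unravelling \eqref{orderdef}, the two inclusions in each direction force the equalities of tile sets $\cl{\gamma}=\cl{\lambda}$ and $\op{\gamma}=\op{\lambda}$. Since $\op{\gamma}=\cl{\gamma}\setminus\{T\mid T\in\gamma\}$ by definition, and $\op{\gamma}\subseteq\cl{\gamma}$, taking set differences of these two equalities shows that $\gamma$ and $\lambda$ share the same underlying set of tiles:
$$\{T\mid T\in\gamma\}=\cl{\gamma}\setminus\op{\gamma}=\cl{\lambda}\setminus\op{\lambda}=\{T\mid T\in\lambda\}.$$
It then remains to upgrade this equality of tile sets to an equality of $\gamma$ and $\lambda$ as neighbour sequences, i.e. to argue that an $a$-crossing is rigidly determined by its underlying set of tiles.

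This reconstruction is the only substantial step, and it is where I expect the main work to lie. Here I would invoke the $a$-ascending property of Definition \ref{Gidef}: along any $a$-crossing one has $\kappa(\gamma_1)<\kappa(\gamma_2)<\dots<\kappa(\gamma_{m-1})$ strictly, so the tiles $\gamma_1,\dots,\gamma_{m-1}$ are pairwise distinct and occur in the unique order of increasing $\kappa$-value — this strict monotonicity is exactly the absence of cycles guaranteed by the anti-symmetry of $\leq_a$ that was already used to make $\cl{\gamma}$ well defined. Since moreover the endpoints are pinned to $\gamma_1=\li^a_1$ and $\gamma_m=\li^{a+1}_1$, the entire sequence is recovered from its tile set $S$ by placing $\li^{a+1}_1$ last and arranging the remaining tiles of $S$ in order of increasing $\kappa$. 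Applying this reconstruction to the common tile set of $\gamma$ and $\lambda$ yields $\gamma=\lambda$, which is the desired antisymmetry. The delicate point to check carefully is precisely that the $a$-ascending condition pins down a \emph{total} order on the tiles of a crossing, so that two distinct neighbour sequences cannot share a tile set; once this rigidity is secured, reflexivity, antisymmetry and transitivity together show that $\preceq$ is an order relation on $\Gamma_a$.
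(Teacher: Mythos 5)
Your proposal is correct and follows essentially the same route as the paper: reflexivity and transitivity are immediate, and antisymmetry is obtained by deducing $\cl{\gamma}=\cl{\lambda}$ and $\op{\gamma}=\op{\lambda}$, hence equality of the underlying tile sets $\cl{\gamma}-\op{\gamma}=\cl{\lambda}-\op{\lambda}$, and then using the $a$-ascending property to upgrade this to equality of sequences. The paper states this last step in one line ("Since $\gamma$ and $\lambda$ are $a$-ascending neighbour sequences, $\gamma=\lambda$ also holds as an equality of sequences"), whereas you spell out the reconstruction via the strict monotonicity of $\kappa$; this is a faithful elaboration, not a different argument.
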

\begin{proof}
 Clearly, $\preceq$ is reflexive and transitive. For $\gamma, \lambda \in \Gamma_a$ we have
 $$
 \gamma \preceq \lambda \preceq \gamma \quad \Leftrightarrow\quad  (\cl{\gamma}=  \cl{\lambda}) \wedge   (\op{\gamma}  =  \op{\lambda})  \quad\Rightarrow\quad \gamma =  \cl{\gamma} - \op{\gamma}=  \cl{\lambda}-\op{\lambda}
	=\lambda,
 $$
 where the equalities are equalities of sets. Since $\gamma$ and $\lambda$ are $a$-ascending neighbour sequences, $\gamma=\lambda$ also holds as an equality of sequences. Thus, $\preceq$ is anti-symmetric.
 \end{proof}
We illustrate the poset  $\Gamma_a$ in an example.
\begin{ex}\label{ex:poset} We continue with our running example and depict the poset of $3$-crossings in $\mathcal{T}$ given in Example \ref{tilingex}. The crossings are given by the tiles surrounding the blue lines.
$$\scalebox{.45}{
   \begin{tikzpicture}
  \begin{scope}[shift={(0,5.25)}]
\draw [thick] (-1.7, 2.3617839887) -- (-2.3, 2.6242839887);
\draw [thick] (-1.7, 0.8742839887) -- (-2.3, 0.6117839887);
\draw (0.0000000000,1.0000000000)  -- (-0.5877852522,1.8090169943)  -- (0.3632712640,2.1180339887) --
(0.9510565162,1.3090169943)-- (0.0000000000,1.0000000000); %25
\draw (0.0000000000,1.0000000000)  -- (-0.9510565162,1.3090169943) -- (-1.5388417685,2.1180339887) --
(-0.5877852522,1.8090169943)-- (0.0000000000,1.0000000000); %12
\draw (-0.9510565162,0.3090169943)  -- node[font=\scriptsize, below left] {} (-1.5388417685,1.1180339887)  -- node[font=\scriptsize, left] {} (-1.5388417685,2.1180339887) --
(-0.9510565162,1.3090169943)-- (-0.9510565162,0.3090169943); %23
\draw (-0.5877852522,1.8090169943)  -- (0.0000000000,2.6180339887) -- (0.9510565162,2.9270509831) --
(0.3632712640,2.1180339887)-- (-0.5877852522,1.8090169943); %45
\draw (0.0000000000,0.0000000000)  --  node[font=\scriptsize, below] {} (-0.9510565162,0.3090169943) -- (-0.9510565162,1.3090169943) --
(0.0000000000,1.0000000000)-- (0.0000000000,0.0000000000); %13
\draw (0.0000000000,2.6180339887)  -- (-0.9510565162,2.9270509831) -- node[font=\scriptsize, above] {}  (0.0000000000,3.2360679774) --
(0.9510565162,2.9270509831)-- (0.0000000000,2.6180339887); %15
\draw (0.9510565162,1.3090169943)  -- (0.3632712640,2.1180339887) -- (0.9510565162,2.9270509831) --
(1.5388417685,2.1180339887)-- (0.9510565162,1.3090169943); %24
\draw (0.9510565162,0.3090169943)  -- (0.9510565162,1.3090169943) -- (1.5388417685,2.1180339887) --
(1.5388417685,1.1180339887)-- (0.9510565162,0.3090169943); %34
\draw (-0.5877852522,1.8090169943)  -- (-1.5388417685,2.1180339887)  -- node[font=\scriptsize, above left] {}  (-0.9510565162,2.9270509831) --
(0.0000000000,2.6180339887)-- (-0.5877852522,1.8090169943); %14
\draw (0.0000000000,0.0000000000)  -- (0.0000000000,1.0000000000) -- (0.9510565162,1.3090169943) --
(0.9510565162,0.3090169943)-- (0.0000000000,0.0000000000); %35
%\draw [thick,red,rounded corners=8pt] (-1.2449491424,1.2135254915) -- (-0.4755282581,0.6545084971) --
%(0.4755282581,0.6545084971) -- (1.2449491424,1.2135254915) -- (0.9510565162,2.1180339887) --
%(0.1816356320,2.3680339887) -- (-0.7694208842,2.3680339887);
{\draw [thick,blue] (-1.2449491424,1.2135254915) -- (-0.4755282581,0.6545084971) --
(-0.7694208842,1.5590169943)-- (0.1816356320,1.5590169943)--
(0.9510565162,2.1180339887)--
(0.1816356320,2.3680339887) -- (-0.7694208842,2.3680339887);}
  \end{scope}

\begin{scope}[shift={(-12,3.5)}]
\draw (0.0000000000,1.0000000000)  -- (-0.5877852522,1.8090169943)  -- (0.3632712640,2.1180339887) --
(0.9510565162,1.3090169943)-- (0.0000000000,1.0000000000); %25
\draw (0.0000000000,1.0000000000)  -- (-0.9510565162,1.3090169943) -- (-1.5388417685,2.1180339887) --
(-0.5877852522,1.8090169943)-- (0.0000000000,1.0000000000); %12
\draw (-0.9510565162,0.3090169943)  -- node[font=\scriptsize, below left] {} (-1.5388417685,1.1180339887)  -- node[font=\scriptsize, left] {} (-1.5388417685,2.1180339887) --
(-0.9510565162,1.3090169943)-- (-0.9510565162,0.3090169943); %23
\draw (-0.5877852522,1.8090169943)  -- (0.0000000000,2.6180339887) -- (0.9510565162,2.9270509831) --
(0.3632712640,2.1180339887)-- (-0.5877852522,1.8090169943); %45
\draw (0.0000000000,0.0000000000)  --  node[font=\scriptsize, below] {} (-0.9510565162,0.3090169943) -- (-0.9510565162,1.3090169943) --
(0.0000000000,1.0000000000)-- (0.0000000000,0.0000000000); %13
\draw (0.0000000000,2.6180339887)  -- (-0.9510565162,2.9270509831) -- node[font=\scriptsize, above] {}  (0.0000000000,3.2360679774) --
(0.9510565162,2.9270509831)-- (0.0000000000,2.6180339887); %15
\draw (0.9510565162,1.3090169943)  -- (0.3632712640,2.1180339887) -- (0.9510565162,2.9270509831) --
(1.5388417685,2.1180339887)-- (0.9510565162,1.3090169943); %24
\draw (0.9510565162,0.3090169943)  -- (0.9510565162,1.3090169943) -- (1.5388417685,2.1180339887) --
(1.5388417685,1.1180339887)-- (0.9510565162,0.3090169943); %34
\draw (-0.5877852522,1.8090169943)  -- (-1.5388417685,2.1180339887)  -- node[font=\scriptsize, above left] {}  (-0.9510565162,2.9270509831) --
(0.0000000000,2.6180339887)-- (-0.5877852522,1.8090169943); %14
\draw (0.0000000000,0.0000000000)  -- (0.0000000000,1.0000000000) -- (0.9510565162,1.3090169943) --
(0.9510565162,0.3090169943)-- (0.0000000000,0.0000000000); %35
%\draw [thick,red,rounded corners=8pt] (-1.2449491424,1.2135254915) -- (-0.4755282581,0.6545084971) --
%(0.4755282581,0.6545084971) -- (1.2449491424,1.2135254915) -- (0.9510565162,2.1180339887) --
%(0.1816356320,2.3680339887) -- (-0.7694208842,2.3680339887);
{\draw [thick,blue]
(-1.2449491424,1.2135254915) --
(-0.7694208842,1.5590169943) -- (-0.7694208842,2.3680339887);}
  \end{scope}

\begin{scope}[shift={(-8,5.25)}]
%\draw [thick] (1.8, 2.4055339887) -- (2.2, 2.5805339887);
%\draw [thick] (-1.7, 2.3617839887) -- (-2.3, 2.6242839887);
\draw [thick] (-1.7, 0.8742839887) -- (-2.3, 0.6117839887);
\draw (0.0000000000,1.0000000000)  -- (-0.5877852522,1.8090169943)  -- (0.3632712640,2.1180339887) --
(0.9510565162,1.3090169943)-- (0.0000000000,1.0000000000); %25
\draw (0.0000000000,1.0000000000)  -- (-0.9510565162,1.3090169943) -- (-1.5388417685,2.1180339887) --
(-0.5877852522,1.8090169943)-- (0.0000000000,1.0000000000); %12
\draw (-0.9510565162,0.3090169943)  -- node[font=\scriptsize, below left] {} (-1.5388417685,1.1180339887)  -- node[font=\scriptsize, left] {} (-1.5388417685,2.1180339887) --
(-0.9510565162,1.3090169943)-- (-0.9510565162,0.3090169943); %23
\draw (-0.5877852522,1.8090169943)  -- (0.0000000000,2.6180339887) -- (0.9510565162,2.9270509831) --
(0.3632712640,2.1180339887)-- (-0.5877852522,1.8090169943); %45
\draw (0.0000000000,0.0000000000)  --  node[font=\scriptsize, below] {} (-0.9510565162,0.3090169943) -- (-0.9510565162,1.3090169943) --
(0.0000000000,1.0000000000)-- (0.0000000000,0.0000000000); %13
\draw (0.0000000000,2.6180339887)  -- (-0.9510565162,2.9270509831) -- node[font=\scriptsize, above] {}  (0.0000000000,3.2360679774) --
(0.9510565162,2.9270509831)-- (0.0000000000,2.6180339887); %15
\draw (0.9510565162,1.3090169943)  -- (0.3632712640,2.1180339887) -- (0.9510565162,2.9270509831) --
(1.5388417685,2.1180339887)-- (0.9510565162,1.3090169943); %24
\draw (0.9510565162,0.3090169943)  -- (0.9510565162,1.3090169943) -- (1.5388417685,2.1180339887) --
(1.5388417685,1.1180339887)-- (0.9510565162,0.3090169943); %34
\draw (-0.5877852522,1.8090169943)  -- (-1.5388417685,2.1180339887)  -- node[font=\scriptsize, above left] {}  (-0.9510565162,2.9270509831) --
(0.0000000000,2.6180339887)-- (-0.5877852522,1.8090169943); %14
\draw (0.0000000000,0.0000000000)  -- (0.0000000000,1.0000000000) -- (0.9510565162,1.3090169943) --
(0.9510565162,0.3090169943)-- (0.0000000000,0.0000000000); %35
%\draw [thick,red,rounded corners=8pt] (-1.2449491424,1.2135254915) -- (-0.4755282581,0.6545084971) --
%(0.4755282581,0.6545084971) -- (1.2449491424,1.2135254915) -- (0.9510565162,2.1180339887) --
%(0.1816356320,2.3680339887) -- (-0.7694208842,2.3680339887);
{\draw [thick,blue] (-1.2449491424,1.2135254915) --  (-0.7694208842,1.5590169943)--
(0.1816356320,1.5590169943) -- (0.1816356320,2.3680339887) -- (-0.7694208842,2.3680339887);}
  \end{scope}

 \begin{scope}[shift={(-4,7)}]
 %\draw [thick] (-1.7, 2.3617839887) -- (-2.3, 2.6242839887);
\draw [thick] (-1.7, 0.8742839887) -- (-2.3, 0.6117839887);
\draw (0.0000000000,1.0000000000)  -- (-0.5877852522,1.8090169943)  -- (0.3632712640,2.1180339887) --
(0.9510565162,1.3090169943)-- (0.0000000000,1.0000000000); %25
\draw (0.0000000000,1.0000000000)  -- (-0.9510565162,1.3090169943) -- (-1.5388417685,2.1180339887) --
(-0.5877852522,1.8090169943)-- (0.0000000000,1.0000000000); %12
\draw (-0.9510565162,0.3090169943)  -- node[font=\scriptsize, below left] {} (-1.5388417685,1.1180339887)  -- node[font=\scriptsize, left] {} (-1.5388417685,2.1180339887) --
(-0.9510565162,1.3090169943)-- (-0.9510565162,0.3090169943); %23
\draw (-0.5877852522,1.8090169943)  -- (0.0000000000,2.6180339887) -- (0.9510565162,2.9270509831) --
(0.3632712640,2.1180339887)-- (-0.5877852522,1.8090169943); %45
\draw (0.0000000000,0.0000000000)  --  node[font=\scriptsize, below] {} (-0.9510565162,0.3090169943) -- (-0.9510565162,1.3090169943) --
(0.0000000000,1.0000000000)-- (0.0000000000,0.0000000000); %13
\draw (0.0000000000,2.6180339887)  -- (-0.9510565162,2.9270509831) -- node[font=\scriptsize, above] {}  (0.0000000000,3.2360679774) --
(0.9510565162,2.9270509831)-- (0.0000000000,2.6180339887); %15
\draw (0.9510565162,1.3090169943)  -- (0.3632712640,2.1180339887) -- (0.9510565162,2.9270509831) --
(1.5388417685,2.1180339887)-- (0.9510565162,1.3090169943); %24
\draw (0.9510565162,0.3090169943)  -- (0.9510565162,1.3090169943) -- (1.5388417685,2.1180339887) --
(1.5388417685,1.1180339887)-- (0.9510565162,0.3090169943); %34
\draw (-0.5877852522,1.8090169943)  -- (-1.5388417685,2.1180339887)  -- node[font=\scriptsize, above left] {}  (-0.9510565162,2.9270509831) --
(0.0000000000,2.6180339887)-- (-0.5877852522,1.8090169943); %14
\draw (0.0000000000,0.0000000000)  -- (0.0000000000,1.0000000000) -- (0.9510565162,1.3090169943) --
(0.9510565162,0.3090169943)-- (0.0000000000,0.0000000000); %35
%\draw [thick,red,rounded corners=8pt] (-1.2449491424,1.2135254915) -- (-0.4755282581,0.6545084971) --
%(0.4755282581,0.6545084971) -- (1.2449491424,1.2135254915) -- (0.9510565162,2.1180339887) --
%(0.1816356320,2.3680339887) -- (-0.7694208842,2.3680339887);
{\draw [thick,blue]  (-1.2449491424,1.2135254915) --  (-0.7694208842,1.5590169943) --
(0.1816356320,1.5590169943) --  (0.9510565162,2.1180339887) -- (0.1816356320,2.3680339887) -- (-0.7694208842,2.3680339887);}
  \end{scope}

  \begin{scope}[shift={(-8,1.75)}]
\draw [thick] (-1.7, 2.3617839887) -- (-2.3, 2.6242839887);
%\draw [thick] (-1.7, 0.8742839887) -- (-2.3, 0.6117839887);
\draw (0.0000000000,1.0000000000)  -- (-0.5877852522,1.8090169943)  -- (0.3632712640,2.1180339887) --
(0.9510565162,1.3090169943)-- (0.0000000000,1.0000000000); %25
\draw (0.0000000000,1.0000000000)  -- (-0.9510565162,1.3090169943) -- (-1.5388417685,2.1180339887) --
(-0.5877852522,1.8090169943)-- (0.0000000000,1.0000000000); %12
\draw (-0.9510565162,0.3090169943)  -- node[font=\scriptsize, below left] {} (-1.5388417685,1.1180339887)  -- node[font=\scriptsize, left] {} (-1.5388417685,2.1180339887) --
(-0.9510565162,1.3090169943)-- (-0.9510565162,0.3090169943); %23
\draw (-0.5877852522,1.8090169943)  -- (0.0000000000,2.6180339887) -- (0.9510565162,2.9270509831) --
(0.3632712640,2.1180339887)-- (-0.5877852522,1.8090169943); %45
\draw (0.0000000000,0.0000000000)  --  node[font=\scriptsize, below] {} (-0.9510565162,0.3090169943) -- (-0.9510565162,1.3090169943) --
(0.0000000000,1.0000000000)-- (0.0000000000,0.0000000000); %13
\draw (0.0000000000,2.6180339887)  -- (-0.9510565162,2.9270509831) -- node[font=\scriptsize, above] {}  (0.0000000000,3.2360679774) --
(0.9510565162,2.9270509831)-- (0.0000000000,2.6180339887); %15
\draw (0.9510565162,1.3090169943)  -- (0.3632712640,2.1180339887) -- (0.9510565162,2.9270509831) --
(1.5388417685,2.1180339887)-- (0.9510565162,1.3090169943); %24
\draw (0.9510565162,0.3090169943)  -- (0.9510565162,1.3090169943) -- (1.5388417685,2.1180339887) --
(1.5388417685,1.1180339887)-- (0.9510565162,0.3090169943); %34
\draw (-0.5877852522,1.8090169943)  -- (-1.5388417685,2.1180339887)  -- node[font=\scriptsize, above left] {}  (-0.9510565162,2.9270509831) --
(0.0000000000,2.6180339887)-- (-0.5877852522,1.8090169943); %14
\draw (0.0000000000,0.0000000000)  -- (0.0000000000,1.0000000000) -- (0.9510565162,1.3090169943) --
(0.9510565162,0.3090169943)-- (0.0000000000,0.0000000000); %35
%\draw [thick,red,rounded corners=8pt] (-1.2449491424,1.2135254915) -- (-0.4755282581,0.6545084971) --
%(0.4755282581,0.6545084971) -- (1.2449491424,1.2135254915) -- (0.9510565162,2.1180339887) --
%(0.1816356320,2.3680339887) -- (-0.7694208842,2.3680339887);
{\draw [thick,blue] (-1.2449491424,1.2135254915)  -- (-0.4755282581,0.6545084971) --  (-0.7694208842,1.5590169943) -- (-0.7694208842,2.3680339887);}
  \end{scope}

 \begin{scope}[shift={(-4,3.5)}]
\draw [thick] (-1.7, 2.3617839887) -- (-2.3, 2.6242839887);
\draw [thick] (-1.7, 0.8742839887) -- (-2.3, 0.6117839887);
\draw (0.0000000000,1.0000000000)  -- (-0.5877852522,1.8090169943)  -- (0.3632712640,2.1180339887) --
(0.9510565162,1.3090169943)-- (0.0000000000,1.0000000000); %25
\draw (0.0000000000,1.0000000000)  -- (-0.9510565162,1.3090169943) -- (-1.5388417685,2.1180339887) --
(-0.5877852522,1.8090169943)-- (0.0000000000,1.0000000000); %12
\draw (-0.9510565162,0.3090169943)  -- node[font=\scriptsize, below left] {} (-1.5388417685,1.1180339887)  -- node[font=\scriptsize, left] {} (-1.5388417685,2.1180339887) --
(-0.9510565162,1.3090169943)-- (-0.9510565162,0.3090169943); %23
\draw (-0.5877852522,1.8090169943)  -- (0.0000000000,2.6180339887) -- (0.9510565162,2.9270509831) --
(0.3632712640,2.1180339887)-- (-0.5877852522,1.8090169943); %45
\draw (0.0000000000,0.0000000000)  --  node[font=\scriptsize, below] {} (-0.9510565162,0.3090169943) -- (-0.9510565162,1.3090169943) --
(0.0000000000,1.0000000000)-- (0.0000000000,0.0000000000); %13
\draw (0.0000000000,2.6180339887)  -- (-0.9510565162,2.9270509831) -- node[font=\scriptsize, above] {}  (0.0000000000,3.2360679774) --
(0.9510565162,2.9270509831)-- (0.0000000000,2.6180339887); %15
\draw (0.9510565162,1.3090169943)  -- (0.3632712640,2.1180339887) -- (0.9510565162,2.9270509831) --
(1.5388417685,2.1180339887)-- (0.9510565162,1.3090169943); %24
\draw (0.9510565162,0.3090169943)  -- (0.9510565162,1.3090169943) -- (1.5388417685,2.1180339887) --
(1.5388417685,1.1180339887)-- (0.9510565162,0.3090169943); %34
\draw (-0.5877852522,1.8090169943)  -- (-1.5388417685,2.1180339887)  -- node[font=\scriptsize, above left] {}  (-0.9510565162,2.9270509831) --
(0.0000000000,2.6180339887)-- (-0.5877852522,1.8090169943); %14
\draw (0.0000000000,0.0000000000)  -- (0.0000000000,1.0000000000) -- (0.9510565162,1.3090169943) --
(0.9510565162,0.3090169943)-- (0.0000000000,0.0000000000); %35
%\draw [thick,red,rounded corners=8pt] (-1.2449491424,1.2135254915) -- (-0.4755282581,0.6545084971) --
%(0.4755282581,0.6545084971) -- (1.2449491424,1.2135254915) -- (0.9510565162,2.1180339887) --
%(0.1816356320,2.3680339887) -- (-0.7694208842,2.3680339887);
{\draw [thick,blue] (-1.2449491424,1.2135254915) -- (-0.4755282581,0.6545084971)
-- (-0.7694208842,1.5590169943)-- (0.1816356320,1.5590169943)--
(0.1816356320,2.3680339887) -- (-0.7694208842,2.3680339887);}
  \end{scope}

   \begin{scope}[shift={(0,1.75)}]
\draw [thick] (-1.7, 2.3617839887) -- (-2.3, 2.6242839887);
%\draw [thick] (-1.7, 0.8742839887) -- (-2.3, 0.6117839887);
\draw (0.0000000000,1.0000000000)  -- (-0.5877852522,1.8090169943)  -- (0.3632712640,2.1180339887) --
(0.9510565162,1.3090169943)-- (0.0000000000,1.0000000000); %25
\draw (0.0000000000,1.0000000000)  -- (-0.9510565162,1.3090169943) -- (-1.5388417685,2.1180339887) --
(-0.5877852522,1.8090169943)-- (0.0000000000,1.0000000000); %12
\draw (-0.9510565162,0.3090169943)  -- node[font=\scriptsize, below left] {} (-1.5388417685,1.1180339887)  -- node[font=\scriptsize, left] {} (-1.5388417685,2.1180339887) --
(-0.9510565162,1.3090169943)-- (-0.9510565162,0.3090169943); %23
\draw (-0.5877852522,1.8090169943)  -- (0.0000000000,2.6180339887) -- (0.9510565162,2.9270509831) --
(0.3632712640,2.1180339887)-- (-0.5877852522,1.8090169943); %45
\draw (0.0000000000,0.0000000000)  --  node[font=\scriptsize, below] {} (-0.9510565162,0.3090169943) -- (-0.9510565162,1.3090169943) --
(0.0000000000,1.0000000000)-- (0.0000000000,0.0000000000); %13
\draw (0.0000000000,2.6180339887)  -- (-0.9510565162,2.9270509831) -- node[font=\scriptsize, above] {}  (0.0000000000,3.2360679774) --
(0.9510565162,2.9270509831)-- (0.0000000000,2.6180339887); %15
\draw (0.9510565162,1.3090169943)  -- (0.3632712640,2.1180339887) -- (0.9510565162,2.9270509831) --
(1.5388417685,2.1180339887)-- (0.9510565162,1.3090169943); %24
\draw (0.9510565162,0.3090169943)  -- (0.9510565162,1.3090169943) -- (1.5388417685,2.1180339887) --
(1.5388417685,1.1180339887)-- (0.9510565162,0.3090169943); %34
\draw (-0.5877852522,1.8090169943)  -- (-1.5388417685,2.1180339887)  -- node[font=\scriptsize, above left] {}  (-0.9510565162,2.9270509831) --
(0.0000000000,2.6180339887)-- (-0.5877852522,1.8090169943); %14
\draw (0.0000000000,0.0000000000)  -- (0.0000000000,1.0000000000) -- (0.9510565162,1.3090169943) --
(0.9510565162,0.3090169943)-- (0.0000000000,0.0000000000); %35
%\draw [thick,red,rounded corners=8pt] (-1.2449491424,1.2135254915) -- (-0.4755282581,0.6545084971) --
%(0.4755282581,0.6545084971) -- (1.2449491424,1.2135254915) -- (0.9510565162,2.1180339887) --
%(0.1816356320,2.3680339887) -- (-0.7694208842,2.3680339887);
{\draw [thick,blue] (-1.2449491424,1.2135254915) -- (-0.4755282581,0.6545084971) --
(0.4755282581,0.6545084971) -- (0.1816356320,1.5590169943)
-- (0.1816356320,2.3680339887) -- (-0.7694208842,2.3680339887);;}
  \end{scope}

  \begin{scope}[shift={(4,3.5)}]
\draw [thick] (-1.7, 2.3617839887) -- (-2.3, 2.6242839887);
\draw [thick] (-1.7, 0.8742839887) -- (-2.3, 0.6117839887);
\draw (0.0000000000,1.0000000000)  -- (-0.5877852522,1.8090169943)  -- (0.3632712640,2.1180339887) --
(0.9510565162,1.3090169943)-- (0.0000000000,1.0000000000); %25
\draw (0.0000000000,1.0000000000)  -- (-0.9510565162,1.3090169943) -- (-1.5388417685,2.1180339887) --
(-0.5877852522,1.8090169943)-- (0.0000000000,1.0000000000); %12
\draw (-0.9510565162,0.3090169943)  -- node[font=\scriptsize, below left] {} (-1.5388417685,1.1180339887)  -- node[font=\scriptsize, left] {} (-1.5388417685,2.1180339887) --
(-0.9510565162,1.3090169943)-- (-0.9510565162,0.3090169943); %23
\draw (-0.5877852522,1.8090169943)  -- (0.0000000000,2.6180339887) -- (0.9510565162,2.9270509831) --
(0.3632712640,2.1180339887)-- (-0.5877852522,1.8090169943); %45
\draw (0.0000000000,0.0000000000)  --  node[font=\scriptsize, below] {} (-0.9510565162,0.3090169943) -- (-0.9510565162,1.3090169943) --
(0.0000000000,1.0000000000)-- (0.0000000000,0.0000000000); %13
\draw (0.0000000000,2.6180339887)  -- (-0.9510565162,2.9270509831) -- node[font=\scriptsize, above] {}  (0.0000000000,3.2360679774) --
(0.9510565162,2.9270509831)-- (0.0000000000,2.6180339887); %15
\draw (0.9510565162,1.3090169943)  -- (0.3632712640,2.1180339887) -- (0.9510565162,2.9270509831) --
(1.5388417685,2.1180339887)-- (0.9510565162,1.3090169943); %24
\draw (0.9510565162,0.3090169943)  -- (0.9510565162,1.3090169943) -- (1.5388417685,2.1180339887) --
(1.5388417685,1.1180339887)-- (0.9510565162,0.3090169943); %34
\draw (-0.5877852522,1.8090169943)  -- (-1.5388417685,2.1180339887)  -- node[font=\scriptsize, above left] {}  (-0.9510565162,2.9270509831) --
(0.0000000000,2.6180339887)-- (-0.5877852522,1.8090169943); %14
\draw (0.0000000000,0.0000000000)  -- (0.0000000000,1.0000000000) -- (0.9510565162,1.3090169943) --
(0.9510565162,0.3090169943)-- (0.0000000000,0.0000000000); %35
%\draw [thick,red,rounded corners=8pt] (-1.2449491424,1.2135254915) -- (-0.4755282581,0.6545084971) --
%(0.4755282581,0.6545084971) -- (1.2449491424,1.2135254915) -- (0.9510565162,2.1180339887) --
%(0.1816356320,2.3680339887) -- (-0.7694208842,2.3680339887);
{\draw [thick,blue] (-1.2449491424,1.2135254915) -- (-0.4755282581,0.6545084971) --
(0.4755282581,0.6545084971) -- (0.1816356320,1.5590169943)
-- (0.9510565162,2.1180339887) --
(0.1816356320,2.3680339887) -- (-0.7694208842,2.3680339887);;}
  \end{scope}

   \begin{scope}[shift={(8,3.5)}]
\draw [thick] (-1.7, 1.6180339887) -- (-2.3, 1.6180339887);
\draw (0.0000000000,1.0000000000)  -- (-0.5877852522,1.8090169943)  -- (0.3632712640,2.1180339887) --
(0.9510565162,1.3090169943)-- (0.0000000000,1.0000000000); %25
\draw (0.0000000000,1.0000000000)  -- (-0.9510565162,1.3090169943) -- (-1.5388417685,2.1180339887) --
(-0.5877852522,1.8090169943)-- (0.0000000000,1.0000000000); %12
\draw (-0.9510565162,0.3090169943)  -- node[font=\scriptsize, below left] {} (-1.5388417685,1.1180339887)  -- node[font=\scriptsize, left] {} (-1.5388417685,2.1180339887) --
(-0.9510565162,1.3090169943)-- (-0.9510565162,0.3090169943); %23
\draw (-0.5877852522,1.8090169943)  -- (0.0000000000,2.6180339887) -- (0.9510565162,2.9270509831) --
(0.3632712640,2.1180339887)-- (-0.5877852522,1.8090169943); %45
\draw (0.0000000000,0.0000000000)  --  node[font=\scriptsize, below] {} (-0.9510565162,0.3090169943) -- (-0.9510565162,1.3090169943) --
(0.0000000000,1.0000000000)-- (0.0000000000,0.0000000000); %13
\draw (0.0000000000,2.6180339887)  -- (-0.9510565162,2.9270509831) -- node[font=\scriptsize, above] {}  (0.0000000000,3.2360679774) --
(0.9510565162,2.9270509831)-- (0.0000000000,2.6180339887); %15
\draw (0.9510565162,1.3090169943)  -- (0.3632712640,2.1180339887) -- (0.9510565162,2.9270509831) --
(1.5388417685,2.1180339887)-- (0.9510565162,1.3090169943); %24
\draw (0.9510565162,0.3090169943)  -- (0.9510565162,1.3090169943) -- (1.5388417685,2.1180339887) --
(1.5388417685,1.1180339887)-- (0.9510565162,0.3090169943); %34
\draw (-0.5877852522,1.8090169943)  -- (-1.5388417685,2.1180339887)  -- node[font=\scriptsize, above left] {}  (-0.9510565162,2.9270509831) --
(0.0000000000,2.6180339887)-- (-0.5877852522,1.8090169943); %14
\draw (0.0000000000,0.0000000000)  -- (0.0000000000,1.0000000000) -- (0.9510565162,1.3090169943) --
(0.9510565162,0.3090169943)-- (0.0000000000,0.0000000000); %35
%\draw [thick,red,rounded corners=8pt] (-1.2449491424,1.2135254915) -- (-0.4755282581,0.6545084971) --
%(0.4755282581,0.6545084971) -- (1.2449491424,1.2135254915) -- (0.9510565162,2.1180339887) --
%(0.1816356320,2.3680339887) -- (-0.7694208842,2.3680339887);
{\draw [thick,blue] (-1.2449491424,1.2135254915) -- (-0.4755282581,0.6545084971) --
(0.4755282581,0.6545084971) -- (1.2449491424,1.2135254915) -- (0.9510565162,2.1180339887) --
(0.1816356320,2.3680339887) -- (-0.7694208842,2.3680339887);}
  \end{scope}

\end{tikzpicture}}$$
\end{ex}

There is a unique maximal element in $\Gamma_a$. It is given by the strip sequence $(a, a+1)$. Thus any $\gamma\in\Gamma_a$ consists of tiles in
\begin{equation}\label{def:comb}
\mathcal{W}_a := \cl{(a, a+1)}\subset \mathcal{T}.
\end{equation}
We call $\mathcal{W}_a$ the \emph{$a$-comb}.

\begin{ex}\label{ex:combs} We continue with Example \ref{tilingex} and depict $\mathcal{W}_1$ and $\mathcal{W}_3$ in $\mathcal{T}$
\begin{center}
\begin{tikzpicture}
\draw (0.0000000000,1.0000000000)  -- (-0.5877852522,1.8090169943)  -- (0.3632712640,2.1180339887) --
(0.9510565162,1.3090169943)-- (0.0000000000,1.0000000000); %25

\draw  [pattern color=red, pattern=horizontal lines] (0.0000000000,1.0000000000)  -- (-0.9510565162,1.3090169943) -- (-1.5388417685,2.1180339887) --
(-0.5877852522,1.8090169943)-- (0.0000000000,1.0000000000); %12

\draw [pattern color=red, pattern=horizontal lines] (-0.9510565162,0.3090169943)  -- node[font=\scriptsize, below left] {2} (-1.5388417685,1.1180339887)  -- node[font=\scriptsize, left] {3} (-1.5388417685,2.1180339887) --
(-0.9510565162,1.3090169943)-- (-0.9510565162,0.3090169943); %23

\draw (-0.5877852522,1.8090169943)  -- (0.0000000000,2.6180339887) -- (0.9510565162,2.9270509831) --
(0.3632712640,2.1180339887)-- (-0.5877852522,1.8090169943); %45

\draw [pattern color=red, pattern=horizontal lines] (0.0000000000,0.0000000000)  node[below] {$\mathcal{W}_1$} --  node[font=\scriptsize, below] {1} (-0.9510565162,0.3090169943) -- (-0.9510565162,1.3090169943) --
(0.0000000000,1.0000000000)-- (0.0000000000,0.0000000000); %13

\draw (0.0000000000,2.6180339887)  -- (-0.9510565162,2.9270509831) -- node[font=\scriptsize, above] {5}  (0.0000000000,3.2360679774) --
(0.9510565162,2.9270509831)-- (0.0000000000,2.6180339887); %15

\draw (0.9510565162,1.3090169943)  -- (0.3632712640,2.1180339887) -- (0.9510565162,2.9270509831) --
(1.5388417685,2.1180339887)-- (0.9510565162,1.3090169943); %24

\draw (0.9510565162,0.3090169943)  -- (0.9510565162,1.3090169943) -- (1.5388417685,2.1180339887) --
(1.5388417685,1.1180339887)-- (0.9510565162,0.3090169943); %34

\draw   (-0.5877852522,1.8090169943)  -- (-1.5388417685,2.1180339887)  -- node[font=\scriptsize, above left] {4}  (-0.9510565162,2.9270509831) --
(0.0000000000,2.6180339887)-- (-0.5877852522,1.8090169943); %14

\draw (0.0000000000,0.0000000000)  -- (0.0000000000,1.0000000000) -- (0.9510565162,1.3090169943) --
(0.9510565162,0.3090169943)-- (0.0000000000,0.0000000000); %35

\draw [thick,red,rounded corners=8pt]  (-0.4755282581,0.6545084971) -- (-0.7694208842,1.5590169943) -- (-1.2449491424,1.2135254915);

\begin{scope}[shift={(4,0)}]
\draw [pattern color=red, pattern=horizontal lines] (0.0000000000,1.0000000000)  -- (-0.5877852522,1.8090169943)  -- (0.3632712640,2.1180339887) --
(0.9510565162,1.3090169943)-- (0.0000000000,1.0000000000); %25

\draw [pattern color=red, pattern=horizontal lines] (0.0000000000,1.0000000000)  -- (-0.9510565162,1.3090169943) -- (-1.5388417685,2.1180339887) --
(-0.5877852522,1.8090169943)-- (0.0000000000,1.0000000000); %12

\draw [pattern color=red, pattern=horizontal lines] (-0.9510565162,0.3090169943)  -- node[font=\scriptsize, below left] {2} (-1.5388417685,1.1180339887)  -- node[font=\scriptsize, left] {3} (-1.5388417685,2.1180339887) --
(-0.9510565162,1.3090169943)-- (-0.9510565162,0.3090169943); %23

\draw [pattern color=red, pattern=horizontal lines] (-0.5877852522,1.8090169943)  -- (0.0000000000,2.6180339887) -- (0.9510565162,2.9270509831) --
(0.3632712640,2.1180339887)-- (-0.5877852522,1.8090169943); %45

\draw [pattern color=red, pattern=horizontal lines] (0.0000000000,0.0000000000) node[below] {$\mathcal{W}_3$} --  node[font=\scriptsize, below] {1} (-0.9510565162,0.3090169943) -- (-0.9510565162,1.3090169943) --
(0.0000000000,1.0000000000)-- (0.0000000000,0.0000000000); %13

\draw (0.0000000000,2.6180339887)  -- (-0.9510565162,2.9270509831) -- node[font=\scriptsize, above] {5}  (0.0000000000,3.2360679774) --
(0.9510565162,2.9270509831)-- (0.0000000000,2.6180339887); %15

\draw [pattern color=red, pattern=horizontal lines] (0.9510565162,1.3090169943)  -- (0.3632712640,2.1180339887) -- (0.9510565162,2.9270509831) --
(1.5388417685,2.1180339887)-- (0.9510565162,1.3090169943); %24

\draw [pattern color=red, pattern=horizontal lines] (0.9510565162,0.3090169943)  -- (0.9510565162,1.3090169943) -- (1.5388417685,2.1180339887) --
(1.5388417685,1.1180339887)-- (0.9510565162,0.3090169943); %34

\draw  [pattern color=red, pattern=horizontal lines] (-0.5877852522,1.8090169943)  -- (-1.5388417685,2.1180339887)  -- node[font=\scriptsize, above left] {4}  (-0.9510565162,2.9270509831) --
(0.0000000000,2.6180339887)-- (-0.5877852522,1.8090169943); %14

\draw [pattern color=red, pattern=horizontal lines] (0.0000000000,0.0000000000)  -- (0.0000000000,1.0000000000) -- (0.9510565162,1.3090169943) --
(0.9510565162,0.3090169943)-- (0.0000000000,0.0000000000); %35

\draw [thick,red,rounded corners=8pt] (-1.2449491424,1.2135254915) -- (-0.4755282581,0.6545084971) --
(0.4755282581,0.6545084971) -- (1.2449491424,1.2135254915) -- (0.9510565162,2.1180339887) --
(0.1816356320,2.3680339887) -- (-0.7694208842,2.3680339887);
\end{scope}
\end{tikzpicture}
\end{center}

\end{ex}

\subsection{Degeneration of combs}\label{mutseqsection}
In the following we explain an inductive way to transform transform $\mathcal{W}_a$ into a singleton by successive flips and how this reflects on the associated posets of $a$-crossings.

\begin{lem}\label{hexexists}
If 	$\#\mathcal{W}_a>1$ then there exists a hexagon $\h\subset \mathcal{W}_a$ with $\h =\{[a,s], [a,t], [s,t]\}$.
\end{lem}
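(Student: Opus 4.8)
The plan is to realise $\mathcal{W}_a$ as the lens-shaped region cut out by the two strips $\li^a$ and $\li^{a+1}$, and then to produce the required hexagon as an \emph{empty triangle} of the resulting strip arrangement having $\li^a$ as one of its sides.

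First I would make the geometry of $\mathcal{W}_a$ explicit. The unique maximal $a$-crossing has strip sequence $(a,a+1)$, so it runs along $\li^a$ from its initial tile $\li^a_1$ on the left boundary up to the unique tile $[a,a+1]$ carrying both an $a$- and an $(a+1)$-edge, and then along $\li^{a+1}$ back to $\li^{a+1}_1$. Since $\li^a$ and $\li^{a+1}$ emanate from the two \emph{adjacent} left-boundary edges labelled $a$ and $a+1$ and meet exactly once, namely at $[a,a+1]$, the set $\cl{(a,a+1)}$ of tiles not lying right of this crossing is exactly the closed lens bounded by the two arcs, with apex $[a,a+1]$. Thus $\mathcal{W}_a$ is a rhombic tiling of this lens; its tiles are the transversal intersections $[s,t]$ of the strips entering the lens, and such a tile lies on the lower boundary arc $\li^a$ if and only if it carries an $a$-edge.

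Next I would introduce a height. As all edges labelled $a$ are parallel, let $h$ be a generic linear functional vanishing in the $a$-direction and increasing from $\li^a$ into the lens, so that distinct tiles of $\mathcal{W}_a$ get distinct heights and $\li^a$ sits at the bottom. Assuming $\#\mathcal{W}_a>1$, I would first observe that $\mathcal{W}_a$ contains a tile avoiding $\li^a$: any non-apex tile carrying an $a$-edge is some $[a,s]$, which forces $\li^s$ to cross $\li^{a+1}$ inside the lens and hence produces the tile $[a+1,s]$ with no $a$-edge. I would then pick the tile $[u,v]\in\mathcal{W}_a$ of minimal height among all tiles not meeting $\li^a$ (so $u,v\neq a$, with $\{u,v\}$ possibly containing $a+1$), and set $\h:=\{[a,u],[a,v],[u,v]\}$.

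Finally I would argue that $\li^a,\li^u,\li^v$ bound an empty triangle, so that $\h$ is the desired hexagon. Following $\li^u$ and $\li^v$ downwards from $[u,v]$, neither can meet a third strip nor $\li^{a+1}$ before reaching $\li^a$, as such a meeting would be a lower tile avoiding $\li^a$, contradicting minimality; hence they descend to the boundary crossings $[a,u],[a,v]$ with nothing in between. Likewise no strip enters the triangle through its base on $\li^a$, since it would have to exit across $\li^u$ or $\li^v$ below $[u,v]$, again a lower tile avoiding $\li^a$. The triangle is therefore empty, its three corner tiles $[a,u],[a,v],[u,v]$ all lie in $\mathcal{W}_a$, and by \eqref{hexbraid} they form a hexagon of the required shape $\{[a,s],[a,t],[s,t]\}$. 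I expect the main obstacle to be the first step, namely pinning down rigorously that $\mathcal{W}_a$ is precisely this lens and that ``height'' and ``lowest crossing'' are well defined, rather than the extremal argument itself; one must in particular resist the tempting but flawed attempt of always using the apex $[a,a+1]$ together with the strip nearest to it, since the strips closest to the apex along $\li^a$ and along $\li^{a+1}$ need not coincide.
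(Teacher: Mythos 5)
Your overall strategy --- an extremal argument isolating an empty triangle of the arrangement $\{\li^a,\li^u,\li^v\}$ with $\li^a$ as one side --- is the same as the paper's, and your preliminary steps are sound: the lens description of $\mathcal{W}_a$, the existence of a tile of $\mathcal{W}_a$ off $\li^a$, and the fact that any $[u,v]\in\mathcal{W}_a$ with $u,v\neq a$ has both $[a,u]$ and $[a,v]$ in $\mathcal{W}_a$. The gap is in the extremal device itself, and it is not merely a matter of rigor. A linear functional $h$ vanishing in the $e_a$-direction cannot ``increase from $\li^a$ into the lens'': passing from a tile $[a,w]$ of $\li^a$ to its neighbour inside the lens across a shared $e_w$-edge is a displacement of the form $\mp\tfrac12 e_a\pm\tfrac12 e_{w'}$, whose $h$-value is $\pm\tfrac12 h(e_{w'})$, and $h(e_{w'})$ changes sign according to whether $w'<a$ or $w'>a$; so the tiles of $\li^a\cap\mathcal{W}_a$ are not uniformly $h$-minimal and your two requirements on $h$ are incompatible. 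Worse, the step that actually carries the proof --- that every tile met on $\li^u$ (or $\li^v$) strictly between $[u,v]$ and the base is \emph{lower} than $[u,v]$ --- fails for the functionals that do exist, since the increments of $h$ along a strip are $\pm\tfrac12 h(e_{w})\pm\tfrac12 h(e_{w'})$ and are not sign-definite. Concretely, in Example \ref{tilingex} with $a=3$ one has $e_3=(0,1)$, so $h=\pm x$ up to scale; for $h=-x$ the $h$-minimal tile of $\mathcal{W}_3$ off $\li^3$ is $[2,4]$, yet $\{[2,3],[3,4],[2,4]\}$ is not a hexagon: the segment of $\li^2$ from $[2,4]$ down to $[2,3]$ passes through $[2,5]$ and $[1,2]$, both of which avoid $\li^3$ but have \emph{larger} $h$-value than $[2,4]$, so your minimality hypothesis yields no contradiction there.

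The paper sidesteps this by making the extremal quantity combinatorial rather than metric: it considers the triangles $\Delta(s,t)$ cut out by $\li^a,\li^s,\li^t$ inside $\mathcal{W}_a$, subject to the extra constraint that $[s,t]$ is already a neighbour of $[a,s]$, and descends by strict inclusion of these triangles, the constraint forcing any further strip entering $\Delta(s,t)$ to cross the $\li^a$-side and thereby produce a strictly smaller triangle of the same kind. To repair your argument you would need an analogous combinatorial substitute for ``height'' --- for instance inclusion of the triangles themselves, or the distance function $\kappa$ from the partition \eqref{eq:kappa} --- together with a proof of the corresponding monotonicity along $\li^u$ and $\li^v$; as written, the monotonicity claims are exactly the unproven (and, for your $h$, false) content.
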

\begin{proof}
For $s,t\in[n] - \{a\}$ with $s\neq t$ we denote by $\Delta(s,t)$ the subset of tiles of $\mathcal{T}$ cut out by the strip sequences $\li^s$, $\li^t$ and $\li^a$.  We consider the set $\mathfrak{S}$ of all such $\Delta(s,t)\subset\mathcal{W}_a$ such that $[s,t]$ is a neighbour of $[a,s]$. The set $\mathfrak{S}$ is not empty, since $\Delta(a+1,t)\in\mathfrak{S}$, where $[a+1,t]$ is the neighbour of $[a,a+1]$ in $\mathcal{W}_a$. 

If $\Delta(s,t) \in \mathfrak{S}$ does not satisfy $\Delta(s,t)=\{[a,s], [a,t], [s,t]\}$ then  there exists $s'\in[n]-\{a,s,t\}$ with $\Delta(s,t) \cap\li^{s'}\neq \emptyset$. Since $\Delta(s,t) \cap\li^{s}=\{[a,s],[s,t]\}$ it follows that $[a,s']\in\Delta(s,t) $ and that $[a,s']$ posses a neighbour $[s',t']\in\Delta(s,t) $. Thus $\Delta(s,t)\supsetneq \Delta(s',t')\in\mathfrak{S}$. Proceeding inductively we obtain an inclusion minimal element $\h\in\mathfrak{S}$ with the desired property.
\end{proof}
\begin{ex} We depict the induction of the proof of Lemma \ref{hexexists} for the example of $\mathcal{W}_3$ given in Example \ref{ex:combs}.
\[
\begin{tikzpicture}
%\begin{scope}[shift={(4,0)}]
\draw [pattern color=blue, pattern=horizontal lines] (0.0000000000,1.0000000000)  -- (-0.5877852522,1.8090169943)  -- (0.3632712640,2.1180339887) --
(0.9510565162,1.3090169943)-- (0.0000000000,1.0000000000); %25

\draw [pattern color=blue, pattern=horizontal lines] (0.0000000000,1.0000000000)  -- (-0.9510565162,1.3090169943) -- (-1.5388417685,2.1180339887) --
(-0.5877852522,1.8090169943)-- (0.0000000000,1.0000000000); %12

\draw [pattern color=blue, pattern=horizontal lines] (-0.9510565162,0.3090169943)    -- node[font=\scriptsize, below left] {$t=2$} (-1.5388417685,1.1180339887)  -- node[font=\scriptsize, left] {$a=3$} (-1.5388417685,2.1180339887) --
(-0.9510565162,1.3090169943) % node[font=\scriptsize, left] {$\mathbf{T_1}$}
-- (-0.9510565162,0.3090169943); %23

\draw (-0.5877852522,1.8090169943)  -- (0.0000000000,2.6180339887) -- (0.9510565162,2.9270509831) --
(0.3632712640,2.1180339887)-- (-0.5877852522,1.8090169943); %45

\draw [pattern color=blue, pattern=horizontal lines] (0.0000000000,0.0000000000) node[below=10pt] {$\Delta(a+1,t)$} --  node[font=\scriptsize, below] {1} (-0.9510565162,0.3090169943) -- (-0.9510565162,1.3090169943) --
(0.0000000000,1.0000000000)-- (0.0000000000,0.0000000000); %13

\draw (0.0000000000,2.6180339887)  -- (-0.9510565162,2.9270509831) -- node[font=\scriptsize, above] {5}  (0.0000000000,3.2360679774) --
(0.9510565162,2.9270509831)-- (0.0000000000,2.6180339887); %15

\draw [pattern color=blue, pattern=horizontal lines] (0.9510565162,1.3090169943)   -- (0.3632712640,2.1180339887) %node[font=\scriptsize, right=10pt] {$\mathbf{T_2}$}
 -- (0.9510565162,2.9270509831) --
(1.5388417685,2.1180339887)-- (0.9510565162,1.3090169943); %24

\draw [pattern color=blue, pattern=horizontal lines] (0.9510565162,0.3090169943)  -- (0.9510565162,1.3090169943)% node[font=\scriptsize, right] {$\mathbf{T_1}$}
 -- (1.5388417685,2.1180339887) --
(1.5388417685,1.1180339887)-- (0.9510565162,0.3090169943); %34

\draw (-0.5877852522,1.8090169943)  -- (-1.5388417685,2.1180339887)  -- node[font=\scriptsize, above left] {$s=a+1=4$}  (-0.9510565162,2.9270509831) --
(0.0000000000,2.6180339887)-- (-0.5877852522,1.8090169943); %14

\draw [pattern color=blue, pattern=horizontal lines] (0.0000000000,0.0000000000)  -- (0.0000000000,1.0000000000) -- (0.9510565162,1.3090169943) --
(0.9510565162,0.3090169943)-- (0.0000000000,0.0000000000); %35

\draw [thick,red,rounded corners=8pt] (-1.2449491424,1.2135254915) -- (-0.4755282581,0.6545084971) --
(0.4755282581,0.6545084971) -- (1.2449491424,1.2135254915);

\draw [thick,green,rounded corners=8pt] (1.2449491424,1.2135254915) -- (0.9510565162,2.1180339887) -- (0.1816356320,2.3680339887) -- (-0.7694208842,2.3680339887);
%\end{scope}
{\draw [thick,blue,rounded corners=8pt]  (-1.2449491424,1.2135254915) --  (-0.7694208842,1.5590169943) --
(0.1816356320,1.5590169943) --  (0.9510565162,2.1180339887);}% -- (0.1816356320,2.3680339887) -- (-0.7694208842,2.3680339887);}
\begin{scope}[shift={(5,0)}]
\draw [pattern color=blue, pattern=horizontal lines] (0.0000000000,1.0000000000) % node[font=\scriptsize, above=3pt] {$\mathbf{T_2'}$}
-- (-0.5877852522,1.8090169943)  -- (0.3632712640,2.1180339887) --
(0.9510565162,1.3090169943)-- (0.0000000000,1.0000000000); %25

\draw [pattern color=blue, pattern=horizontal lines] (0.0000000000,1.0000000000)  -- (-0.9510565162,1.3090169943) -- (-1.5388417685,2.1180339887) --
(-0.5877852522,1.8090169943)-- (0.0000000000,1.0000000000); %12

\draw [pattern color=blue, pattern=horizontal lines] (-0.9510565162,0.3090169943)    -- node[font=\scriptsize, below left] {$t'=2$} (-1.5388417685,1.1180339887)  -- node[font=\scriptsize, left] {$a=3$} (-1.5388417685,2.1180339887) --
(-0.9510565162,1.3090169943) %node[font=\scriptsize, left] {$\mathbf{T_1}$}
-- (-0.9510565162,0.3090169943); %23

\draw (-0.5877852522,1.8090169943)  -- (0.0000000000,2.6180339887) -- (0.9510565162,2.9270509831) --
(0.3632712640,2.1180339887)-- (-0.5877852522,1.8090169943); %45

\draw [pattern color=blue, pattern=horizontal lines] (0.0000000000,0.0000000000) node[below=10pt] {$\Delta(s',t')$} --  node[font=\scriptsize, below] {1} (-0.9510565162,0.3090169943) -- (-0.9510565162,1.3090169943) --
(0.0000000000,1.0000000000)-- (0.0000000000,0.0000000000); %13

\draw (0.0000000000,2.6180339887)  -- (-0.9510565162,2.9270509831) -- node[font=\scriptsize, above] {$s'=5$}  (0.0000000000,3.2360679774) --
(0.9510565162,2.9270509831)-- (0.0000000000,2.6180339887); %15

\draw (0.9510565162,1.3090169943)   -- (0.3632712640,2.1180339887)  -- (0.9510565162,2.9270509831) --
(1.5388417685,2.1180339887)-- (0.9510565162,1.3090169943); %24

\draw (0.9510565162,0.3090169943)  -- (0.9510565162,1.3090169943) -- (1.5388417685,2.1180339887) --
(1.5388417685,1.1180339887)-- (0.9510565162,0.3090169943); %34

\draw (-0.5877852522,1.8090169943)  -- (-1.5388417685,2.1180339887)  -- node[font=\scriptsize, above left] {$4$}  (-0.9510565162,2.9270509831) --
(0.0000000000,2.6180339887)-- (-0.5877852522,1.8090169943); %14

\draw [pattern color=blue, pattern=horizontal lines] (0.0000000000,0.0000000000) %node[font=\scriptsize, above right=10pt] {$\mathbf{T_1'}$}
 -- (0.0000000000,1.0000000000) -- (0.9510565162,1.3090169943) --
(0.9510565162,0.3090169943)-- (0.0000000000,0.0000000000); %35

%\draw [thick,red,rounded corners=8pt] (-1.2449491424,1.2135254915) -- (-0.4755282581,0.6545084971) --
%(0.4755282581,0.6545084971) -- (1.2449491424,1.2135254915) -- (0.9510565162,2.1180339887) --
%(0.1816356320,2.3680339887) -- (-0.7694208842,2.3680339887);
\draw [thick,red,rounded corners=8pt] (-1.2449491424,1.2135254915) -- (-0.4755282581,0.6545084971) --
(0.4755282581,0.6545084971) -- (1.2449491424,1.2135254915);
{\draw [thick,green,rounded corners=8pt]
(0.4755282581,0.6545084971) -- (0.1816356320,1.5590169943)
-- (0.1816356320,2.3680339887)  -- (0.0000000000,2.9180339887);;}
%\draw [thick,green,rounded corners=8pt] (1.2449491424,1.2135254915) -- (0.9510565162,2.1180339887) -- (0.1816356320,2.3680339887) -- %(-0.7694208842,2.3680339887);

{\draw [thick,blue,rounded corners=8pt]  (-1.2449491424,1.2135254915) --  (-0.7694208842,1.5590169943) --
(0.1816356320,1.5590169943) --  (0.9510565162,2.1180339887);}% -- (0.1816356320,2.3680339887) -- (-0.7694208842,2.3680339887);}
\end{scope}
%\begin{scope}[shift={(10,0)}]
%\draw [pattern color=blue, pattern=horizontal lines] (0.0000000000,1.0000000000) % node[font=\scriptsize, above=3pt] {$\mathbf{T_3'}$}
-- (-0.5877852522,1.8090169943)  -- (0.3632712640,2.1180339887) --
\end{tikzpicture}
\]
The following induction step yields the desired hexagon $\Delta(s'',t'')$ with $s''=1$, $t''=2$.
$$
\begin{tikzpicture}
\draw (0.0000000000,1.0000000000)
-- (-0.5877852522,1.8090169943)  -- (0.3632712640,2.1180339887) --
(0.9510565162,1.3090169943)-- (0.0000000000,1.0000000000); %25

\draw [pattern color=blue, pattern=horizontal lines] (0.0000000000,1.0000000000)  -- (-0.9510565162,1.3090169943) -- (-1.5388417685,2.1180339887) --
(-0.5877852522,1.8090169943)-- (0.0000000000,1.0000000000); %12

\draw [pattern color=blue, pattern=horizontal lines] (-0.9510565162,0.3090169943)    -- node[font=\scriptsize, below left] {$t''=2$} (-1.5388417685,1.1180339887)  -- node[font=\scriptsize, left] {$a=3$} (-1.5388417685,2.1180339887) --
(-0.9510565162,1.3090169943) -- (-0.9510565162,0.3090169943); %23

\draw (-0.5877852522,1.8090169943)  -- (0.0000000000,2.6180339887) -- (0.9510565162,2.9270509831) --
(0.3632712640,2.1180339887)-- (-0.5877852522,1.8090169943); %45

\draw [pattern color=blue, pattern=horizontal lines] (0.0000000000,0.0000000000) node[below=10pt] {$\Delta(s'',t'')$} --  node[font=\scriptsize, below] {$s''=2$} (-0.9510565162,0.3090169943) -- (-0.9510565162,1.3090169943) --
(0.0000000000,1.0000000000)-- (0.0000000000,0.0000000000); %13

\draw (0.0000000000,2.6180339887)  -- (-0.9510565162,2.9270509831) -- node[font=\scriptsize, above] {5}  (0.0000000000,3.2360679774) --
(0.9510565162,2.9270509831)-- (0.0000000000,2.6180339887); %15

\draw (0.9510565162,1.3090169943)   -- (0.3632712640,2.1180339887)  -- (0.9510565162,2.9270509831) --
(1.5388417685,2.1180339887)-- (0.9510565162,1.3090169943); %24

\draw (0.9510565162,0.3090169943)  -- (0.9510565162,1.3090169943) -- (1.5388417685,2.1180339887) --
(1.5388417685,1.1180339887)-- (0.9510565162,0.3090169943); %34

\draw (-0.5877852522,1.8090169943)  -- (-1.5388417685,2.1180339887)  -- node[font=\scriptsize, above left] {$4$}  (-0.9510565162,2.9270509831) --
(0.0000000000,2.6180339887)-- (-0.5877852522,1.8090169943); %14

\draw (0.0000000000,0.0000000000)
-- (0.0000000000,1.0000000000) -- (0.9510565162,1.3090169943) --
(0.9510565162,0.3090169943)-- (0.0000000000,0.0000000000); %35

\draw [thick,red,rounded corners=8pt] (-1.2449491424,1.2135254915) -- (-0.4755282581,0.6545084971) --
(0.4755282581,0.6545084971) -- (1.2449491424,1.2135254915);
{\draw [thick,green,rounded corners=8pt]  (-0.4755282581,0.6545084971) --  (-0.7694208842,1.5590169943) -- (-0.7694208842,2.3680339887) -- (0.0000000000,2.9180339887);}
{\draw [thick,blue,rounded corners=8pt]  (-1.2449491424,1.2135254915) --  (-0.7694208842,1.5590169943) --
(0.1816356320,1.5590169943) --  (0.9510565162,2.1180339887);}% -- (0.1816356320,2.3680339887) -- (-0.7694208842,2.3680339887);}
\end{tikzpicture}
$$
\end{ex}

Let $\h=\{[a,s], [a,t], [s,t]\} \subset\mathcal{W}_a$ be a hexagon as in Lemma \ref{hexexists} and let $\mathcal{S}$ be the tiling obtained from $\mathcal{T}$ by flipping $\h$. Using Definition \ref{s-order} we assume that $s,t$ are chosen such that $[a,s]_\mathcal{T} <_a [a,t]_\mathcal{T}$ (i.e. $s<a<t$ or $a<t<s$ or $t<s<a$). Then
$
\mathcal{V}_a:=\cl{\left(a,a+1\right)}\subset\mathcal{S}
$
satisfies \begin{equation}\label{eq:combs}
\mathcal{V}_a= \begin{cases} \left\{ [u_1, u_2]_\mathcal{S} \mid [u_1, u_2]_\mathcal{T} \in \mathcal{W}_a \right\} - \left\{[s,t]_\mathcal{S} \right\} & \text{if $t\neq a+1,$} \\
\left\{[u_1, u_2]_\mathcal{S} \mid [u_1, u_2]_\mathcal{T} \in \mathcal{W}_a \right\} - \left\{[s,t]_\mathcal{S}, [a,s]_\mathcal{S} \right\} & \text{if $t=a+1$}.\end{cases}
\end{equation}
In particular $\#\mathcal{V}_a < \#\mathcal{W}_a$.

We denote the set of $a$-crossing in $\mathcal{S}$ by $\Lambda_a$ and define the map
\begin{equation}\label{combext}
\begin{split}
\pi&=\pi_\h : \Gamma_a \twoheadrightarrow \Lambda_a
\\
\pi_\h  \left(u_1,  \dots, u_m \right) &:=\left\{ \begin{array}{ll} \left(u_1, u_3, u_4, \dots, u_m \right)&\text{if $u_2, u_3\in\{s,t\}$,} \\ \left(u_1,\dots, u_m \right) &\text{else.} \end{array} \right.
\end{split}
\end{equation}
Furthermore, we introduce the following subsets of crossings in $\Gamma_a$.
\begin{defi}\label{quots}
The subset $\Gamma^-\subset \Gamma_a$, resp. $\Gamma^+\subset \Gamma_a$, is the set of crossings whose first turn is at a tile $[a,u_2]$ which appears before $[a,s]$ in $\li^a$, resp. whose first turn is at a tile $[a,u_2]$ which appears after $[a,t]$ in $\li^a$, i.e.
$$\Gamma^- := \left\{\left(u_i \right)_{1\leq i \leq m} \in \Gamma_a \,\middle|\, \left[u_1, u_2\right]_\mathcal{T} <_a \left[a, s \right]_\mathcal{T}
\right\} \quad \Gamma^+ := \left\{\left(u_i \right)_{1\leq i \leq m} \in \Gamma_a \,\middle|\, \left[u_1, u_2\right]_\mathcal{T} >_a \left[a, t \right]_\mathcal{T}
\right\}.$$
The subset $\Gamma^{st}\subset \Gamma_a$, resp. $\Gamma^{s}\subset \Gamma_a$, is the set of crossings whose first turn is at the tile $[a,s]$ and whose second turn is at the tile $[s,t]$, resp. whose second turn is at a tile $[s,t']>_{a} [s,t]$ on $\li^s$, i.e.
$$ \Gamma^{st} := \left\{\left(u_i \right)_{1\leq i \leq m} \in \Gamma_a \,\middle|\, u_2=s, \, u_3= t\right\},\quad \Gamma^s := \left\{\left(u_i \right)_{1\leq i \leq m} \in \Gamma_a \,\middle|\, u_2=s, \, u_3\neq t\right\}.$$
$$\begin{tikzpicture}
\draw (0.0000000000,0.0000000000) node[below=10pt]{$\Gamma^{st}$}  -- node[below left]{$s$} (-0.8660254037,0.4999999999) -- node[left]{$a$} (-0.8660254037,1.5000000000) --
(0.0000000000,1.0000000000)-- (0.0000000000,0.0000000000);
\draw (0.0000000000,1.0000000000)  -- (-0.8660254037,1.5000000000) -- (0.0000000000,2.0000000000) --
(0.8660254037,1.5000000000)-- (0.0000000000,1.0000000000);
\draw (0.0000000000,0.0000000000)  -- (0.0000000000,1.0000000000) -- (0.8660254037,1.5000000000) --
(0.8660254037,0.4999999999)-- node[below right]{$t$} (0.0000000000,0.0000000000);
\draw [thick,blue](-0.8660254037,1) -- (-0.4330127018,0.7500000000) -- (0.0000000000,1.5000000000) -- (-0.4330127011, 1.75);
\begin{scope}[shift={(5,0)}]
\draw (0.0000000000,0.0000000000) node[below=10pt]{$\Gamma^{s}$}  -- node[below left]{$s$} (-0.8660254037,0.4999999999) -- node[left]{$a$} (-0.8660254037,1.5000000000) --
(0.0000000000,1.0000000000)-- (0.0000000000,0.0000000000);
\draw (0.0000000000,1.0000000000)  -- (-0.8660254037,1.5000000000) -- (0.0000000000,2.0000000000) --
(0.8660254037,1.5000000000)-- (0.0000000000,1.0000000000);
\draw (0.0000000000,0.0000000000)  -- (0.0000000000,1.0000000000) -- (0.8660254037,1.5000000000) --
(0.8660254037,0.4999999999)-- node[below right]{$t$} (0.0000000000,0.0000000000);
\draw [thick,blue](-0.8660254037,1) -- (-0.4330127018,0.7500000000) -- (0.0000000000,1.5000000000) -- (0.4330127011, 1.75);
\end{scope}
\end{tikzpicture}$$
The subset $\Gamma^{ts}\subset \Gamma_a$, resp. $\Gamma^{t}\subset \Gamma_a$, is the set of crossings whose first turn is at the tile $[a,t]$ and whose second turn is at the tile $[s,t]$, resp. whose second turn is at a tile $[s',t]>_{a} [s,t]$ on $\li^s$, i.e.
$$ \Gamma^{ts} := \left\{\left(u_i \right)_{1\leq i \leq m} \in \Gamma_a \,\middle|\, u_2=t, \, u_3= s\right\},\quad \Gamma^t := \left\{\left(u_i \right)_{1\leq i \leq m} \in \Gamma_a \,\middle|\, u_2=t, \, u_3\neq s\right\}.$$
$$\begin{tikzpicture}
\draw (0.0000000000,0.0000000000) node[below=10pt]{$\Gamma^{ts}$}  -- node[below left]{$s$} (-0.8660254037,0.4999999999) -- node[left]{$a$} (-0.8660254037,1.5000000000) --
(0.0000000000,1.0000000000)-- (0.0000000000,0.0000000000);
\draw (0.0000000000,1.0000000000)  -- (-0.8660254037,1.5000000000) -- (0.0000000000,2.0000000000) --
(0.8660254037,1.5000000000)-- (0.0000000000,1.0000000000);
\draw (0.0000000000,0.0000000000)  -- (0.0000000000,1.0000000000) -- (0.8660254037,1.5000000000) --
(0.8660254037,0.4999999999)-- node[below right]{$t$} (0.0000000000,0.0000000000);
\draw [thick,blue](-0.8660254037,1) -- (-0.4330127018,0.7500000000) -- (0.4330127018,0.7500000000) --
(0.0000000000,1.5000000000) -- (0.4330127011, 1.75);
\begin{scope}[shift={(5,0)}]
\draw (0.0000000000,0.0000000000) node[below=10pt]{$\Gamma^{t}$}  -- node[below left]{$s$} (-0.8660254037,0.4999999999) -- node[left]{$a$} (-0.8660254037,1.5000000000) --
(0.0000000000,1.0000000000)-- (0.0000000000,0.0000000000);
\draw (0.0000000000,1.0000000000)  -- (-0.8660254037,1.5000000000) -- (0.0000000000,2.0000000000) --
(0.8660254037,1.5000000000)-- (0.0000000000,1.0000000000);
\draw (0.0000000000,0.0000000000)  -- (0.0000000000,1.0000000000) -- (0.8660254037,1.5000000000) --
(0.8660254037,0.4999999999)-- node[below right]{$t$} (0.0000000000,0.0000000000);
\draw [thick,blue](-0.8660254037,1) -- (-0.4330127018,0.7500000000) -- (0.4330127018,0.7500000000) --
(0.0000000000,1.5000000000) -- (-0.4330127011, 1.75) ;
\end{scope}
\end{tikzpicture}.$$
Analogously we define the subsets $\Lambda^-$ and $\Lambda^+$ of $\Lambda_a$ by
$$\Lambda^- := \left\{\left(u_i \right)_{1\leq i \leq m} \in \Lambda_a \,\middle|\, \left[u_1, u_2\right]_\mathcal{S} <_a \left[a, t \right]_\mathcal{S}
\right\}, \quad \Lambda^+ := \left\{\left(u_i \right)_{1\leq i \leq m} \in \Lambda_a \,\middle|\, \left[u_1, u_2\right]_\mathcal{S} >_a \left[a, s \right]_\mathcal{S}
\right\}$$
and the subsets $\Lambda^s$ and $\Lambda^t$ of $\Lambda_a$ by
$$\Lambda^s := \left\{\left(u_i \right)_{1\leq i \leq m} \in \Lambda_a \,\middle|\, u_2=s\right\} \quad \Lambda^t := \left\{\left(u_i \right)_{1\leq i \leq m} \in \Lambda_a \,\middle|\, u_2=t\right\}.$$
$$\begin{tikzpicture}
\draw (0.0000000000,0.0000000000) node[below=10pt]{$\Lambda^s$}  -- node[below]{$s$} (-0.8660254037,0.4999999999) -- (0.0000000000,0.9999999999) --
(0.8660254037,0.4999999999)-- node[below]{$t$} (0.0000000000,0.0000000000);
\draw (0.8660254037,0.4999999999)  --  (0.0000000000,0.9999999999) --  (0.0000000000,1.9999999999) --
(0.8660254037,1.5000000000)-- (0.8660254037,0.4999999999);
\draw (-0.8660254037,0.4999999999)  -- node[left]{$a$} (-0.8660254037,1.5000000000) -- (0.0000000000,2.0000000000) --
(0.0000000000,0.9999999999)-- (-0.8660254037,0.4999999999);
\draw [thick,blue] (-0.8660254037,1) -- (-0.4330127018,1.2500000000) -- (0.4330127018,1.2500000000) -- (0.4330127018,1.7500000000);
\begin{scope}[shift={(5,0)}]
\draw (0.0000000000,0.0000000000) node[below=10pt]{$\Lambda^t$}  -- node[below]{$s$} (-0.8660254037,0.4999999999) -- (0.0000000000,0.9999999999) --
(0.8660254037,0.4999999999)-- node[below]{$t$} (0.0000000000,0.0000000000);
\draw (0.8660254037,0.4999999999)  -- (0.0000000000,0.9999999999) -- (0.0000000000,1.9999999999) --
(0.8660254037,1.5000000000)-- (0.8660254037,0.4999999999);
\draw (-0.8660254037,0.4999999999)  -- node[left]{$a$} (-0.8660254037,1.5000000000) -- (0.0000000000,2.0000000000) --
(0.0000000000,0.9999999999)-- (-0.8660254037,0.4999999999);
\draw [thick,blue] (-0.8660254037,1) -- (-0.4330127018,1.2500000000) -- (-0.4330127018,1.75);
\end{scope}
\end{tikzpicture}.$$
We further define
$$X:=\left\{\Gamma^-, \Gamma^s, \Gamma^{st}, \Gamma^t, \Gamma^{ts}, \Gamma^+ \right\} - \{\emptyset\}, \quad Y:=\left\{\Lambda^-, \Lambda^s, \Lambda^t,  \Lambda^+ \right\} -\{\emptyset\}.$$
\end{defi}
The quotient map $p_X : \Gamma_a \twoheadrightarrow X$ induces on $X$ the partial order $\leq_X$ given by
\begin{equation}\label{cartdiag}
\begin{matrix}
& &\Gamma^s \ \ \ \ \ {}_{\rotatebox{300}{\text{$<$}}} & & \\
\Gamma^-  < & {\Gamma^{st}} \ \ \ \ \ {}^{\rotatebox{60}{\text{$<$}}}_{\rotatebox{300}{\text{$<$}}} & &  \Gamma^{ts} & < \Gamma^+ \\
&  & {\Gamma^t} \ \ \ \ \ {}^{\rotatebox{60}{\text{$<$}}}. & &
\end{matrix}
\end{equation}
The quotient map $p_Y: \Lambda_a \twoheadrightarrow Y$ induces on $Y$ the order $\leq_Y$ given by
$$
\Lambda^- < \Lambda^t < \Lambda^s < \Lambda^+.
$$
The map
$
f: X \twoheadrightarrow Y,
$
defined by $f(\Gamma^-)=\Lambda^-$, $f(\Gamma^{st}) = f(\Gamma^t)=\Lambda^t$, $f(\Gamma^{ts}) = f(\Gamma^s)=\Lambda^s$, is order preserving and
\begin{equation}
\begin{split}\label{cart}
\begin{CD}
\Gamma_a @>{\pi_\mathcal{H}}>> \Lambda_a\\
@V{p_X}VV @V{p_Y}VV \\
X @>{f}>> Y,
\end{CD}
\end{split}
\end{equation}
is a Cartesian diagram of sets, i.e. $\Gamma_a=\Lambda_a \times_{Y} X$.
We show: 
\begin{prop}\label{orderpres}
The diagram \eqref{cart} is a Cartesian diagram of distributive lattices.
\end{prop}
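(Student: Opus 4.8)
The plan is to promote the set-level Cartesian square $\Gamma_a = \Lambda_a \times_Y X$ of \eqref{cart} to a Cartesian square of distributive lattices. The structural principle I would use is this: if $\Lambda_a$, $X$, $Y$ are distributive lattices and $p_Y\colon\Lambda_a\to Y$, $f\colon X\to Y$ are lattice homomorphisms, then the set-theoretic fibre product $\Lambda_a\times_Y X=\{(\lambda,\xi)\mid p_Y(\lambda)=f(\xi)\}$ is closed under the componentwise meet and join of the product lattice $\Lambda_a\times X$, hence is a sublattice of $\Lambda_a\times X$; as a sublattice of a distributive lattice it is itself distributive, and it is exactly the fibre product in the category of distributive lattices. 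So it suffices to check the hypotheses and then to identify the poset $(\Gamma_a,\preceq)$ of Proposition \ref{isorder} with this fibre-product lattice.

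The explicit data are quickly dispatched. The poset $Y$ is the chain $\Lambda^-<\Lambda^t<\Lambda^s<\Lambda^+$, hence a distributive lattice. The poset $X$ displayed in \eqref{cartdiag} is the ordinal sum $\{\Gamma^-\}\oplus B_2\oplus\{\Gamma^+\}$, where $B_2=\{\Gamma^{st}<\Gamma^s,\Gamma^t<\Gamma^{ts}\}$ is the four-element Boolean lattice; since $B_2$ is distributive and ordinal sums of distributive lattices are distributive, $X$ is a distributive lattice. The map $f$ is order preserving by inspection, and to see it is a lattice homomorphism one need only check the unique incomparable pair $\Gamma^s,\Gamma^t$: here $f(\Gamma^s\wedge\Gamma^t)=f(\Gamma^{st})=\Lambda^t=\Lambda^s\wedge\Lambda^t$ and $f(\Gamma^s\vee\Gamma^t)=f(\Gamma^{ts})=\Lambda^s=\Lambda^s\vee\Lambda^t$. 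The remaining inputs come from running the degeneration of Section \ref{mutseqsection} as an induction on $\#\mathcal{W}_a$, the base case $\#\mathcal{W}_a=1$ giving the one-element lattice $\Gamma_a=\{(a,a+1)\}$. Applying the inductive hypothesis to the flipped tiling $\mathcal{S}$, whose comb $\mathcal{V}_a$ is strictly smaller by \eqref{eq:combs}, furnishes that $\Lambda_a$ is a distributive lattice and (setting up the inductive statement to include the first-turn stratification maps) that $p_Y$ is a lattice homomorphism.

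The heart of the argument is to show that the bijection $(\pi_\mathcal{H},p_X)\colon\Gamma_a\to\Lambda_a\times_Y X$ is an isomorphism of posets; as both sides will then be lattices, this is equivalent to being a lattice isomorphism, after which $\pi_\mathcal{H}=\mathrm{pr}_1\circ(\pi_\mathcal{H},p_X)$ and $p_X=\mathrm{pr}_2\circ(\pi_\mathcal{H},p_X)$ are automatically lattice homomorphisms. Since the map is already a bijection of sets, it remains to prove that $\preceq$ is both preserved and reflected, i.e. that $\gamma\preceq\gamma'$ holds if and only if $\pi_\mathcal{H}(\gamma)\preceq\pi_\mathcal{H}(\gamma')$ in $\Lambda_a$ and $p_X(\gamma)\leq_X p_X(\gamma')$ in $X$. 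My approach is to produce an explicit dictionary for the defining data $\cl{\gamma}$ and $\op{\gamma}$ of \eqref{orderdef}. Outside the hexagon $\h=\{[a,s],[a,t],[s,t]\}$ the tilings $\mathcal{T}$ and $\mathcal{S}$ agree under the identification \eqref{eq:combs}, and the sequences $\gamma$ and $\pi_\mathcal{H}(\gamma)$ coincide, so $\cl{\gamma}\cap(\mathcal{W}_a\setminus\h)$ corresponds to $\cl{\pi_\mathcal{H}(\gamma)}$ and likewise for $\op{}$, whereas the intersection $\cl{\gamma}\cap\h$ of the three hexagon tiles is completely determined by the type $p_X(\gamma)\in X$. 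Granting this dictionary, each of the two inclusions defining $\gamma\preceq\gamma'$ splits into an inclusion outside $\h$ governed by $\pi_\mathcal{H}$ and an inclusion inside $\h$ governed by $p_X$, which is precisely the componentwise order on the fibre product.

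The main obstacle is establishing this dictionary rigorously and checking its compatibility in both directions. Two points require care. First, one must verify that $p_X$ and $\pi_\mathcal{H}$ are each order preserving; for $p_X$ this rests on the fact that the $\leq_a$-position of the first turn of a crossing along $\li^a$ is weakly monotone for $\preceq$, refined by the position of the second turn so as to separate $\Gamma^{st}$ from $\Gamma^s$ and $\Gamma^{ts}$ from $\Gamma^t$. Second, and most delicately, reflecting the order requires that the local data near $\h$ and the global data in $\mathcal{V}_a$ glue consistently along the hexagon boundary; this is exactly enforced by the fibre-product compatibility $f\circ p_X=p_Y\circ\pi_\mathcal{H}$, which matches the strip by which a crossing enters and leaves $\h$ with its type. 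I expect the bulk of the labour to be the case analysis over the six strata of \eqref{cartdiag}, recording for each stratum which of $[a,s],[a,t],[s,t]$ lies in $\cl{\gamma}$ and which in $\op{\gamma}$, and confirming that the resulting membership tables assemble into the claimed order isomorphism; once this is done, $\Gamma_a$ inherits the distributive lattice structure of $\Lambda_a\times_Y X$ and \eqref{cart} is Cartesian of distributive lattices.
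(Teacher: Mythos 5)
Your proposal follows essentially the same route as the paper: the poset-level Cartesian property is proved by splitting the defining data $\cl{\gamma}$, $\op{\gamma}$ into their intersections with the hexagon $\h$ (determined by the stratum $p_X(\gamma)$) and with its complement (matching $\pi_\mathcal{H}(\gamma)$), and the lattice structure is then obtained by induction on $\#\mathcal{W}_a$ via the degeneration of Section \ref{mutseqsection}, with the inductive hypothesis carrying the statement that the first-turn map is a lattice morphism. The only cosmetic difference is that you verify the distributivity of $X$ and the lattice-homomorphism property of $f$ explicitly, which the paper leaves implicit.
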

\begin{proof}
We first show that \eqref{cart} is a Cartesian diagram of posets, i.e. 
for $\gamma_1=(\lambda_1,\xi_1), \gamma_2=(\lambda_2, \xi_2)\in \Gamma_a=\Lambda_a \times_{Y} X$ we have
\begin{equation}\label{cartpos}\gamma_1\preceq \gamma_2 \iff (\lambda_1 \preceq \lambda_2) \wedge (\xi_1 \leq \xi_2).
\end{equation}
Using the definition of $\preceq$ given in \eqref{orderdef} we write
\begin{align*}
\gamma_1\preceq \gamma_2 & \Leftrightarrow \underbrace{ \cl{\gamma}_1 \cap \h \subseteq \cl{\gamma}_2\cap \h}_{(i)}
\wedge \underbrace{ \cl{\gamma}_1 - \h \subseteq \cl{\gamma}_2-\h}_{(ii)} \wedge
\underbrace{ \op{\gamma}_1 \cap \h \subseteq \op{\gamma}_2\cap \h}_{(iii)} 
\wedge \underbrace{ \op{\gamma}_1 - \h \subseteq \op{\gamma}_2-\h}_{(iv)},\\\lambda_1\preceq \lambda_2 & \Leftrightarrow \underbrace{ \cl{\lambda}_1 \cap \h \subseteq \cl{\lambda}_2\cap \h}_{(i')}
\wedge \underbrace{ \cl{\lambda}_1 - \h \subseteq \cl{\lambda}_2-\h}_{(ii')}\wedge
\underbrace{ \op{\lambda}_1 \cap \h \subseteq \op{\lambda}_2\cap \h}_{(iii')}
\wedge \underbrace{ \op{\lambda}_1 - \h \subseteq \op{\lambda}_2-\h}_{(iv')}.
\end{align*}
Since $\lambda_1 = \pi \gamma_1$ and $\lambda_2=\pi\gamma_2$ we have $(ii)\Leftrightarrow (ii')$ and $(iv) \Leftrightarrow (iv')$.

For an arbitrary $\gamma\in\Gamma_a$ we have
\begin{align*}%\label{gopcomb}
\op{\gamma} \cap \h  &= \begin{cases} \left\{ [s,t]\right\}  &\text{if $\gamma\in\Gamma^+$,}\\\emptyset &\text{else,}\end{cases}\\%\label{gclcomb}
\cl{\gamma} \cap \h  &= \begin{cases} \emptyset &\text{if $\gamma\in\Gamma^-$,}\\\left\{[a,s],[s,t]\right\} &\text{if $\gamma\in\Gamma^{st} \cup \Gamma^{t}$,}\\\h &\text{else.}\end{cases}
\end{align*}
Thus, $\xi_1\leq \xi_2$ implies  $(i)$ and $(iii)$.

Condition $(iii')$ is empty, since for any $\lambda\in\Lambda_a$ one has $\op{\lambda}\cap\h=\emptyset.$ To establish \eqref{cartpos} it remains to show 
$
\gamma_1 \preceq \gamma_2 \Rightarrow (i').
$
This holds since $f\circ p_X$ is order preserving and for any $\gamma\in\Gamma_a$: 
$$
\cl{\pi \gamma} \cap \h = \begin{cases} \emptyset & f\circ p_X (\gamma) = \Lambda^-,\\ \left\{[a,t]
\right\} & f\circ p_X (\gamma) = \Lambda^t,
\\ \left\{[a,t], [a,s] \right\}& \text{else.}
\end{cases}
$$

One can show that $\Gamma_a$ and $\Lambda_a$ are lattices by explicitly constructing suprema and infima. We give a different proof using induction on $\#\mathcal{W}_a$.

We order $\li^a$ via 
$
\li^a_i \leq \li^a_j :\Leftrightarrow i\leq j
$
and define maps
\begin{align}\notag
\overline{p}_X &: \Gamma_a \rightarrow \li^a, \quad \left(a=u_1, \dots, u_m\right)\mapsto \left[a, u_2\right],\\\notag
\overline{p}_Y &: \Lambda_a \rightarrow \li^a, \quad \left(a=u_1, \dots, u_m\right)\mapsto \left[a, u_2\right].
\end{align}
If $\overline{p}_Y$ is a morphism of distributive lattices, then so is $p_Y$. This in turn implies using that $f$ is a morphism of distributive lattices that the Cartesian diagram of posets \eqref{cart} is a Cartesian diagram of distributive lattices. From this we obtain that $\overline{p}_X$ is a morphism of distributive lattices. 
Using induction we can assume by Lemma \ref{hexexists} and \eqref{eq:combs} that $\Lambda_a$ is a singleton and the claim follows.
\end{proof}

\subsection{Crossing Formula}\label{formula}
Let $a\in [n-1]$.
 To state the formula for the crystal structure $(f_a, e_a, \varepsilon_a)$ on $\mathbb{N}^{\mathcal{T}}$ we introduce the following notation.
For $\gamma\in\Gamma_a$ with strip sequence $(s_1=a, s_2, \dots, s_M=a+1)$ as in \eqref{ssdef} we define $\rvec{\gamma} \in\mathbb{Z}^{\mathcal{T}}$
\begin{align}
\rvecds{\gamma}_T &:= \begin{cases}\sgn \left(s_{i+1}-s_i \right)  & \text{if $T=[s_i, s_{i+1}]$ for some $i$,}\\ 0 & \text{else}\end{cases}
\label{Rdef}
\end{align}
 and $\drvec{\gamma}\in\Hom(\Z^{\mathcal{T}}, \Z)$ by
\begin{align}
\notag
\overline{\epsilon}_a \left( [s,t] \right) &:= \left\{ \begin{array}{cc} 1 & \text{if $s\leq a< a+1 \leq t$}, \\ -1 & \text{else} \end{array} \right. &&\text{for $[s,t]\in\mathcal{T}$,} \\\label{Fdef2}
\drvecxd{\gamma}{x}&= \sum_{\substack{T\in\gamma\\\overline{\epsilon}_a (T)=1  }} x_{T}- \sum_{\substack{T\in\gamma\\ \overline{\epsilon} (T)=-1 \\ \rvecs{\gamma}_T=0  }} x_{T} && \text{for $x\in\Z^{\mathcal{T}}$.}
\end{align}
\begin{defi}\label{Rcrossings}
We say $\gamma\in\Gamma_a$ is a \emph{Reineke $a$-crossing} if it satisfies the following condition: For any $\gamma_i=[s,t]$ such that $\gamma_{i-1}, \gamma_i$ and $\gamma_{i+1}$ lie on the same strip $\mathcal{L}^s$ we have
\begin{align*} s > t & \quad \text{if }t\le a, \\
s<t & \quad \text{if }t \geq a+1. \end{align*}
We denote the set of all Reineke $a$-crossings by $\mathcal{R}_a$.
\end{defi}

\begin{rem}\label{gprem} Using the relation between rhombic tilings and wiring diagrams (see \cite[Section 2]{DKK}) the notion of Reineke crossings translates into the notion of rigorous paths which appear in the work \cite{GP} of Gleizer and Postnikov in the description of string cone inequalities, see also Remark \ref{gpremark}.
\end{rem}
\begin{rem}\label{reinlat}
Using the notation of Section \ref{mutseqsection} we have that $\gamma\in\Gamma_a$ is a Reineke crossing precisely if $\pi(\gamma)\in\Lambda_a$ is a Reineke crossing and 
\begin{align*} \gamma\notin\Gamma^{t} & \quad \text{if }t\le a, \\
 \gamma\notin\Gamma^{s} & \quad \text{if }s\geq a+1. \end{align*}
 Thus, by induction we obtain from Proposition \ref{orderpres} that $\mathcal{R}_a \subseteq  \Gamma_a$ is a sublattice.
\end{rem}

\begin{ex} In the poset $\Gamma_3$ given in Example \ref{ex:poset} all $3$-crossings are Reineke $3$-crossings except $\gamma=(3,1,4)$ since we have $1 < 2 \le 3$.
$$ \begin{tikzpicture}
\draw (0.0000000000,1.0000000000)  -- (-0.5877852522,1.8090169943)  -- (0.3632712640,2.1180339887) --
(0.9510565162,1.3090169943)-- (0.0000000000,1.0000000000); %25
\draw (0.0000000000,1.0000000000)  -- (-0.9510565162,1.3090169943) -- (-1.5388417685,2.1180339887) --
(-0.5877852522,1.8090169943)-- (0.0000000000,1.0000000000); %12
\draw (-0.9510565162,0.3090169943)  -- node[font=\scriptsize, below left] {2} (-1.5388417685,1.1180339887)  -- node[font=\scriptsize, left] {3} (-1.5388417685,2.1180339887) --
(-0.9510565162,1.3090169943)-- (-0.9510565162,0.3090169943); %23
\draw (-0.5877852522,1.8090169943)  -- (0.0000000000,2.6180339887) -- (0.9510565162,2.9270509831) --
(0.3632712640,2.1180339887)-- (-0.5877852522,1.8090169943); %45
\draw (0.0000000000,0.0000000000)  --  node[font=\scriptsize, below] {1} (-0.9510565162,0.3090169943) -- (-0.9510565162,1.3090169943) --
(0.0000000000,1.0000000000)-- (0.0000000000,0.0000000000); %13
\draw (0.0000000000,2.6180339887)  -- (-0.9510565162,2.9270509831) -- node[font=\scriptsize, above] {5}  (0.0000000000,3.2360679774) --
(0.9510565162,2.9270509831)-- (0.0000000000,2.6180339887); %15
\draw (0.9510565162,1.3090169943)  -- (0.3632712640,2.1180339887) -- (0.9510565162,2.9270509831) --
(1.5388417685,2.1180339887)-- (0.9510565162,1.3090169943); %24
\draw (0.9510565162,0.3090169943)  -- (0.9510565162,1.3090169943) -- (1.5388417685,2.1180339887) --
(1.5388417685,1.1180339887)-- (0.9510565162,0.3090169943); %34
\draw (-0.5877852522,1.8090169943)  -- (-1.5388417685,2.1180339887)  -- node[font=\scriptsize, above left] {4}  (-0.9510565162,2.9270509831) --
(0.0000000000,2.6180339887)-- (-0.5877852522,1.8090169943); %14
\draw (0.0000000000,0.0000000000)  -- (0.0000000000,1.0000000000) -- (0.9510565162,1.3090169943) --
(0.9510565162,0.3090169943)-- (0.0000000000,0.0000000000); %35
%\draw [thick,red,rounded corners=8pt] (-1.2449491424,1.2135254915) -- (-0.4755282581,0.6545084971) --
%(0.4755282581,0.6545084971) -- (1.2449491424,1.2135254915) -- (0.9510565162,2.1180339887) --
%(0.1816356320,2.3680339887) -- (-0.7694208842,2.3680339887);
{\draw [thick,blue] (-1.2449491424,1.2135254915)  -- (-0.4755282581,0.6545084971) --  (-0.7694208842,1.5590169943) -- (-0.7694208842,2.3680339887);}
{\draw [thick,dashed,blue]
(-0.7694208842,1.5590169943)-- (0.1816356320,1.5590169943);
%--(0.9510565162,2.1180339887)--
%(0.1816356320,2.3680339887) -- (-0.7694208842,2.3680339887)
;}
\end{tikzpicture}$$
\end{ex}
We prove the following Crossing Formula for the action of the Kashiwara operators on $\ii$-Lusztig data. 
\begin{thm}[Crossing Formula]\label{formula1}
Let $\ii$ be a reduced word and $\mathcal{T}$ the tiling associated to $\ii$. For $a\in [n-1]$ and an $\ii$-Lusztig datum $x\in \mathbb{N}^{\mathcal{T}}$ we have
\begin{equation*}
f_a x-x=\rvecd{\gamma^x},
\end{equation*}
where $\gamma^x$ is the $\preceq$-maximal element in $\Gamma_a$ satisfying
\begin{equation}\label{Fvcond}
\drvecxd{\gamma^x}{x} = \max_{\gamma\in\Gamma_a} \drvecx{\gamma}{x}.
\end{equation}
Furthermore, we have $\gamma^x\in\mathcal{R}_a$ and
$$
\varepsilon_a (x)  = \max_{\gamma\in\Gamma_a} \drvecx{\gamma}{x} = \max_{\gamma\in\mathcal{R}_a} \drvecx{\gamma}{x}.
$$
\end{thm}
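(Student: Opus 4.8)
The plan is to prove all three assertions simultaneously by induction on $\#\mathcal{W}_a$, shrinking the $a$-comb one flip at a time via the degeneration procedure of Section \ref{mutseqsection}. In the base case $\#\mathcal{W}_a=1$ we have $\mathcal{W}_a=\{[a,a+1]\}$ with $[a,a+1]=\li^a_1=\li^{a+1}_1$ on the left boundary, so $\Gamma_a$ is the single crossing $\gamma$ of strip sequence $(a,a+1)$. Choosing a reduced word $\ii$ with $i_1=a$ for this tiling (permissible after commutation moves, which change neither the Lusztig coordinate at $[a,a+1]$ by \eqref{tlmutc} nor the crossing combinatorics), the explicit formula of Definition \ref{def:crystal} applies, and \eqref{Rdef}, \eqref{Fdef2} give $\rvec{\gamma}$ equal to the unit vector at $[a,a+1]$ and $\drvecx{\gamma}{x}=x_{[a,a+1]}$, matching $f_a x-x$ and $\varepsilon_a(x)$; the crossing is trivially Reineke.

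For the step, assume $\#\mathcal{W}_a>1$ and choose, by Lemma \ref{hexexists}, a hexagon $\h=\{[a,s],[a,t],[s,t]\}\subset\mathcal{W}_a$ with $[a,s]<_a[a,t]$; let $\mathcal{S}$ be the flip of $\mathcal{T}$ at $\h$, with reduced words $\jj$ for $\mathcal{S}$ and $\ii$ for $\mathcal{T}$, and put $x':=\RR^{\ii}_{\jj}x$. By \eqref{eq:combs} the comb $\mathcal{V}_a\subset\mathcal{S}$ is strictly smaller, so the induction hypothesis holds for $\Lambda_a$ and $x'$. Since $f_a=\RR^{\jj}_{\ii}\circ f_a\circ\RR^{\ii}_{\jj}$ by Definition \ref{def:crystal}, it suffices to transport the formula on $\mathcal{S}$ through $\RR^{\jj}_{\ii}$, and everything reduces to two local computations at $\h$, organized by the Cartesian square \eqref{cart} and the six-part decomposition $X=\{\Gamma^-,\Gamma^s,\Gamma^{st},\Gamma^t,\Gamma^{ts},\Gamma^+\}$.

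For the functional, I would show that for $\gamma\in\Gamma_a$ with $\delta:=\pi_\h(\gamma)$ the value $\drvecx{\gamma}{x}$ depends only on $\delta$ and on the part of $X$ containing $\gamma$, and that, using the exchange relation \eqref{tlmutb} on the three hexagon coordinates together with the values of $\overline{\epsilon}_a$ on $\h$, the maximum of $\drvecx{\gamma}{x}$ over the fiber $\pi_\h^{-1}(\delta)$ equals $\drvecx{\delta}{x'}$. Maximizing over $\delta$ then gives $\max_{\gamma\in\Gamma_a}\drvecx{\gamma}{x}=\max_{\delta\in\Lambda_a}\drvecx{\delta}{x'}$, which by induction equals $\varepsilon_a(x')=\varepsilon_a(x)$, proving the $\varepsilon_a$-formula. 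Tracking the $\preceq$-maximal maximizer through \eqref{cart}, its image must be the inductively-supplied maximizer $\delta^{x'}$, while the optimal part of $X$ is read off from the finite poset \eqref{cartdiag}; Proposition \ref{orderpres} is exactly what guarantees these glue to a genuine $\gamma^x\in\Gamma_a=\Lambda_a\times_Y X$ with $\pi_\h(\gamma^x)=\delta^{x'}$. The vector identity $\RR^{\jj}_{\ii}(x'+\rvec{\delta^{x'}})=x+\rvec{\gamma^x}$ is a second local check: off $\h$ the map $\RR^{\jj}_{\ii}$ is the identity and the two $\mathfrak{r}$-vectors agree, and on the hexagon one compares \eqref{tlmutb} with \eqref{Rdef} in each part; combined with the induction hypothesis this yields $f_a x-x=\rvec{\gamma^x}$. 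Finally Remark \ref{reinlat} reduces $\gamma^x\in\mathcal{R}_a$ to $\delta^{x'}\in\mathcal{R}_a$ (induction) plus avoidance of $\Gamma^t$ when $t\le a$ and of $\Gamma^s$ when $s\ge a+1$, which the $\preceq$-maximal maximizer is seen to satisfy, whence $\max_{\mathcal{R}_a}\drvecx{\gamma}{x}=\max_{\Gamma_a}\drvecx{\gamma}{x}=\varepsilon_a(x)$.

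The main obstacle is precisely the local analysis at the hexagon: checking that the piecewise-linear relation \eqref{tlmutb} matches the combinatorial comparison of $\drvecx{\gamma}{x}$ and $\rvec{\gamma}$ across the six parts. This forces a split into the three orderings $s<a<t$, $a<t<s$, $t<s<a$ permitted by $[a,s]<_a[a,t]$, each producing different signs $\overline{\epsilon}_a$ on the hexagon tiles, and one must confirm that the $\min$ in \eqref{tlmutb} is resolved exactly by the optimal choice of part in \eqref{cartdiag}. This case bookkeeping is the heart of the proof; the Cartesian-square formalism of Proposition \ref{orderpres} is what keeps it finite and lets the induction close.
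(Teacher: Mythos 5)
Your proposal follows the paper's proof essentially verbatim: the same induction on $\#\mathcal{W}_a$, the same base case via Definition \ref{def:crystal}, the same hexagon flip from Lemma \ref{hexexists}, the reduction to the three identities (transport of the $\mathfrak{r}$-vector through $\RR^{\jj}_{\ii}$, fiberwise equality of the maxima of $\drvec{\gamma}$, and the Reineke property via the Cartesian square of Proposition \ref{orderpres}), and the same three-case split $a<t<s$, $t<s<a$, $s<a<t$. The explicit hexagon computations you defer are exactly the displays \eqref{fdel1}--\eqref{fdel2c} in the paper, and you correctly identify the delicate points there (resolution of the $\min$, and exclusion of $\Gamma^s$ or $\Gamma^t$ for the maximal maximizer), so the outline is sound.
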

\begin{proof}
We proceed by induction on $\#\mathcal{W}_a$. For $\#\mathcal{W}_a=1$ the statement follows by the definition of $f_a$ and $\varepsilon_a$ given in Definition \ref{def:crystal}. Assuming now $\#\mathcal{W}_a\geq 2$ by Lemma \ref{hexexists} there exists a hexagon $\h=\{[a,s], [a,t], [s,t]\}\subset \mathcal{W}_a$. We assume that $[a,s]<_a[a,t]$ and denote by $\mathcal{S}=\mathcal{T}_{\jj}$ the tiling obtained from by $\mathcal{T}$ by flipping $\h$. The reduced word $\jj$ is obtained from $\ii$ by the braid move corresponding to the flip of $\h$ (see Section \ref{sec:crystal}).
Further by (\ref{eq:combs}) $\mathcal{V}_a:=\cl{(a, a+1)}\subset\mathcal{S}$ satisfies $\#\mathcal{V}_a < \#\mathcal{W}_a$. 

We write $\Lambda_a$ for the set of $a$-crossings in $\mathcal{S}$ and define $\Gamma^-$, $\Gamma^{s}$, $\Gamma^{st}$, $\Gamma^{t}$, $\Gamma^{ts}$, $\Gamma^+$, $X$, $\Lambda^-$, $\Lambda^{s}$, $\Lambda^{t}$, $\Lambda^+$, $Y$ as in Definition \ref{quots}. Since \eqref{cartdiag} is Cartesian we can identify $\Gamma_a$ with $\Lambda_a \times_Y X$. 
Writing with the notations of \eqref{tlmutb}
$$
y=(y_S)_{S\in\mathcal{S}}:=\RR^{\ii}_{\jj} x
$$
by the induction hypothesis there exists a unique $\preceq$-maximal element
$\lambda^y\in\Lambda_a$ satisfying
\begin{equation*}
\varepsilon_a(y)=\drvecxd{\lambda^y}{y}=\max_{\lambda\in\Lambda_a} \drvecx{\lambda}{y}.
\end{equation*}
Let $\gamma^x\in \Gamma_a$ be a $\preceq$-maximal element satisfying \eqref{Fvcond}. Since a neighbour sequence is determined by its associated strip the vector $\rvec{\gamma}\in\mathbb{Z}^{\mathcal{T}}$ determines $\gamma\in\Gamma_a$.

By Definition \ref{def:crystal} we have $\varepsilon_a(y)=\varepsilon_a(x)$ and $f_a x=\RR^{\jj}_{\ii} f_a\RR^{\ii}_{\jj} x$. Thus using the induction hypothesis it is enough to show \begin{align}\label{ts1}
\RR^{\jj}_{\ii} \left(y+\rvecd{\lambda^y}\right)&=x+\rvecd{\gamma^x},\\\label{ts2}
\forall \lambda\in\Lambda_a \,:\, \max_{\gamma\in\pi^{-1} (\lambda)} \drvecxd{\gamma}{x} &= \drvecxd{\lambda}{y},\\\label{ts3}
\gamma^x&\in\mathcal{R}_a.
\end{align}
We consider the three cases $1) \ a<t<s$, $2) \ s<a<t$ and $3) \ t<s<a$ separately.

\underline{First case:} We first assume $a<t<s$. Then by (\ref{tlmutb}) we have for $\mm{s}{t}:=x_{[s,t]}$, $\nn{s}{t}:=y_{[s,t]}$ and
$\gamma\in \Gamma_a$
\begingroup\makeatletter\def\f@size{10.5}\check@mathfonts
\begin{equation}\label{fdel1}
\drvecxd{\gamma}{x} - \drvecxd{\pi\gamma}{y} =\begin{cases}0&\gamma\in\Gamma^-,\\
\mm{a}{s} - \nn{a}{t}=\min (0, \nn{s}{t}-\nn{a}{t})&\gamma\in\Gamma^{st},\\
\mm{a}{s}+\mm{a}{t}-\mm{s}{t}-\nn{a}{t} =  \min (0, \nn{a}{t}-\nn{s}{t})
&\gamma\in\Gamma^{t},\\
\mm{a}{s}-\mm{s}{t}-\nn{a}{t}-\nn{a}{s}=-\mm{s}{t}-\mm{a}{t}&\gamma\in\Gamma^{s},\\
\mm{a}{s}+\mm{a}{t}- \nn{a}{t}-\nn{a}{s}=0&\gamma\in\Gamma^{ts}\cup \Gamma^+.\end{cases}
\end{equation}
\endgroup
Since $\pi$ is surjective,
$\pi(\lambda, \Gamma^s)= \pi(\lambda, \Gamma^{ts})$  and $\pi(\lambda, \Gamma^t)= \pi(\lambda, \Gamma^{st})$
we obtain from \eqref{fdel1} the equality \eqref{ts2} and
\begin{equation}
\label{maxmax}
\drvecxd{\gamma^x}{x}=
\max_{\gamma\in\Gamma_a} \drvecxd{\gamma}{x} = \max_{\lambda\in\Lambda_a} \drvecxd{\lambda}{y} = \drvecxd{\lambda^y}{y}.
\end{equation}
Writing $\gamma^x=(\lambda^x, \Gam{x})\in \Lambda_a \times_Y X = \Gamma_a$ we obtain from \eqref{ts2} and \eqref{maxmax}
\begin{equation}
\label{pigMi}
\lambda^x  = \pi \gamma^x \preceq \lambda^y.
\end{equation}
By \eqref{ts2} and \eqref{maxmax} we can choose $\Gam{y}\in X$ maximal with $\drvecx{(\lambda^y, \Gam{y})}{x} = \drvecx{\gamma^x}{x}$. Then, by $\Gamma^{s} \prec \Gamma^{ts}$ and \eqref{fdel1}, we have $\Gam{y} \neq \Gamma^s$.
Since $X-\{\Gamma^s\}$ is linearly ordered we conclude from the $\preceq$-maximality of $\gamma^x$, \eqref{Fdef2}  and Proposition \ref{orderpres} that $\Gamma^x=\Gam{y}$. Thus, using Proposition \ref{orderpres} again and \eqref{pigMi} we obtain
$\gamma^x \preceq (\lambda^y, \Gam{y})$. By the $\preceq$-maximality of $\gamma^x$
we conclude $\gamma^x = (\lambda^y, \Gam{y})$. Using the induction hypothesis, $\gamma^x = (\lambda^y, \Gam{y})$ implies \eqref{ts3}. Furthermore, we obtain
\begin{equation}
\label{pigM}\pi \gamma^x=\lambda^y.\end{equation}

By (\ref{tlmutb}) we have
\begin{equation}
\label{fdel2}
\RR^{\jj}_{\ii} \left(y+ \rvecd{\lambda^y} \right) - x = \begin{cases}
\rvecd{\left(\lambda^y, \Gamma^{st}\right)} &\text{if $\lambda^y\in\Lambda^t  \wedge \nn{a}{t} < \nn{s}{t},$}\\
\rvecd{\left(\lambda^y, \Gamma^{t}\right)} &\text{if $\lambda^y\in\Lambda^t  \wedge \nn{a}{t} \geq \nn{s}{t},$}\\
\rvecd{\left(\lambda^y, \Gamma^{ts} \right)} &\text{if $\lambda^y\in \Lambda^s,$}\\
\rvecd{\left(\lambda^y, \Gamma^{-} \right)} &\text{if $\lambda^y\in \Lambda^-,$}\\
\rvecd{\left(\lambda^y, \Gamma^{+} \right)} &\text{if $\lambda^y\in \Lambda^+.$}\end{cases}
\end{equation}
If $\lambda^y\notin\Lambda^t$ then $\#\pi^{-1} (\lambda^y) \cap \mathcal{R}_a=1$ and \eqref{ts1} follows from \eqref{ts3}, \eqref{pigM} and \eqref{fdel2}.

If $\lambda^y\in\Lambda^t$, we have
$\pi^{-1} (\lambda^y) = \{(\lambda^y, \Gamma^{st}), (\lambda^y, \Gamma^t)
\}$. Furthermore, from Proposition \ref{orderpres} we obtain $(\lambda^y, \Gamma^{st}) \preceq (\lambda^y, \Gamma^t)$. Thus, \eqref{ts1} follows from \eqref{fdel1}, \eqref{pigM}, \eqref{fdel2} and \eqref{maxmax}.

\underline{Second Case:} We next consider the case $t<s<a$. By (\ref{tlmutb}), we have for $\mm{s}{t}:=x_{[s,t]}$, $\nn{s}{t}:=y_{[s,t]}$ and
$\gamma\in \Gamma_a$
\begin{equation}
\label{fdel1b}
\drvecxd{\gamma}{x} - \drvecxd{\pi\gamma}{y} =\begin{cases}0&\gamma\in\Gamma^- \cup \Gamma^{st},\\
-\mm{s}{t}-\mm{s}{a}&\gamma\in\Gamma^{t},\\
-\mm{s}{t}+\nn{t}{a} =  \min (0, \nn{s}{a}-\nn{s}{t})
&\gamma\in\Gamma^{s},\\
-\mm{s}{a}+\nn{t}{a} =  \min (0, \nn{s}{t}- \nn{s}{a})&\gamma\in\Gamma^{ts}\\
-\mm{s}{a} - \mm{t}{a} + \nn{t}{a} + \nn{s}{a}=0&\gamma\in\Gamma^+. \end{cases}
\end{equation}
Since $\pi$ is surjective, $\pi(\lambda, \Gamma^s)= \pi(\lambda, \Gamma^{ts})$ and $\pi(\lambda, \Gamma^t)= \pi(\lambda, \Gamma^{st})$
we obtain from \eqref{fdel1b} the equality \eqref{ts2} and
\begin{equation}
\label{maxmaxb}
\drvecxd{\gamma^x}{x}=
\max_{\gamma\in\Gamma_a} \drvecxd{\gamma}{x} = \max_{\lambda\in\Lambda_a} \drvecxd{\lambda}{y} = \drvecxd{\lambda^y}{y}.
\end{equation}
Writing $\gamma^x=(\lambda^x, \Gamma^x)\in \Lambda_a \times_Y X = \Gamma_a$ we obtain from \eqref{ts2} and \eqref{maxmaxb}
\begin{equation}
\label{pigMib}
\lambda^x  = \pi \gamma^x \preceq \lambda^y.
\end{equation}
By \eqref{ts2} and \eqref{maxmaxb} we can choose $\Gam{y}\in X$ maximal with $\drvecx{(\lambda^y, \Gam{y})}{x} = \drvecx{\gamma^x}{x}$. Then $\Gam{y} \neq \Gamma^t$, since
otherwise by \eqref{fdel1b} we have $y_{a,t}=\min(x_{a,s}, x_{s,t})=0$ and using the induction hypothesis we obtain the contradiction
$f_a y=y+\rvec{\lambda^y}\notin \mathbb{N}^{\mathcal{S}}.$
Since $X-\{\Gamma^t\}$ is linearly ordered we conclude from the $\preceq$-maximality of $\gamma^x$, \eqref{Fdef2}  and Proposition \ref{orderpres} that $\Gamma^x=\Gam{y}$. Thus, using Proposition \ref{orderpres} again and \eqref{pigMib} we obtain
$\gamma^x \preceq (\lambda^y, \Gam{y})$. By the $\preceq$-maximality of $\gamma^x$
we conclude $\gamma^x = (\lambda^y, \Gam{y})$. Using the induction hypothesis, $\gamma^x = (\lambda^y, \Gam{y})$ implies \eqref{ts3}. Furthermore, we obtain
\begin{equation}
\label{pigMb}\pi \gamma^x=\lambda^y.\end{equation}

By (\ref{tlmutb}) we have
\begin{equation}
\label{fdel2b}
\RR^{\jj}_{\ii} \left(y + \rvecd{\lambda^y} \right) - x = \begin{cases}
\rvecd{\left(\lambda^y, \Gamma^{ts}\right)} &\text{if $\lambda^y\in\Lambda^s  \wedge \nn{s}{a} \leq \nn{s}{t},$}\\
\rvecd{\left(\lambda^y, \Gamma^{s}\right)} &\text{if $\lambda^y\in\Lambda^s  \wedge \nn{s}{a} > \nn{s}{t},$}\\
\rvecd{\left(\lambda^y, \Gamma^{st} \right)} &\text{if $\lambda^y\in \Lambda^t,$}\\
\rvecd{\left(\lambda^y, \Gamma^{-} \right)} &\text{if $\lambda^y\in \Lambda^-,$}\\
\rvecd{\left(\lambda^y, \Gamma^{+} \right)} &\text{if $\lambda^y\in \Lambda^+.$}\end{cases}
\end{equation}

If $\lambda^y\notin\Lambda^s$ then $\#\pi^{-1} (\lambda^y) \cap \mathcal{R}_a=1$ and \eqref{ts1} follows from \eqref{ts3}, \eqref{pigMb} and \eqref{fdel2b}.

If $\lambda^y\in\Lambda^s$, we have
$\pi^{-1} (\lambda^y) = \{(\lambda^y, \Gamma^s), (\lambda^y, \Gamma^{ts})
\}$. Furthermore, from Proposition \ref{orderpres} we obtain $(\lambda^y, \Gamma^{s}) \preceq (\lambda^y, \Gamma^{ts})$.
Thus, \eqref{ts1} follows from \eqref{fdel1b}, \eqref{pigMb}, \eqref{fdel2b} and \eqref{maxmaxb}.

\underline{Third Case:} We finally consider the case $s<a<t$. In this case \eqref{ts3} follows directly from the induction hypothesis. Furthermore, by (\ref{tlmutb}) we have for $\mm{s}{t}:=x_{[s,t]}$, $\nn{s}{t}:=y_{[s,t]}$ and
$\gamma\in \Gamma_a$
\begin{equation}
\label{fdel1c}
\drvecxd{\gamma}{x} - \drvecxd{\pi\gamma}{y} =\begin{cases}0&\gamma\in\Gamma^-,\\
\mm{s}{t}-\nn{a}{t} =  \min \left(0, \nn{s}{a}-\nn{a}{t}\right)
 &\gamma\in \Gamma^{st} \cup \Gamma^s,\\
-\mm{s}{a}+\mm{a}{t} + \mm{s}{t} - \nn{a}{t} = \min \left(0, \nn{a}{t}-\nn{s}{a}\right) & \gamma\in\Gamma^{t} \cup \Gamma^{ts},\\
-\mm{s}{a} + \mm{a}{t} - \nn{a}{t} + \nn{s}{a}=0&\gamma\in\Gamma^+. \end{cases}
\end{equation}
Since $\pi$ is surjective, $\pi(\lambda, \Gamma^s)= \pi(\lambda, \Gamma^{ts})$  and $\pi(\lambda, \Gamma^t)= \pi(\lambda, \Gamma^{st})$
we obtain from \eqref{fdel1c} the equality \eqref{ts2} and
\begin{equation}
\label{maxmaxc}
\drvecxd{\gamma^x}{x}=
\max_{\gamma\in\Gamma_a} \drvecxd{\gamma}{x} = \max_{\lambda\in\Lambda_a} \drvecxd{\lambda}{y} = \drvecxd{\lambda^y}{y}.
\end{equation}
Writing $\gamma^x=(\lambda^x, \Gamma^x)\in \Lambda_a \times_Y X = \Gamma_a$ we obtain from \eqref{ts2} and \eqref{maxmaxc}
\begin{equation}
\label{pigMic}
\lambda^x  = \pi \gamma^x \preceq \lambda^y.
\end{equation}
By \eqref{ts2} and \eqref{maxmaxc} we can choose $\Gam{y} \in X$ maximal with $\drvecx{(\lambda^y, \Gam{y})}{x} = \drvecx{\gamma^x}{x}$. Then $\{\Gam{y}, \Gamma^x\} \neq \{\Gamma^s, \Gamma^t\}$, since
otherwise by \eqref{fdel1c} we obtain $\nn{s}{a}=\nn{a}{t}$ and the contradiction $\drvecx{(\lambda^y, \Gamma^{ts})}{x} = \drvecx{\gamma^x}{x}$ to $\Gamma^s, \Gamma^t < \Gamma^{st}$.
Since $X-\{\Gamma^s\}$ and $X-\{\Gamma^t\}$ are linearly ordered we conclude from the $\preceq$-maximality of $\gamma^x$, \eqref{Fdef2}  and Proposition \ref{orderpres} that $\Gamma^x=\Gam{y}$. 
Thus, using Proposition \ref{orderpres} and \eqref{pigMic} we obtain
$\gamma^x \preceq (\lambda^y, \Gam{y})$ and by the $\preceq$-maximality of $\gamma^x$
we conclude
\begin{equation}
\label{pigMc}\pi \gamma^x=\lambda^y.\end{equation}

By (\ref{tlmutb}) we have
\begin{equation}
\label{fdel2c}
\RR^{\jj}_{\ii} \left(y + \rvecd{\lambda^y} \right) - x = \begin{cases}
\rvecd{\left(\lambda^y, \Gamma^{ts}\right)} &\text{if $\lambda^y\in\Lambda^s  \wedge \nn{s}{a} \leq \nn{a}{t},$}\\
\rvecd{\left(\lambda^y, \Gamma^{s}\right)} &\text{if $\lambda^y\in\Lambda^s  \wedge \nn{s}{a} > \nn{s}{t},$}\\
\rvecd{\left(\lambda^y, \Gamma^{t} \right)} &\text{if $\lambda^y\in \Lambda^t \wedge \nn{s}{a} \leq \nn{a}{t},$}\\
\rvecd{\left(\lambda^y, \Gamma^{st} \right)} &\text{if $\lambda^y\in \Lambda^t \wedge \nn{s}{a} > \nn{a}{t},$}\\
\rvecd{\left(\lambda^y, \Gamma^{-} \right)} &\text{if $\lambda^y\in \Lambda^-,$}\\
\rvecd{\left(\lambda^y, \Gamma^{+} \right)} &\text{if $\lambda^y\in \Lambda^+.$}\end{cases}
\end{equation}
If $\lambda^y\notin\Lambda^s \cup \Lambda^t$ then $\#\pi^{-1} (\lambda^y) =1$ and \eqref{ts1} follows from \eqref{pigMc} and \eqref{fdel2c}.

If $\lambda^y\in\Lambda^s$, we have
$\pi^{-1} (\lambda^y) = \{(\lambda^y, \Gamma^s), (\lambda^y, \Gamma^{ts})
\}$. Furthermore, from Proposition \ref{orderpres} we obtain $(\lambda^y, \Gamma^{s}) \preceq (\lambda^y, \Gamma^{ts})$. Thus, 
\eqref{ts1} follows from \eqref{fdel1c}, \eqref{pigMc}, \eqref{fdel2c} and \eqref{maxmaxc}.

If $\lambda^y\in\Lambda^t$, we have
$\pi^{-1} (\lambda^y) = \{(\lambda^y, \Gamma^{st}), (\lambda^y, \Gamma^{t})
\}$. Furthermore, from Proposition \ref{orderpres} we obtain $(\lambda^y, \Gamma^{st}) \preceq (\lambda^y, \Gamma^{t})$. Thus, 
\eqref{ts1} follows from \eqref{fdel1c}, \eqref{pigMc}, \eqref{fdel2c} and \eqref{maxmaxc}.
\end{proof}

For $x\in\Z^{\mathcal{T}}$ we define $[x]^-:=-(\min(x_T,0))_{T\in\mathcal{T}}\in\N^{\mathcal{T}}$. Then  we have the following proposition.
\begin{prop}\label{prop:Reineke} For $\gamma\in\mathcal{R}_a$ and $x=[\rvec{\gamma}]^-$ we have
$$
\gamma^{x} = \gamma.
$$
\end{prop}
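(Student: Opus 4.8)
The plan is to deduce $\gamma^{x}=\gamma$ from Theorem \ref{formula1} by showing that, for $x=[\rvec{\gamma}]^-$, the crossing $\gamma$ is itself the $\preceq$-maximal element of $\Gamma_a$ maximizing $\drvecx{\cdot}{x}$. First I would record the shape of $x$: it is $\{0,1\}$-valued with $x_T=1$ exactly on the \emph{decreasing turning tiles} of $\gamma$, i.e. those $T=[s_i,s_{i+1}]$ with $s_{i+1}<s_i$. Feeding this into \eqref{Fdef2}, the subtracted sum over $T\in\gamma$ with $\overline{\epsilon}_a(T)=-1$ and $\rvecds{\gamma}_T=0$ vanishes (on such $T$ one has $\rvecds{\gamma}_T=0$, hence $x_T=0$), so $\drvecx{\gamma}{x}$ equals the number of decreasing turning tiles of $\gamma$ that straddle $a$ (the ``high-to-low'' transitions). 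For an arbitrary $\gamma'\in\Gamma_a$, discarding its nonnegative subtracted sum and using that $x$ is bounded by the indicator of the decreasing turning tiles of $\gamma$ gives $\drvecx{\gamma'}{x}\le\drvecx{\gamma}{x}$. Thus $\gamma$ is a maximizer (for \emph{any} $\gamma$, Reineke or not), and therefore $\gamma\preceq\gamma^{x}$. The Reineke hypothesis enters only in the remaining step, the $\preceq$-maximality.

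For maximality I would induct on $\#\mathcal{W}_a$, following the scheme of the proof of Theorem \ref{formula1}. The base case $\#\mathcal{W}_a=1$ is immediate. For the inductive step, choose by Lemma \ref{hexexists} a hexagon $\h=\{[a,s],[a,t],[s,t]\}\subset\mathcal{W}_a$ with $[a,s]<_a[a,t]$, flip it to obtain $\mathcal{S}$ with $\#\mathcal{V}_a<\#\mathcal{W}_a$, and set $\lambda:=\pi_\h(\gamma)\in\Lambda_a$; by Remark \ref{reinlat}, $\lambda\in\mathcal{R}_a$ and $\gamma$ satisfies the class restriction recorded there. The crux, which I expect to be the main obstacle, is the compatibility
$$\RR^{\ii}_{\jj}\bigl([\rvec{\gamma}]^-\bigr)=[\rvec{\lambda}]^-.$$
Granting it and writing $y:=\RR^{\ii}_{\jj}x=[\rvec{\lambda}]^-$, the induction hypothesis applied in $\mathcal{S}$ gives $\lambda^{y}=\lambda$, and the argument of Theorem \ref{formula1} (equations \eqref{pigM}, \eqref{pigMb}, \eqref{pigMc}) yields $\pi_\h(\gamma^{x})=\lambda^{y}=\lambda=\pi_\h(\gamma)$. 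Hence $\gamma^{x}$ and $\gamma$ lie in the same fibre $\pi_\h^{-1}(\lambda)$.

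To prove the compatibility I would argue in the three geometric situations $a<t<s$, $s<a<t$, $t<s<a$ of Theorem \ref{formula1}. Outside $\h$ the map $\RR^{\ii}_{\jj}$ is the identity and $\rvec{\gamma}$, $\rvec{\lambda}$ agree under the identification of tiles, so only the three hexagon coordinates must be checked. There $[\rvec{\gamma}]^-$ is $\{0,1\}$-valued, with values dictated by which of $\Gamma^-,\Gamma^s,\Gamma^{st},\Gamma^t,\Gamma^{ts},\Gamma^+$ contains $\gamma$; one evaluates the piecewise-linear rule \eqref{tlmutb} on these inputs and compares with the hexagon values of $[\rvec{\lambda}]^-$ prescribed by the class $f\circ p_X(\gamma)\in Y$. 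The point is that the Reineke restriction of Remark \ref{reinlat} excludes precisely the classes ($\Gamma^t$ when $t\le a$, $\Gamma^s$ when $s\ge a+1$) for which the naive matching of the two negative parts would fail; on the surviving classes the $\min$'s in \eqref{tlmutb} collapse and the equality holds term by term.

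Finally, on the fibre $\pi_\h^{-1}(\lambda)$, which has at most two elements, I would pin down $\gamma^{x}=\gamma$ using the explicit formulas \eqref{fdel2}, \eqref{fdel2b}, \eqref{fdel2c}: with $\lambda^{y}=\lambda$ and the hexagon values of $y=[\rvec{\lambda}]^-$ now known from the compatibility, these select a single class in $X$, and the Reineke constraint on $\gamma$ forces that class to be the one of $\gamma$. (As a consistency check, since $f_a x=x+\rvec{\gamma^{x}}\ge 0$ while $x$ is supported on the decreasing turning tiles of $\gamma$, the decreasing turning tiles of $\gamma^{x}$ are contained in those of $\gamma$, which already rules out the ``wrong'' element of the fibre in most cases.) As a neighbour sequence is determined by its associated strip sequence, $\rvec{\gamma^{x}}=\rvec{\gamma}$ gives $\gamma^{x}=\gamma$, completing the induction. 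The genuinely delicate ingredient is thus the single compatibility statement that taking negative parts of Reineke vectors intertwines the transition maps $\RR^{\ii}_{\jj}$; the rest is bookkeeping on top of Theorem \ref{formula1}.
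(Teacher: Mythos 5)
Your opening step --- showing that $\gamma$ itself maximizes $\drvecx{\cdot}{x}$ for $x=[\rvec{\gamma}]^-$ by bounding $\drvecx{\gamma'}{x}$ above by the number of decreasing turning tiles of $\gamma$ that straddle $a$ --- is correct, and is in fact a cleaner observation than anything in the paper's proof (which instead reformulates the claim as $f_a[\rvec{\gamma}]^-=[\rvec{\gamma}]^-+\rvec{\gamma}$ and never isolates this). The gap is in the step you yourself flag as the crux: the compatibility $\RR^{\ii}_{\jj}\bigl([\rvec{\gamma}]^-\bigr)=[\rvec{\pi_\h\gamma}]^-$ is \emph{false} for some Reineke crossings, and your explanation of why the Reineke condition rescues it is not right. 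Remark \ref{reinlat} excludes $\Gamma^{t}$ (when $t\le a$) and $\Gamma^{s}$ (when $s\ge a+1$); it does \emph{not} exclude $\Gamma^{st}$ or $\Gamma^{ts}$, and these are exactly where the matching breaks. Concretely, take $a<t<s$ and $\gamma\in\Gamma^{st}\cap\mathcal{R}_a$: then $x_{[a,s]}=x_{[a,t]}=0$ and $x_{[s,t]}=1$ (the turn $a\to s$ is increasing, the turn $s\to t$ is decreasing), and \eqref{tlmutb} applied to the hexagon, whose labels in increasing order are $a<t<s$, gives $\bigl(\RR^{\ii}_{\jj}x\bigr)_{[s,t]}=x_{[s,t]}+x_{[a,s]}-\min\bigl(x_{[a,t]},x_{[s,t]}\bigr)=1$; but $\pi_\h\gamma$ turns directly from $\li^a$ onto $\li^t$ at $[a,t]$ with an increasing turn, so $[\rvec{\pi_\h\gamma}]^-$ vanishes on the whole hexagon. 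The two sides disagree at $[s,t]$.

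This is not a repairable bookkeeping slip within your scheme: once $y:=\RR^{\ii}_{\jj}x\neq[\rvec{\pi_\h\gamma}]^-$, the induction hypothesis says nothing about $y$, so you cannot conclude $\lambda^{y}=\pi_\h\gamma$, and the reduction to the fibre $\pi_\h^{-1}(\lambda)$ collapses. The paper's proof acknowledges exactly this dichotomy: it verifies your identities (\ref{eq:1})--(\ref{eq:2}) only for $\gamma\in\Gamma^{-}\cup\Gamma^{s}\cup\Gamma^{t}\cup\Gamma^{+}$, for $\Gamma^{st}$ with $t<s<a$, and for $\Gamma^{ts}$ with $a<t<s$; for the remaining classes ($\Gamma^{st}$ with $a<t<s$ or $s<a<t$, and $\Gamma^{ts}$ with $t<s<a$ or $s<a<t$) it substitutes a different argument, namely the estimate $\drvecx{\lambda}{\RR^{\ii}_{\jj}[\rvec{\gamma}]^-}\le\drvecxd{\lambda}{[\rvec{\pi\gamma}]^-}+c$ for all $\lambda\in\Lambda_a$, with $c\in\{0,1\}$ depending on the case and with equality at $\lambda=\pi\gamma$, which combined with the induction hypothesis and the Crossing Formula applied in $\mathcal{S}$ still yields $f_a\RR^{\ii}_{\jj}[\rvec{\gamma}]^-=\RR^{\ii}_{\jj}[\rvec{\gamma}]^-+\rvec{\pi\gamma}$, and hence the claim after applying $\RR^{\jj}_{\ii}$. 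You would need to supply this (or an equivalent) extra argument for the exceptional classes before your induction closes.
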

\begin{proof}
By the Crossing Formula (Theorem \ref{formula1}) the statement is equivalent to
\begin{equation}\label{allkash}
f_a[\rvec{\gamma}]^-= [\rvec{\gamma}]^- + \rvec{\gamma}.
\end{equation}
We prove \eqref{allkash} by induction on $\#\mathcal{W}_a$. For $\#\mathcal{W_a}=1$ the claim is true by the definition of $f_a$ given in Definition \ref{def:crystal}. Assuming now $\#\mathcal{W}_a\geq 2$ by Lemma \ref{hexexists} there exists a hexagon $\h=\{[a,s], [a,t], [s,t]\}\subset \mathcal{W}_a$. We assume that $[a,s]<_a[a,t]$ and denote by $\mathcal{S}=\mathcal{T}_{\jj}$ the tiling obtained from by $\mathcal{T}$ by flipping $\h$. 
The reduced word $\jj$ is obtained from $\ii$ by the braid move corresponding to the flip of $\h$ (see Section \ref{sec:crystal}).
Further by (\ref{eq:combs}) $\mathcal{V}_a:=\cl{(a, a+1)}\subset\mathcal{S}$ satisfies $\#\mathcal{V}_a < \#\mathcal{W}_a$. 

We write $\Lambda_a$ for the set of $a$-crossings in $\mathcal{S}$ and define $\Gamma^-$, $\Gamma^{s}$, $\Gamma^{st}$, $\Gamma^{t}$, $\Gamma^{ts}$, $\Gamma^+$, $X$, $\Lambda^-$, $\Lambda^{s}$, $\Lambda^{t}$, $\Lambda^+$, $Y$ as in Definition \ref{quots}.

We first assume that either $\gamma \in \Gamma^{s}\cup \Gamma^{t}\cup \Gamma^- \cup \Gamma^+$ or $t<s<a$ and $\gamma \in \Gamma^{st}$ or $a<t<s$ and $\gamma \in \Gamma^{ts}$. One verifies case by case that
\begin{align} \label{eq:1}
\RR^{\ii}_{\jj} \left(\left[\rvec{\gamma}\right]^-\right)& = \left[\rvec{\pi\gamma}\right]^-, \\ \label{eq:2}
\RR^{\jj}_{\ii} \left(\left[\rvec{\pi\gamma}\right]^- + \rvec{\pi\gamma}\right) &= \left[\rvec{\gamma}\right]^- + \rvec{\gamma}.
\end{align}
Since by definition $f_a= \RR^{\jj}_{\ii} f_a \RR^{\ii}_{\jj}$ we conclude
$$f_a \left[\rvec{\gamma} \right]^-=\RR^{\jj}_{\ii}\left(f_a\left[\rvec{\pi\gamma}\right]^-\right)=\RR^{\jj}_{\ii}\left(\left[\rvec{\pi\gamma}\right]^- + \rvec{\pi\gamma}\right) = \rvec{\gamma}+\left[\rvec{\gamma}\right]^-,$$
where the first equality follows from (\ref{eq:1}), the second equality follows from the induction hypothesis and the third equality from (\ref{eq:2}).

We now consider the remaining cases and fix $\gamma \in \widetilde{\Gamma}$ where
$$\widetilde{\Gamma}=\begin{cases} \Gamma^{st} & \text{ if }a<t<s \ \vee \ s<a<t, \\
\Gamma^{ts} & \text{ if } t<s<a \ \vee \ s<a<t.
\end{cases}$$
In these cases we have for all $\lambda \in \Lambda_a$
$$ \drvecx{\lambda}{\RR^{\ii}_{\jj}\left[\rvec{\gamma}\right]^-} \le
\begin{cases}
\drvecxd{\lambda}{\left[\rvec{\pi\gamma}\right]^-}+1 & \text{ if }s<a<t \ \vee \gamma \in \Gamma^{ts}, \\
\drvecxd{\lambda}{\left[\rvec{\pi\gamma}\right]^-} & \text{else,}
\end{cases}$$
with equality for $\lambda=\pi \gamma$. Thus, using the induction hypothesis and the Crossing Formula we obtain
\begin{equation}\label{kashremcases}
f_a \RR^{\ii}_{\jj}\left[\rvec{\gamma}\right]^-  = \RR^{\ii}_{\jj}\left[\rvec{\gamma}\right]^-  + \rvec{\pi\gamma}.
\end{equation}
Using \eqref{kashremcases} and the definition of $f_a$ one directly computes
$$f_a [\rvec{\gamma}]^-= \RR^{\jj}_{\ii}\left(f_a \RR^{\ii}_{\jj} [\rvec{\gamma}]^-\right)
=\RR^{\jj}_{\ii}\left(\RR^{\ii}_{\jj}
\left[\rvec{\gamma}\right]^- + \rvec{\pi\gamma}\right)= \left[\rvec{\gamma}\right]^-+\rvec{\gamma}.$$
\end{proof}
By the Crossing Formula (Theorem \ref{formula1}) and Proposition \ref{prop:Reineke} we obtain the following.
\begin{thm}\label{thm:Reinekevectors}
We have
$$\mathbf{R}_a=\mathbf{R}_a(\mathcal{T}):=\{f_a x -x \mid x \in \mathbb{N}^{\mathcal{T}} \}=\{\rvec{\gamma} \mid \gamma \in \mathcal{R}_a\}.$$
\end{thm}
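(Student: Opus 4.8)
The plan is to deduce the stated equality of sets by proving the two inclusions separately, using directly the two results just established; essentially all of the combinatorial work has already been carried out in Theorem~\ref{formula1} and Proposition~\ref{prop:Reineke}, so the present statement should come out as a clean repackaging of them.

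For the inclusion $\mathbf{R}_a \subseteq \{\rvec{\gamma} \mid \gamma \in \mathcal{R}_a\}$, I would take an arbitrary $\ii$-Lusztig datum $x \in \mathbb{N}^{\mathcal{T}}$ and apply the Crossing Formula. It produces the $\preceq$-maximal crossing $\gamma^x \in \Gamma_a$ realizing the maximum in \eqref{Fvcond} and asserts both $f_a x - x = \rvec{\gamma^x}$ and $\gamma^x \in \mathcal{R}_a$. Hence every Reineke vector $f_a x - x$ is of the form $\rvec{\gamma}$ with $\gamma \in \mathcal{R}_a$, which gives this inclusion.

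For the reverse inclusion $\{\rvec{\gamma} \mid \gamma \in \mathcal{R}_a\} \subseteq \mathbf{R}_a$, I would fix a Reineke crossing $\gamma \in \mathcal{R}_a$ and set $x := [\rvec{\gamma}]^-$. By construction of $[\,\cdot\,]^-$ this lies in $\mathbb{N}^{\mathcal{T}}$, so it is a genuine $\ii$-Lusztig datum. Proposition~\ref{prop:Reineke} then gives $\gamma^x = \gamma$, and feeding $x$ back into the Crossing Formula yields $f_a x - x = \rvec{\gamma^x} = \rvec{\gamma}$. Thus $\rvec{\gamma}$ is realized as $f_a x - x$ for an honest Lusztig datum, i.e.\ $\rvec{\gamma} \in \mathbf{R}_a$.

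Since both inclusions are immediate consequences of the cited results, I do not expect a genuine obstacle at this stage. The only point deserving a word of justification is that $x = [\rvec{\gamma}]^-$ is admissible as a Lusztig datum, which is automatic because $[\,\cdot\,]^-$ takes values in $\mathbb{N}^{\mathcal{T}}$ by definition; the substantive content---that this particular $x$ re-selects $\gamma$ as its $\preceq$-maximal crossing---is exactly what Proposition~\ref{prop:Reineke} supplies.
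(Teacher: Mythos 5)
Your proof is correct and follows exactly the route the paper intends: the inclusion $\mathbf{R}_a\subseteq\{\rvec{\gamma}\mid\gamma\in\mathcal{R}_a\}$ is the assertion $\gamma^x\in\mathcal{R}_a$ in Theorem \ref{formula1}, and the reverse inclusion is Proposition \ref{prop:Reineke} applied to $x=[\rvec{\gamma}]^-$ combined with the Crossing Formula. The paper states the theorem as an immediate consequence of these two results without spelling out the inclusions, so your write-up is simply a more explicit version of the same argument.
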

\begin{defi}\label{def:reineke}
The subset $\mathbf{R}_a \subset \mathbb{Z}^{\mathcal{T}}$ is called the set of \emph{$a$-Reineke vectors}.
\end{defi}

\begin{rem}\label{zelrem} For reduced words $\ii$ adapted to quivers of type $A$, the set of vectors of the form $\{f_a x -x \mid x \in \mathbb{N}^{\mathcal{T}_{\ii}} \}$ was already studied in the work of Zelikson (\cite{Ze}) in the setup of pseudoline arrangements. Here the term Lusztig moves was used and the set of such moves was shown to give defining inequalities of a certain string cone (see Remark \ref{zelremark}).
\end{rem}
The following result can be proved analogously to the Crossing Formula.
\begin{thm}\label{formula2}
	For $a\in[n-1]$ and $x\in \N^{\mathcal{T}}$ with $\varepsilon_a(x)\geq 1$ we have
\begin{equation*}
e_a x+x=\rvec{\gamma_x},
\end{equation*} where $\gamma_x$ is the $\preceq$-minimal element in $\Gamma_a$ satisfying
	\begin{equation*}
	\drvecxd{\gamma_x}{x} = \max_{\gamma\in\Gamma_a} \drvecx{\gamma}{x}.
	\end{equation*}
\end{thm}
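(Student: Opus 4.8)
The plan is to establish the identity by the same induction on $\#\mathcal{W}_a$ that drives the Crossing Formula (Theorem \ref{formula1}), replacing $f_a$ by $e_a$ and the $\preceq$-maximal maximizer by the $\preceq$-minimal one; concretely I would prove that $e_ax = x - \rvec{\gamma_x}$ for the $\preceq$-minimal $\gamma_x\in\Gamma_a$ maximizing $\gamma\mapsto\drvecx{\gamma}{x}$. For $\#\mathcal{W}_a=1$ the set $\Gamma_a=\{(a,a+1)\}$ is a single crossing (at once $\preceq$-minimal and $\preceq$-maximal), $\rvec{(a,a+1)}$ is the unit vector at $[a,a+1]$, and the claim reduces to the defining formula for $e_a$ in Definition \ref{def:crystal}. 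For the inductive step I would choose a hexagon $\h=\{[a,s],[a,t],[s,t]\}\subset\mathcal{W}_a$ as in Lemma \ref{hexexists} with $[a,s]<_a[a,t]$, flip it to obtain $\mathcal{S}=\mathcal{T}_{\jj}$ (so $\#\mathcal{V}_a<\#\mathcal{W}_a$ by \eqref{eq:combs}), put $y=\RR^{\ii}_{\jj}x$, and exploit $e_a=\RR^{\jj}_{\ii}\circ e_a\circ\RR^{\ii}_{\jj}$ together with the inductive description $e_ay=y-\rvec{\lambda_y}$, where $\lambda_y\in\Lambda_a$ is the $\preceq$-minimal maximizer.

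A key observation is that most of the work done for Theorem \ref{formula1} is blind to whether one extremizes via the maximum or the minimum, and can be reused verbatim. The Cartesian diagram \eqref{cart} of distributive lattices (Proposition \ref{orderpres}) identifies $\Gamma_a=\Lambda_a\times_Y X$, and the local computations \eqref{fdel1}, \eqref{fdel1b}, \eqref{fdel1c} of the offset $\drvecx{\gamma}{x}-\drvecx{\pi\gamma}{y}$ in the three cases $a<t<s$, $t<s<a$, $s<a<t$ continue to apply. In each case this offset is nonpositive on every $X$-stratum, so $\gamma$ is a maximizer precisely when $\pi\gamma$ is and the offset vanishes; the identities \eqref{maxmax}, \eqref{maxmaxb}, \eqref{maxmaxc} giving $\max_{\gamma\in\Gamma_a}\drvecx{\gamma}{x}=\max_{\lambda\in\Lambda_a}\drvecx{\lambda}{y}=\varepsilon_a(x)=\varepsilon_a(y)$ then hold unchanged, and show that $\pi$ carries the maximizers of $\Gamma_a$ onto those of $\Lambda_a$.

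The genuinely new input is the selection inside a fiber of $\pi$. By the Cartesian order description \eqref{cartpos}, $\gamma_1\preceq\gamma_2$ iff $\pi\gamma_1\preceq\pi\gamma_2$ and the $X$-components satisfy $\xi_1\le\xi_2$; hence the $\preceq$-minimal maximizer $\gamma_x$ projects under $\pi$ to the $\preceq$-minimal maximizer $\lambda_y\in\Lambda_a$ and, inside $\pi^{-1}(\lambda_y)$, takes the $\preceq$-least $X$-component with vanishing offset. Where the proof of Theorem \ref{formula1} selected the $\preceq$-largest such component (producing $\Gamma^t$, resp.\ $\Gamma^{ts}$, on the branches over $\Lambda^t$, resp.\ $\Lambda^s$), I would now select the $\preceq$-smallest, i.e.\ $\Gamma^{st}$, resp.\ $\Gamma^s$; using that $X\setminus\{\Gamma^s\}$ and $X\setminus\{\Gamma^t\}$ are linearly ordered, this least element is pinned down by the signs of the same differences $\nn{a}{t}-\nn{s}{t}$, $\nn{s}{a}-\nn{s}{t}$, $\nn{s}{a}-\nn{a}{t}$ that govern \eqref{fdel2}, \eqref{fdel2b}, \eqref{fdel2c}, with the inequalities reoriented for the minimum. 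I would then close each case by verifying $\RR^{\jj}_{\ii}(y-\rvec{\lambda_y})-x=-\rvec{\gamma_x}$ from the flip formula \eqref{tlmutb}, matching the value of $\lambda_y\in\{\Lambda^-,\Lambda^t,\Lambda^s,\Lambda^+\}$ to the corresponding $\gamma_x$.

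The main obstacle will be precisely this fiber selection when $\pi^{-1}(\lambda_y)$ contains two maximizers, namely over $\Lambda^t$ (where $\Gamma^{st}\prec\Gamma^t$) or over $\Lambda^s$ (where $\Gamma^s\prec\Gamma^{ts}$): one must check that choosing the $\preceq$-least maximizer is compatible with applying the piecewise-linear map $\RR^{\jj}_{\ii}$ to the inductive output $y-\rvec{\lambda_y}$, and in particular that in the degenerate equality cases (such as $\nn{a}{t}=\nn{s}{t}$, or the vanishing of the relevant entries of $x$) the two distinct candidate crossings yield the same vector, hence the same $e_ax$. Finally, one should record that the maximizers form a sublattice of $\Gamma_a$, so that a unique $\preceq$-minimal maximizer exists; by the Cartesian structure this again reduces to the singleton base case.
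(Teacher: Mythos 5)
Your proposal is correct and is essentially the paper's own proof: the paper disposes of Theorem \ref{formula2} with the single remark that it ``can be proved analogously to the Crossing Formula,'' and your plan -- the same induction on $\#\mathcal{W}_a$ via Lemma \ref{hexexists}, the same Cartesian lattice \eqref{cart} and offset computations \eqref{fdel1}, \eqref{fdel1b}, \eqref{fdel1c}, with the $\preceq$-maximal fiber selection replaced by the $\preceq$-minimal one and the threshold inequalities in \eqref{fdel2}--\eqref{fdel2c} reoriented -- is exactly that analogy carried out, including the correct identification of where new checking is needed. (You also read the displayed identity in the only sensible way, namely $x-e_ax=\rvec{\gamma_x}$; the ``$e_ax+x$'' in the statement is a sign slip in the paper.)
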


\subsection{Dual Crossing Formula and highest weight crystals}\label{dualcrossing1}

Let $\mathcal{T}_{\ii}$ be the tiling associated to the reduced word $\ii$ of $w_0$ as defined in Section \ref{wordtiling}. For $\lambda$ a dominant integral weight of $\sln(\mathbb{C})$, the crystal graph of highest weight crystal $B(\lambda)$ is a full subgraph of $B(\infty)$ (with weights appropriately shifted). In this section we give an explicit description of the ${}^*$-crystal structure on $\mathbb{N}^{\mathcal{T}_{\ii}}$ and the explicit embedding of the highest weight crystals into $\mathbb{N}^{\mathcal{T}_{\ii}}$ where ${}^*$ denotes the Kashiwara involution introduced in Section \ref{Kashiwarainvolution}.

We introduce the dual notions of those used in the Crossing Formula (Theorem \ref{formula1}).
Using the notation of Definition \ref{Gidef} we define the set ${\Gamma}_a^*$ of dual $a$-crossings as the set of $n+a$-ascending neighbour sequences $(\gamma_i)_{i\in [m]}\subset \mathcal{T}_{\ii}$ starting at $\gamma_1=\li^a_{n-1}$ and ending at $\gamma_m=\li^{a+1}_{n-1}$.

We introduce a relation $\preceq^*$ on $\Gamma_a^*$ as follows. Since $\leq_{n+a}$ defined in Definition \ref{s-order} is anti-symmetric, $\gamma\in \Gamma_a^*$ cannot contain cycles. Consequently, the set $\mathcal{T}\setminus \{T \mid T \in \gamma\}$ is partitioned into the set of tiles lying on the left of $\gamma$ and the set of tiles lying on the right of $\gamma$.
We denote the set consisting of those $T\in\mathcal{T}$ which do not lie left of $\gamma$ by$\cl{\gamma}^*$ and define for $\gamma, \lambda \in \Gamma_a^*$
\begin{align*}
\op{\gamma}^*&:=\cl{\gamma}^*-\gamma,\\
\gamma \preceq^* \lambda &:\Leftrightarrow
(\cl{\gamma}^*\subseteq \cl{\lambda}^*) \wedge
(\op{\gamma}^* \subseteq \op{\lambda}^*).
\end{align*}
The proof of Proposition \ref{isorder} shows that $\preceq^*$ is an order relation on $\Gamma^*$.

To $\gamma\in\Gamma_a^*$ with strip sequence $(s_1=a, s_2, \dots, s_M=a+1)$ (see \eqref{ssdef}) we  associate $\rvec{\gamma} \in\mathbb{Z}^{\mathcal{T}_{\ii}}$ and $\drvec{\gamma}\in\Hom(\Z^{\mathcal{T}_{\ii}}, \Z)$ by \eqref{Rdef} and \eqref{Fdef2} respectively.

\begin{defi}
We say $\gamma\in\Gamma_a^*$ is a \emph{dual Reineke $a$-crossing} if it satisfies the following condition: For any $\gamma_i=[ s,t]$ such that $\gamma_{i-1}, \gamma_i$ and $\gamma_{i+1}$ lie in the same strip sequence $\mathcal{L}^s$ we have
\begin{align*} s > t & \quad \text{if }t\le a \\
s<t & \quad \text{if }t \geq a+1. \end{align*}
We denote the set of all dual Reineke $a$-crossings by $\mathcal{R}_a^*.$
\end{defi}

We obtain the following dual Crossing Formula.
\begin{thm}[Dual Crossing Formula]\label{dualcrossing}
Let $\ii$ be a reduced word, $a\in [n-1]$ and $x\in \mathbb{N}^{\mathcal{T}_{\ii}}$. Then we have
\begin{align*}
f^*_a x-x&=\rvec{\gamma^x},\\
e^*_a x-x&=\begin{cases} {\rvec{\gamma_x}} & \text{if $\varepsilon_a^* (x) \geq 1$ ,}\\ 0&\text{else,} \end{cases}\\
\end{align*}
where $\gamma^x$ (resp. $\gamma_x$) is the $\preceq^*$-maximal (resp. minimal) element $\tilde{\gamma}$ in $\Gamma_a^*$ satisfying
\begin{equation*}
\drvecxd{\tilde{\gamma}}{x} = \max_{\gamma\in\Gamma^*_a} \drvecx{\gamma}{x}.
\end{equation*}
Furthermore, we have $\gamma^x, \gamma_x\in\mathcal{R}^*_a$ and
$$
\varepsilon^*_a (x)  = \max_{\gamma\in\Gamma^*_a} \drvecx{\gamma}{x} = \max_{\gamma\in\mathcal{R}^*_a} \drvecx{\gamma}{x}.
$$
Moreover we have
$$\{\rvec{\gamma} \mid \gamma \in \mathcal{R}^{*}_a\}=\{f^*_a x - x \mid x \in \mathbb{N}^{\mathcal{T}_{\ii}} \}.$$
\end{thm}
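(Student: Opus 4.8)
The plan is to deduce the dual statement from the Crossing Formula (Theorem \ref{formula1}), from Theorem \ref{formula2}, Proposition \ref{prop:Reineke} and Theorem \ref{thm:Reinekevectors}, by exploiting the symmetry of the regular $2n$-gon $P_0$ that interchanges the distinguished bottom vertex $v_0$ with the antipodal top vertex $v_{[n]}$. Concretely, let $\iota$ be the point reflection of $P_0$. Since opposite edges of the $2n$-gon are parallel and hence carry the same label, $\iota$ preserves all edge labels and therefore induces, for every tiling $\mathcal{T}$, a label-preserving bijection of tiles $\mathcal{T}\to\iota\mathcal{T}$ and an identification $\iota_*\colon \mathbb{N}^{\mathcal{T}}\to\mathbb{N}^{\iota\mathcal{T}}$. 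As $\iota$ shifts the boundary index used to define the partition \eqref{eq:kappa} by $n$, it carries $s$-ascending sequences to $(s+n)$-ascending ones and exchanges the first and last tile of every strip, $\iota(\li^s_1)=(\iota\li)^s_{n-1}$. Consequently $\iota$ restricts to a bijection $\Gamma_a(\mathcal{T})\to\Gamma_a^*(\iota\mathcal{T})$ carrying $\mathcal{R}_a$ to $\mathcal{R}_a^*$, and since the definitions \eqref{Rdef}, \eqref{Fdef2} of $\rvec{\gamma}$ and $\drvecx{\gamma}{x}$ depend only on the strip sequence and tile labels, they are intertwined by $\iota_*$.

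The crucial structural input is that $\iota_*$ intertwines the $*$-crystal on $\mathbb{N}^{\mathcal{T}}$ with the ordinary crystal on $\mathbb{N}^{\iota\mathcal{T}}$, that is $\iota_*\circ f_a^*=f_a\circ\iota_*$ and similarly for $e_a^*,\varepsilon_a^*$. I would verify this from the inductive definitions (Definition \ref{def:crystal} and Definition \ref{starop}): the base case of $f_a^*$ occurs when $[a,a+1]$ is the $\le_{\ii}$-maximal (top) tile, which under $\iota$ becomes the bottom tile and hence the base case of the ordinary $f_a$ on $\iota\mathcal{T}$, both adding $1$ in the coordinate $[a,a+1]$; and both operators are extended to all tilings through the \emph{same} transition maps $\RR^{\ii}_{\jj}$, which commute with $\iota_*$ because the tropical formula \eqref{tlmutb} is phrased purely in terms of tile labels and is unaffected by $\iota$. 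Granting this, applying Theorem \ref{formula1}, Theorem \ref{formula2} and Theorem \ref{thm:Reinekevectors} to the tiling $\iota\mathcal{T}_{\ii}$ and transporting the outputs back along $\iota_*$ would yield every assertion of Theorem \ref{dualcrossing}, with $\gamma^x=\iota(\delta)$ for the corresponding ordinary extremal crossing $\delta$ on $\iota\mathcal{T}_{\ii}$.

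The step I expect to be the main obstacle is matching the two order relations, i.e. checking that the extremal dual crossing produced this way is genuinely $\preceq^*$-maximal (resp. minimal) as asserted, and not the reverse. This is delicate because $\iota$ is orientation-\emph{preserving}, so it carries ``left of $\gamma$'' to ``left of $\iota\gamma$'', whereas $\preceq^*$ is built from $\cl{\gamma}^*=$ ``tiles not to the \emph{left}'', the opposite half-space to the one defining $\preceq$. To settle this I would argue intrinsically on $\mathcal{T}_{\ii}$ rather than relying on $\iota$ to transport the order: the relation $\preceq^*$ is precisely the order obtained by measuring crossings from the top vertex, and I would prove the dual analogue of Proposition \ref{orderpres} directly, showing that $\Gamma_a^*$ is a distributive lattice fitting into a Cartesian square \eqref{cart} with the dual data $\Lambda_a^*$, $X$, $Y$. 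This lets the induction on $\#\cl{(a,a+1)}^*$ be run verbatim as in the proof of Theorem \ref{formula1}, using a dual version of Lemma \ref{hexexists} to locate a hexagon adjacent to the top. With the lattice structure and the correct extremum secured, the three cases $a<t<s$, $s<a<t$, $t<s<a$ of the tropical analysis together with the dual of Proposition \ref{prop:Reineke} then deliver $\gamma^x,\gamma_x\in\mathcal{R}_a^*$, the formula for $\varepsilon_a^*$, and the identity $\{\rvec{\gamma}\mid\gamma\in\mathcal{R}_a^*\}=\{f_a^*x-x\mid x\in\mathbb{N}^{\mathcal{T}_{\ii}}\}$.
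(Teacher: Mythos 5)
Your proposal follows the paper's own route. The paper introduces the reduced word $\ii^{\starop}=(n-i_N,\dots,n-i_1)$ and the label-preserving bijection of tiles $\Gamma_a(\mathcal{T}_{\ii^{\starop}})\to\Gamma_a^*(\mathcal{T}_{\ii})$, $\gamma\mapsto\gamma^{\diamond}$; since the point reflection $\iota$ carries the sweep from the left boundary defining $\le_n$ to the sweep from the right boundary defining $\le_{2n}$, one has $\iota(\mathcal{T}_{\ii^{\starop}})=\mathcal{T}_{\ii}$, so $\gamma\mapsto\gamma^{\diamond}$ is exactly your $\iota$, the identity $f_a^*x=(f_a x^{\diamond})^{\diamond}$ and the compatibility of $\rvec{\gamma}$ and $\drvec{\gamma}$ are checked just as you indicate, and Theorems \ref{formula1}, \ref{thm:Reinekevectors} and \ref{formula2} are then transported.

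The one place you diverge is the order comparison, and there your worry rests on reading ``left/right of $\gamma$'' relative to the direction of traversal. In the definitions of $\cl{\gamma}$ and $\cl{\gamma}^*$ the two sides of $\gamma$ are distinguished by which boundary arc of $P_0$ they contain: for an ordinary crossing, anchored on the left boundary, ``left of $\gamma$'' is the component containing the left-boundary arc between the edges labelled $a$ and $a+1$ (the vertex $v_{[a]}$), while for a dual crossing, anchored on the right boundary, ``right of $\gamma$'' is the component containing the corresponding right-boundary arc (the vertex $v_{\{a+1,\dots,n\}}$). Since $\iota$ exchanges these two arcs ($\iota(v_{[a]})=v_{\{a+1,\dots,n\}}$), it carries ``left of $\gamma$'' to ``right of $\iota\gamma$'', hence $\cl{\gamma}$ to $\cl{\iota\gamma}^*$ and $\op{\gamma}$ to $\op{\iota\gamma}^*$; the correspondence is genuinely order-preserving and sends the $\preceq$-maximal achiever to the $\preceq^*$-maximal one. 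A quick test for $n=3$, $\ii=(1,2,1)$, $a=1$: in the tie $x_{[1,2]}=x_{[2,3]}$ the correct output of $f_1^*$ is $x+e_{[1,2]}$, coming from the dual $1$-crossing through all three tiles, and with the reading above that crossing is indeed the $\preceq^*$-larger of the two dual $1$-crossings, whereas the traversal-relative reading would make it minimal. So your fallback of proving a dual Proposition \ref{orderpres} and rerunning the induction of Theorem \ref{formula1} is sound but unnecessary; it would reproduce the entire case analysis that the transport argument is designed to avoid.
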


\begin{proof}
To $\ii=(i_1, \dots, i_N)$ we associate the reduced word $\ii^{\starop}=(i^{\starop}_1, \dots, i^{\starop}_N)$ with $i^{\starop}_{k}=n-i_{N+1-k}$. We denote by $\Gamma_a^*(\mathcal{T}_{\ii})$ the set of dual $a$-crossings in $\mathcal{T}_{\ii}$ and by $\Gamma_a(\mathcal{T}_{\ii^{\starop}})$ the set of $a$-crossings in $\mathcal{T}_{\ii^{\starop}}$.
We note that $\mathcal{T}_{\ii}$ is the unique tiling such that under the identification of tiles with positive roots as in \eqref{eq:roots} the ordering $\le_{\ii^{\starop}}$ is a refinement of $\le_{2n}$ to a total order.  As a consequence, the map 
$$
\Gamma_a \left( \mathcal{T}_{\ii^{\starop}}\right) \rightarrow \Gamma_a^* \left( \mathcal{T}_{\ii}\right), \quad \gamma=([k_1, \ell_1], \dots, [k_m, \ell_m]) \mapsto \gamma^{\diamond}:=([k_1, \ell_1], \dots, [k_m, \ell_m])
$$
is well defined and an order isomorphism.

 For $x\in \Z^{\mathcal{T}_{\ii}}$ let $x^{\diamond}\in \Z^{\mathcal{T}_{\ii^{\starop}}}$ be defined by 
$x^{\diamond}_{[k,\ell]}=x^{\vphantom{\diamond}}_{[k,\ell]}.$ For $y\in\Hom(\Z^{\mathcal{T}_{\ii}}, \Z)$ we denote the map $x\mapsto y(x^{\diamond})$ by $y^{\diamond}$.
Then for $\gamma\in\Gamma_a ( \mathcal{T}_{\ii^{\starop}})$ we have $\rveck{\gamma}^{\diamond}=\rveck{\gamma^{\diamond}}$ and $\drveck{\gamma}^{\diamond}=\drveck{\gamma^{\diamond}}$. Furthermore, by Definition \ref{starop} we have 
$
f_a^* x=\left(f_a x^{\diamond}\right)^{\diamond}.
$
The statement now follows from Theorem \ref{formula1}, Theorem \ref{thm:Reinekevectors} and  Theorem \ref{formula2}.
\end{proof}

\begin{defi}\label{def:dualreineke}
The subset $\mathbf{R}^*_a:=\{f^*_a x - x \mid x \in \mathbb{N}^{\mathcal{T}_{\ii}}\} \subset \mathbb{Z}^{\mathcal{T}_{\ii}}$ is called the set of \emph{dual a-Reineke vectors}.
\end{defi}

By \cite[Proposition 8.2]{Ka} we have the following description of highest weight crystals analogously to \cite[Proposition 7.4]{Rei}.
\begin{prop} Let $\lambda=\sum_{a \in [n-1]}\lambda_a \omega_a$ be a dominant integral weight of $\mathfrak{g}$. The corresponding crystal graph $B(\lambda)$ is the full subgraph of the crystal graph of $\mathbb{N}^{\mathcal{T}_{\ii}}$  given by all Lusztig data $x\in \mathbb{N}^{\mathcal{T}_{\ii}}$ such that $\drvecx{\gamma}{x}\le \lambda_a$ for all $a\in [n-1]$ and for all $\gamma\in\Gamma^*_a$.
\end{prop}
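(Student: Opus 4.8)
The plan is to deduce the statement from Kashiwara's realization of $B(\lambda)$ inside $B(\infty)$, using the Dual Crossing Formula to rewrite the relevant bound. First I would invoke \cite[Proposition 8.2]{Ka}: under the identification of $\mathbb{N}^{\mathcal{T}_{\ii}}$ with $B(\infty)$ afforded by Lusztig's parametrization, the full subgraph realizing $B(\lambda)$ is exactly the set of Lusztig data $x$ with $\varepsilon_a^*(x) \le \lambda_a$ for all $a \in [n-1]$, where $\lambda_a = \langle h_a, \lambda \rangle$ is the coefficient of $\omega_a$ in $\lambda$ (using $\langle h_a, \omega_b\rangle = \delta_{a,b}$). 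It is essential here that the cutting-out condition is phrased via the $*$-string length $\varepsilon_a^*$ and not via $\varepsilon_a$; this is precisely what makes the dual, rather than the original, Crossing Formula the right tool.

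The second and final step is to substitute the value of $\varepsilon_a^*$ supplied by the Dual Crossing Formula (Theorem \ref{dualcrossing}):
$$\varepsilon_a^*(x) = \max_{\gamma \in \Gamma^*_a} \drvecx{\gamma}{x}.$$
Then the single inequality $\varepsilon_a^*(x) \le \lambda_a$ is visibly equivalent to the whole family $\drvecx{\gamma}{x} \le \lambda_a$ ranging over $\gamma \in \Gamma^*_a$, since a maximum over a finite set is bounded by $\lambda_a$ precisely when every member is. Running this equivalence over all $a \in [n-1]$ turns Kashiwara's description into the stated system of inequalities, and since $B(\lambda)$ sits in $B(\infty)$ as a full subgraph, cutting out the vertex set this way automatically recovers the correct induced crystal graph.

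I do not expect a genuine obstacle: once the notions of Section \ref{dualcrossing1} are in place, the dual crossings $\Gamma^*_a$ and the forms $\drvec{\gamma}$ are defined precisely so that Theorem \ref{dualcrossing} computes $\varepsilon_a^*$, so the argument is a one-line substitution. The only point demanding care is the bookkeeping in matching conventions --- confirming that the weight normalization in \cite{Ka} (the shift by $\lambda$ and the pairing with simple coroots) agrees with the indexing $\lambda = \sum_a \lambda_a \omega_a$ used here --- which is exactly the content of the phrase ``analogously to \cite[Proposition 7.4]{Rei}'', where the same reduction is carried out in the quiver-adapted setting.
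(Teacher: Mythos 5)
Your argument is correct and is exactly the route the paper takes: the paper offers no further proof beyond citing \cite[Proposition 8.2]{Ka} (the realization of $B(\lambda)$ inside $B(\infty)$ via the condition $\varepsilon_a^*\le\lambda_a$) and then implicitly substituting $\varepsilon_a^*(x)=\max_{\gamma\in\Gamma_a^*}\drvecx{\gamma}{x}$ from the Dual Crossing Formula (Theorem \ref{dualcrossing}), which is precisely your two-step reduction.
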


\section{Application 1 : The Crossing Formula applied to MV-polytopes}\label{MV}
In this section we apply the Crossing Formula (Theorem \ref{formula1}) to MV-polytopes. By this we obtain a proof of a stronger version of the Anderson-Mirkovi\'c (AM) conjecture. The AM conjecture was originally proved by Kamnitzer in \cite{Kam} and subsequently by Saito in \cite{Sai}.

\subsection{MV polytopes and BZ data}

As in Section \ref{notation} let $\{\alpha_a\}_{a\in [n-1]}$ be the simple roots and $\{\omega_a\}_{a\in [n-1]}$ the fundamental weights. Furthermore denote by $\mathfrak{h}_{\mathbb R}$ the real span of the basis of roots.

Recall the notation $v_S$ for a vertex of a tiling $\mathcal{T}$ from Section \ref{wordtiling}. We denote by $\mathcal{P} ([n])$ the power set of $[n]$ and identify the vertices of a tiling with a subset of  $\mathcal{P} ([n])$ via
\begin{equation}\label{eq:vertices}
v_S \mapsto S.
\end{equation}
Recall that the {chamber weights} of $\mathfrak{g}$ are the elements of $\{\sigma \omega_a \mid \sigma \in W, a\in [n-1]\}$. For $\emptyset \ne S \subset [n]$ we denote the chamber weight $\sum_{s\in S} \epsilon_s$ also by $v_S$, where $\epsilon_s$ was defined in Section \ref{notation}.

We denote by ${\mathcal{P}^o([n])}$ the set of proper subsets of $[n]$. A collection of integers $(\zzz_{S})_{S\in {\mathcal{P}^o([n])}}$ is called a $w_0$-normalized \emph{BZ-datum} if it satisfies 
$$\begin{array}{lll}
\zzz_{w_0 [a]}=0 & \forall a\in [n-1] & (w_0-\text{\emph{normalization}}) \\
\zzz_{\sigma [a]} + \zzz_{\sigma \sigma_a [a]}+\displaystyle\sum_{b\in [n-1]\setminus \{a\}} \langle \alpha_a, \alpha_b \rangle \zzz_{\sigma [b]} \le 0 & \forall \sigma\in W, \forall a\in [n-1] & (\text{\emph{edge inequalities}})
\end{array}$$
$\text{and for each triple }(\sigma,a,b)\in W \times [n-1] \times [n-1]\text{ such that }\sigma \sigma_a > \sigma,$ $\sigma \sigma_b > \sigma$, $a \ne b$, we have
$$\begin{array}{ll}
\zzz_{\sigma \sigma_a [a]}+\zzz_{\sigma \sigma_b [b]}=\min(\zzz_{\sigma [a]}+ \zzz_{\sigma \sigma_a \sigma_b[b]},\zzz_{\sigma [b]}+\zzz_{\sigma \sigma_b \sigma_a [a]}) &  \quad (\text{\emph{tropical Pl\"ucker relations}}).
\end{array}$$
We denote the set of $w_0$-normalized BZ-data by $\BZ$.

For each $(\zzz_{S})_{S\in {\mathcal{P}^o([n])}}\in \BZ$ the polytope
$$
P(\zzz) = \left\{ h \in \mathfrak{h}_\mathbb{R} \mid \left\langle h, v_{S} \right\rangle \geq   \zzz_{S} \quad \forall S \in {\mathcal{P}^o([n])}\right\},
$$
is called a ($w_0$-normalized) \emph{Mirkovi\'c-Vilonen (MV) polytope}. For a reduced word $\ii$ of $w_0$ let $\mathcal{T}_{\ii}$ be the tiling associated to $\ii$ as in Section \ref{wordtiling}. We denote the set of MV polytopes by $\mathcal{MV}$. By \cite{Kam} the set $\mathcal{MV}$ has a crystal structure isomorphic to $B(\infty)$ and for each reduced word $\ii$ the map $[\CA_{\ii}]_{\trop}:\mathcal{MV} \rightarrow \mathbb{Z}_{\ge 0}^{\mathcal{T}_{\ii}}$ induces a crystal isomorphism, where $[\CA_{\ii}]_{\trop}$ is defined as follows. Let $P(\zzz)$ be an MV-polytope with corresponding BZ-datum $\zzz$, then
\begin{align}\label{eq:bzdef}
[\CA_{\ii}]_{\trop} \left(P(\zzz)\right) &=
\left(\zzz_{o(T)} + \zzz_{u(T)} - \zzz_{\ell(T)} - \zzz_{r(T)}\right)_{T\in\mathcal{T}_{\ii}},
\end{align}
where for $T=[s,t ; S]\in\mathcal{T}_{\ii}$ in the notation of Section \ref{wordtiling} we set
\begin{align}\label{oben}
u(T)&:=S,&
o(T)&:=S \cup\{s,t\},\\ \notag
\ell(T)&:=S\cup  \{\min(s,t)\},&
r(T)&:=S\cup \{\max(s,t)\}.
\end{align}
The notation is illustrated in the following picture.
$$\begin{tikzpicture}
 \draw (0.9510565162,1.3090169943)  node[font=\scriptsize, below]{{$u(T)$}} -- (0.3632712640,2.1180339887) node[font=\scriptsize, left] {${\ell(T)}$}
 node[font=\scriptsize, right=0.35cm] {{$T$}} --  (0.9510565162,2.9270509831) node[font=\scriptsize, above] {{$o(T)$}}--
(1.5388417685,2.1180339887) node[font=\scriptsize, right] {{{$r(T)$}}}
-- (0.9510565162,1.3090169943);
\end{tikzpicture}$$

\begin{rem} The naming $[\CA_{\ii}]_{\trop}$ comes from the fact that this map is given by the tropicalization of the Chamber Ansatz (see \cite{BFZ}).
\end{rem}

\subsection{The Anderson-Mirkovi\'c conjecture in type $A$}

For $a\in [n-1]$ and $S\subset [n]$ we set $\sigma_a S := (S\backslash \{ a \}) \cup \{ a+1 \}$. In Section \ref{orderlatticesec} we introduced the poset $(\Gamma_a, \preceq)$ of $a$-crossings. By translating $\drvecx{\gamma}{x}$ defined in \eqref{Fdef2} for the $\preceq$-maximal element $\gamma=(a,a+1)\in\Gamma_a$ to the language of BZ-data we obtain:
\begin{thm}\label{sAM}
Let $\zzz,\zzz'$ be BZ-data such that ${f}_a P(\zzz)=P(\zzz')$. Then we have for $S\subset [n]$
\begin{align}\label{sAMform}
\zzz_S - \zzz'_S &= \begin{cases}  1 & \text{if $a\in S$, $a+1\notin S$ and $\zzz_S - \zzz_{\sigma_a S} \geq \zzz_{[a]} - \zzz_{\sigma_a [a]},$} \\ 0 & \text{else.} \end{cases}
\end{align}
Furthermore, if $a\in S$ and $a+1\notin S$, then
$$
\zzz_S - \zzz_{\sigma_a S} \leq \zzz_{[a]} - \zzz_{\sigma_a [a]}.
$$
\end{thm}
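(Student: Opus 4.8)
The plan is to transport the Crossing Formula (Theorem \ref{formula1}) to MV polytopes through the crystal isomorphism $[\CA_{\ii}]_{\trop}$ and then read off the change of the BZ-datum one coordinate at a time. Since $[\CA_{\ii}]_{\trop}$ is a crystal isomorphism, the hypothesis ${f}_a P(\zzz)=P(\zzz')$ translates into $x'={f}_a x$ for $x:=[\CA_{\ii}]_{\trop}(P(\zzz))$ and $x':=[\CA_{\ii}]_{\trop}(P(\zzz'))$, so Theorem \ref{formula1} yields $x'-x=\rvec{\gamma^x}$, where $\gamma^x$ is the $\preceq$-maximal $a$-crossing attaining $\varepsilon_a(x)=\max_{\gamma\in\Gamma_a}\drvecxd{\gamma}{x}$. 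Because the difference $\zzz_S-\zzz'_S$ is intrinsic to the polytopes $P(\zzz),P(\zzz')$, it is independent of the reduced word, so I am free to verify the asserted value for each proper subset $S$ using a reduced word $\ii$ tailored to $S$.

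Fix $S$ with $a\in S$ and $a+1\notin S$ and write $S=S'\cup\{a\}$ with $S'\subseteq[n]\setminus\{a,a+1\}$. I would choose $\ii$ so that the unique $(a,a+1)$-tile of $\mathcal{T}_{\ii}$ equals $T:=[a,a+1;S']$; such a word exists since the crossing of the wires labelled $a$ and $a+1$ can be placed with any prescribed subset of the remaining wires lying below it (via the wire/tile dictionary of \cite{El,DKK}). For this tiling the $\preceq$-maximal crossing is $\gamma=(a,a+1)$, it turns from the strip $\li^a$ to the strip $\li^{a+1}$ exactly at $T$, and $\rvec{(a,a+1)}_T=\sgn(a+1-a)=1$ by \eqref{Rdef}. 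Now \eqref{eq:bzdef} reads $x_T=\zzz_{o(T)}+\zzz_{u(T)}-\zzz_{S}-\zzz_{\sigma_a S}$, where $o(T)=S\cup\{a+1\}$, $u(T)=S'$ and $r(T)=\sigma_a S$. Each of the three chamber weights $o(T),u(T),\sigma_a S$ contains $a+1$ or misses $a$, so all fall under the ``else'' clause; once that clause is known they are unchanged by $f_a$, and comparing the $T$-coordinates of $x$ and $x'=x+\rvec{\gamma^x}$ isolates $\zzz_S-\zzz'_S=\rvec{\gamma^x}_T\in\{0,1\}$. This coordinate equals $1$ precisely when $\gamma^x=(a,a+1)$, i.e. when the $\preceq$-largest crossing $(a,a+1)$ itself attains $\varepsilon_a(x)$.

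To license the previous step I would first dispose of the else case: for $S$ with $a\notin S$ or $a+1\in S$ I pick a word exposing $v_S$ and check that $v_S$ is incident to no $a\to a+1$ turn of $\gamma^x$, whence the relevant combination of $\rvec{\gamma^x}$ through \eqref{eq:bzdef} vanishes and $\zzz_S=\zzz'_S$. It then remains to rewrite the condition ``$(a,a+1)$ attains $\varepsilon_a(x)$'' in BZ-terms. Telescoping $\drvecxd{(a,a+1)}{x}=\sum_{T\in (a,a+1)}\overline{\epsilon}_a(T)\,x_T$ along $\li^a$ followed by $\li^{a+1}$ via \eqref{eq:bzdef} produces an explicit BZ-expression attached to the turning position $S$; matching it against the extremal turning position shows that $(a,a+1)$ attains the maximum exactly when $\zzz_S-\zzz_{\sigma_a S}\ge\zzz_{[a]}-\zzz_{\sigma_a[a]}$, which is the stated case distinction. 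The ``furthermore'' inequality $\zzz_S-\zzz_{\sigma_a S}\le\zzz_{[a]}-\zzz_{\sigma_a[a]}$ I would prove separately and directly from the tropical Pl\"ucker relations, by induction on $|S'|$: adjoining the elements of $[a-1]\setminus S'$ to $S'$ one at a time and checking with a single tropical Pl\"ucker relation at each step that $\zzz_{S'\cup\{a\}}-\zzz_{S'\cup\{a+1\}}$ does not decrease, until $S'=[a-1]$, i.e. $S=[a]$.

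The main obstacle is the BZ-translation of the third paragraph: carrying out the telescoping of $\drvecxd{(a,a+1)}{x}$ and correctly identifying which turning position is extremal, together with the bookkeeping in the main case ensuring that only the single chamber weight $\zzz_S$, and none of its three neighbours $o(T),u(T),\sigma_a S$, is affected. The tropical-Pl\"ucker monotonicity feeding the ``furthermore'' inequality and the existence of the tailored words $\ii$ are the two smaller points, but both are routine once the tropical Pl\"ucker relations and the wire/tile dictionary are in hand.
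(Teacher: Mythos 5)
Your overall strategy is the paper's: transport the Crossing Formula through $[\CA_{\ii}]_{\trop}$, dispose of the case $a\notin S$ or $a+1\in S$ first, then pick a tiling in which the unique tile $[a,a+1]$ is $[a,a+1;S']$ (the paper gets this tiling from strong separation of $S$ and $\sigma_aS$ via \cite[Theorem 9.3]{DKK}), so that only the coordinate $x_{[a,a+1]}$ can move and $\zzz_S-\zzz'_S=\rvecds{\gamma^x}_{[a,a+1]}$. The step you flag as ``the main obstacle'' is where the paper has a short argument you are missing: to identify $\varepsilon_a(x)$ in BZ-coordinates one does \emph{not} need to telescope and compare turning positions inside one tiling; one simply passes to a tiling containing the vertex $v_{\sigma_a[a]}$, where $[a,a+1]$ meets the left boundary in two edges, so $(a,a+1)$ is the \emph{only} $a$-crossing and $\varepsilon_a(x)=x_{[a,a+1]}=\zzz_{[a+1]}+\zzz_{[a-1]}-\zzz_{[a]}-\zzz_{\sigma_a[a]}$. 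Combined with the (genuinely telescoping) identity $\drvecxd{(a,a+1)}{x}=\zzz_S-\zzz_{\sigma_aS}+\zzz_{[a+1]}+\zzz_{[a-1]}-2\zzz_{[a]}$ in the tiling adapted to $S$, the case distinction in \eqref{sAMform} drops out. Without the one-crossing observation your plan has not actually produced the BZ-expression for $\varepsilon_a$, which is the crux.

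For the ``furthermore'' inequality your route is both unnecessary and, as sketched, incomplete. It is unnecessary because the Crossing Formula already gives $\drvecxd{(a,a+1)}{x}\le\varepsilon_a(x)$ for every $x$ (the right-hand side is the maximum over all $a$-crossings), and substituting the two BZ-expressions above turns this verbatim into $\zzz_S-\zzz_{\sigma_aS}\le\zzz_{[a]}-\zzz_{\sigma_a[a]}$; no tropical Pl\"ucker induction is needed. It is incomplete because your induction only \emph{adjoins} the elements of $[a-1]\setminus S'$, but a general $S=S'\cup\{a\}$ may contain elements larger than $a+1$, and to reach $S=[a]$ you must also \emph{remove} the elements of $S'\cap\{a+2,\dots,n\}$; each removal requires its own monotonicity step (again a hexagon-type tropical Pl\"ucker relation, with the roles of the three wires permuted), which your sketch does not address. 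I would recommend replacing that paragraph by the one-line deduction from $\drvecxd{(a,a+1)}{x}\le\varepsilon_a(x)$.
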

\begin{proof}
For any $S\subset[n]$ there exists a tiling $\mathcal{T}$ such that $v_S$ is a vertex of $\mathcal{T}$. If $a\notin S$ or $a+1\in S$ then $v_S$ is a vertex of a tile $T$ which is not in $\mathcal{W}_a$ and using Lemma \ref{hexexists} one finds by induction that
\begin{equation}\label{eq:simple}
\zzz_S - \zzz'_S = 0.
\end{equation}
Thus, \eqref{sAMform} holds if $a\notin S$ or $a+1\in S$.

From now on we assume $a\in S$ and $a+1 \notin S$.
We choose a tiling $\mathcal{T}_{\ii}$ such that $v_{\sigma_a [a]}$ is a vertex of $\mathcal{T}_{\ii}$. Then the tile $[a,a+1]$ has vertices $v_{[a-1]}$, $v_{[a]}$, $v_{[a+1]}$, $v_{\sigma_a [a]}$ and $\gamma=(a,a+1)$ is the only $a$-crossing since $[a,a+1]$ intersects the left boundary of $\mathcal{T}_{\ii}$ with two edges. Thus, using (\ref{eq:bzdef}) we obtain from the Crossing Formula (Theorem \ref{formula1})
\begin{align}\label{epsilonCA}
\varepsilon_a \left(P(\zzz)\right)&=\varepsilon_a \left(\left[\CA_{\ii}\right]_{\trop}\left(P(\zzz)\right)\right) = \drvecxd{\left( a,a+1\right)}{\left[\CA_{\ii}\right]_{\trop} \left(P(\zzz)\right)} = \\ \notag
&=\left( \left[\CA_{\ii}\right]_{\trop}\left(P(\zzz)\right)\right)_{[a,a+1]} = \zzz_{[a+1]} + \zzz_{[a-1]} - \zzz_{[a]} - \zzz_{\sigma_a [a]}.
\end{align}

From  \cite[Theorem 9.3]{DKK} (see also \cite{LZ}) we deduce the existence of a tiling $\mathcal{T}_{\jj}$ such that both $v_S$ and $v_{\sigma_a S}$ are vertices of $\mathcal{T}_{\jj}$ (since $S$ and $\sigma_a S$ form a strongly separated collection). For $T=[a,a+1]\in\mathcal{T}_{\jj}$ we have in the notation \eqref{oben} that $\ell(T)=S$, $r(t)=\sigma_a S$, $u(T)=S \backslash \{a\}$, $o(T)=S\cup \{a+1\}$. Thus, by (\ref{eq:simple}) we have $\zzz_{o(T)}-\zzz_{o(T)}'=\zzz_{u(T)}-\zzz_{u(T)}'=\zzz_{r(T)}-\zzz_{r(T)}'=0$.
Setting $x=[\CA_{\jj}]_{\trop} (P(\zzz))\in \mathbb{N}^{\mathcal{T}_{\jj}}$ we obtain from the Crossing Formula that
\begin{equation}\label{maincase}
\zzz_{S}-\zzz_S' = -(x_{[a,a+1]} - {f}_a x_{[a,a+1]}) =
\begin{cases} 1 & \text{if }\drvecxd{\left( a,a+1\right)}{x}  \geq \varepsilon_a(P(\zzz))=\varepsilon_a(x),\\0 &\text{else.} \end{cases}
\end{equation}
Since by the Crossing Formula we have $\drvecx{(a,a+1)}{x}\leq \varepsilon_a(x)$, the claim follows from \eqref{epsilonCA}, \eqref{maincase} and
\begin{equation*}\label{FmaxCA}
\drvecxd{\left(a,a+1\right)}{x}= \zzz_S - \zzz_{\sigma_a S}+
\zzz_{[a+1]} + \zzz_{[a-1]} - 2 \zzz_{[a]}.
\end{equation*}
\end{proof}
We obtain the Anderson-Mirkovi\'c conjecture in type $A$ as an immediate consequence of Theorem \ref{sAM}:
\begin{cor}[{\cite[Corollary 5.6]{Kam}}]
Let $\zzz, \zzz'$ be BZ-data such that ${f}_a P(\zzz)=P(\zzz')$. Then we have for $S\subset [n]$
$$\zzz_S - \zzz'_S = \begin{cases} \max\left(0, \zzz_S - \zzz_{\sigma_a a} + \zzz_{\sigma_a [a]} - \zzz_{[a]} +1 \right) & \text{if $a\in S$ and $a+1\notin S,$}\\ 0 &\text{else.} \end{cases}$$
\end{cor}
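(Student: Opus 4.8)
The plan is to read the corollary off directly from Theorem \ref{sAM}, whose proof already carries all of the geometric content; what is left is the elementary task of collapsing the two-case description in Theorem \ref{sAM} into the single $\max$-expression asserted here.

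First I would dispose of the trivial branch: if $a\notin S$ or $a+1\in S$, the ``else'' clause of Theorem \ref{sAM} gives $\zzz_S-\zzz'_S=0$, which matches the ``else'' clause of the corollary. Hence I may assume $a\in S$ and $a+1\notin S$.

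In that case I would abbreviate
\begin{equation*}
D := \zzz_S - \zzz_{\sigma_a S} + \zzz_{\sigma_a [a]} - \zzz_{[a]} = \left(\zzz_S - \zzz_{\sigma_a S}\right) - \left(\zzz_{[a]} - \zzz_{\sigma_a [a]}\right),
\end{equation*}
so that the target identity reads $\zzz_S - \zzz'_S = \max(0, D+1)$. The decisive input is the ``furthermore'' assertion of Theorem \ref{sAM}, namely $\zzz_S - \zzz_{\sigma_a S}\le \zzz_{[a]}-\zzz_{\sigma_a[a]}$, i.e. $D\le 0$. Because the entries of a BZ-datum are integers, $D\in\Z$, and therefore $\max(0,D+1)$ equals $1$ when $D=0$ and equals $0$ when $D\le -1$, that is, when $D<0$. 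Comparing this with the first clause of Theorem \ref{sAM}, which yields $\zzz_S-\zzz'_S=1$ exactly when $\zzz_S - \zzz_{\sigma_a S}\ge \zzz_{[a]}-\zzz_{\sigma_a[a]}$, i.e. when $D\ge 0$ (forcing $D=0$ together with the bound $D\le 0$), and $\zzz_S-\zzz'_S=0$ otherwise, I would conclude $\zzz_S-\zzz'_S=\max(0,D+1)$, as claimed.

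The only genuinely load-bearing step is the inequality $D\le 0$, and the work of establishing it is already internal to Theorem \ref{sAM}, where it descends from the bound $\drvecx{(a,a+1)}{x}\le\varepsilon_a(x)$ supplied by the Crossing Formula. Without this upper bound the quantity $\max(0,D+1)$ could a priori exceed $1$, which would contradict the fact that a single Kashiwara operator changes each coordinate $\zzz_S$ by at most one; it is precisely this bound that turns the two-valued answer of Theorem \ref{sAM} into the compact $\max$-formula.
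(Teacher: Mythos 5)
Your proposal is correct and matches the paper's intent exactly: the paper derives the corollary as an ``immediate consequence'' of Theorem \ref{sAM} without writing out the details, and your argument (reducing to $D\le 0$ from the ``furthermore'' clause, then using integrality to identify the two-case description with $\max(0,D+1)$) is precisely that omitted elementary computation. Note only that the displayed formula in the corollary contains a typo, $\zzz_{\sigma_a a}$ for $\zzz_{\sigma_a S}$, which you have silently and correctly repaired.
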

\section{Application 2: A duality between Lusztig's and Kashiwara's parametrization}\label{string}
Let $\mathcal{T}_{\ii}$ be the tiling associated to the reduced word $\ii$ for $w_0$ as in Section \ref{wordtiling}. In Definition \ref{def:dualreineke} we defined the set $\mathbf{R}_a^*=\{f_a^* x - x \mid x \in \mathbb{N}^{\mathcal{T}_{\ii}}\}$ of dual $a$-Reineke vectors associated $\ii$ where $^*$ is the Kashiwara involution on the set of $\ii$-Lusztig data (see Section \ref{Kashiwarainvolution}). Let $\mathbb{G}_m$ be the multiplicative group. In this section we associate to $\mathbf{R}_a^*$ a function $r_a$ on the torus $\TTis:=\GGi$ and show that it transforms under a certain geometric lift of the transition maps between the string parametrizations defined by Berenstein and Zelevinsky in \cite{BZ2}.
 As a consequence we obtain that the cone corresponding to Kashiwara's string parametrization of the dual canonical basis is polar to the cone spanned by $\mathbf{R}_a^*$.

\subsection{String parametrizations}\label{sparas}
We recall the \emph{string parametrization} of the dual canonical basis $\mathbb{B}^{\text{dual}}$ corresponding to the reduced word $\ii=(i_1, \ldots,i_N)$. The parametrizing set
is the set of $\mathbf i$-string data of elements of the crystal $B(\infty)$. An \emph{$\mathbf i$-string datum} $\stp_{\ii }(\bbb)$ of $\bbb\in B(\infty)$ is a tuple $\stp_{\ii }(\bbb)=(x_1, \ldots, x_N)\in \N^N$ defined inductively as follows.
\begin{align*}
x_1 &= \max\left\{ k \in \N \mid {e}_{i_1}^k \bbb \ne 0\right\}, \\
x_2 &= \max\left\{ k \in \N \mid {e}_{i_2}^{k}{e}_{i_1}^{x_1} \bbb \ne 0\right\}, \\
& \vdots \\
x_N &= \max\left\{ k \in \N \mid {e}_{i_N}^k {e}_{i_{N-1}}^{x_{N-1}}\cdots {e}_{i_1}^{x_1} \bbb \ne 0 \right\}.
\end{align*}
By \cite{BZ}, \cite{Lit} the set
\begin{equation}\label{scdef}
\stc_{\ii}:=
\left\{\stp_{\mathbf i}(\bbb) \mid \bbb \in B(\infty)\right\}\subset \N^N
\end{equation}
is a polyhedral cone called the \emph{string cone associated to $\ii$}.

We equip the cone $\stc_{\ii}$ with a crystal structure following \cite{NZ,Ka2}. First we define for $a\in[n-1]$ the operator ${f}_a$ on $\mathbb{Z}^N$ as follows. We set for $x=(x_1, \ldots, x_N)\in \mathbb{Z}^N$ and $a\in[n-1]$
\begin{align*}
\nu_j(x)&:=x_j+ \sum_{ j < t\leq n-1}\left<h_{i_j},\alpha_{i_t}\right>x_t,\\
\nu^{(a)}(x)&:=\max\{\nu_j(x) \mid j\in[N], i_j=a \}.
\end{align*}
We let $j_{0}^{(a)}$ be the smallest $j'$ such that $\nu_{j'}(x)=\nu^{(a)}(x)$ and define
\begin{equation}\label{eq:stringop}
{f}_a x:=\left(x_k+\delta_{k,j_0}\right)_{k\in[N]}\in \mathbb{Z}^N.
\end{equation}
By \cite{NZ} the map
$$B(\infty)\hookrightarrow \mathbb{Z}_{\ge 0}^N\subset \mathbb{Z}^N, \quad \bbb \mapsto \stp_{\ii}(\bbb^*)$$
is an embedding of crystals. Here ${}^*$ is the Kashiwara involution defined in Section \ref{Kashiwarainvolution}.

\begin{ex} Let $n=3$ and $\ii=(1,2,1)$. Recall the associated tiling $\mathcal{T}_{\ii}$:
$$\begin{tikzpicture}
\draw (0.0000000000,0.0000000000)  -- node[midway, below, left]{$1$} (-0.8660254037,0.4999999999) -- node[midway, left]{$2$} (-0.8660254037,1.5000000000) --
(0.0000000000,1.0000000000)-- (0.0000000000,0.0000000000);
\draw (0.0000000000,1.0000000000)  -- (-0.8660254037,1.5000000000) -- node[midway, left]{$3$} (0.0000000000,2.0000000000) --
(0.8660254037,1.5000000000)-- (0.0000000000,1.0000000000);
\draw (0.0000000000,0.0000000000)  -- (0.0000000000,1.0000000000) -- (0.8660254037,1.5000000000) --
(0.8660254037,0.4999999999)-- (0.0000000000,0.0000000000);
\end{tikzpicture}.$$
Writing $x\in {\N}^{\mathcal{T}_{\ii}}$ as $x=(x_1,x_2,x_3)= (x_{[1,2]}, x_{[1,3]}, x_{[2,3]})$ we compute using the dual Crossing formula (Theorem \ref{dualcrossing}), Theorem \ref{formula2} and  \eqref{eq:stringop}:
\begin{align*}
\stp_{\ii}\left(\left( f_2 (0,0,0)\right)^*\right)&=\stp_{\ii}(f_2^* (0,0,0))  = \stp_{\ii} (0,0,1)  = (0,1,0)=f_2 (0,0,0), \\
\stp_{\ii}\left(\left(f_1 {f}_2 (0,0,0)\right)^*\right)&=\stp_{\ii}\left(f_1^* f_2^* (0,0,0)\right)= \stp_{\ii} (0,1,0)   = (0,1,1)=f_1 f_2 (0,0,0),\\
\stp_{\ii}\left( \left( \left(f_1\right)^2 f_2 (0,0,0)\right)^*\right)&=\stp_{\ii} \left( \left(f_1^*\right)^2 f_2^* (0,0,0) \right)= \stp_{\ii} (1,1,0) = (1,1,1) = \left(f_1\right)^2 f_2 (0,0,0). 
\end{align*}
\end{ex}

Analogously to the definition of the set of dual Reineke vectors $\mathbf{R}^*(\mathcal{T}_{\ii})=\bigcup_a \mathbf{R}_a^*(\mathcal{T}_{\ii})$ (see \eqref{darvecs2}) for $\mathcal{T}_{\ii}$
we associate to the string cone $\stc_{\ii}$ the sets
\begin{equation*}
\mathbf{R}_a\left(\stc_{\ii} \right):=\left\{
{f}_a x -x \,\middle|\, x\in \stc_{\ii}\right\}, \qquad \mathbf{R}\left(\stc_{\ii} \right):= \bigcup_a \mathbf{R}_a\left(\stc_{\ii} \right).
\end{equation*}
Using \eqref{eq:stringop} we obtain the following description of $\mathbf{R}_a(\stc_{\ii})$:
\begin{lem}\label{stringreinvecs}
For a reduced word $\ii=(i_1,\dots, i_N)$ and $a\in[n-1]$ we have
$$
\mathbf{R}_a\left(\stc_{\ii} \right)=\left\{e_k:=\left(\delta_{k,j} \right)_{j\in[N]} \,\middle|\, i_k=a \right\}.
$$
\end{lem}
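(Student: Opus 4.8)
The plan is to read off both inclusions directly from the explicit description of the crystal operator $f_a$ on $\Z^N$ recalled in \eqref{eq:stringop}, where for $x=(x_1,\dots,x_N)$ one has $f_a x=(x_k+\delta_{k,j_0})_{k\in[N]}$ with $j_0=j_0^{(a)}(x)$ the smallest index $j$ with $i_j=a$ at which $\nu_j(x)$ attains the value $\nu^{(a)}(x)=\max\{\nu_j(x)\mid i_j=a\}$. The inclusion $\subseteq$ is then immediate: for any $x\in\stc_\ii$ we get $f_a x-x=e_{j_0}$ and by construction $i_{j_0}=a$, so $f_a x-x\in\{e_k\mid i_k=a\}$.

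For the reverse inclusion it suffices to show that the increment-position map $x\mapsto j_0^{(a)}(x)$ is surjective onto $\{k\in[N]\mid i_k=a\}$. Writing the occurrences of $a$ in $\ii$ as $p_1<\dots<p_r$, I would fix a target $p_\ell$ and exhibit $x\in\stc_\ii$ for which $\nu_{p_\ell}(x)$ is the \emph{strict} maximum among $\nu_{p_1}(x),\dots,\nu_{p_r}(x)$; strictness then forces $j_0^{(a)}(x)=p_\ell$. To build such an $x$ I would exploit that in a reduced word for $w_0$ two consecutive occurrences of $a$ are always separated by an occurrence of $a-1$ or $a+1$ (otherwise the two letters $a$ would commute together and cancel, contradicting reducedness). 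Placing large weight on one such neighbouring letter strictly between $p_{\ell'}$ and $p_{\ell'+1}$ for each $\ell'<\ell$ and unit weight at $p_\ell$, the Cartan pairings $\langle h_a,\alpha_{a\pm1}\rangle=-1$ drive every $\nu_{p_{\ell'}}$ with $\ell'<\ell$ far below $\nu_{p_\ell}=1$, while the positions $p_{\ell'}$ with $\ell'>\ell$ contribute $\nu_{p_{\ell'}}=0$; hence $p_\ell$ is the unique maximiser.

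The main obstacle is not producing this $\nu$-profile, which is a short computation, but \emph{guaranteeing that the chosen vector genuinely lies in $\stc_\ii$}: the string cone is cut out by the intricate Gleizer--Postnikov inequalities rather than by the conditions $\nu^{(a)}\ge 0$ alone, so a naively constructed profile could violate one of them. I would handle this in one of two ways. Either I realise the desired profile by an honest crystal element: since $\stp_\ii$ restricts to a bijection $B(\infty)\to\stc_\ii\cap\Z^N$ (see \eqref{scdef}), the string datum of any $\bbb\in B(\infty)$ automatically lies in the cone, and I would choose $\bbb$ so that the neighbouring-letter contributions appear in $\stp_\ii(\bbb)$ with the suppressing effect described above, taking as building blocks the string data of $\tilde f_{a\pm1}$ and $\tilde f_a$ applied to the highest weight vector of $B(\infty)$. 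Alternatively, I observe that $U_\ell:=\{x\mid \nu_{p_\ell}(x)>\nu_{p_{\ell'}}(x)\ \forall\,\ell'\ne\ell\}$ is an open cone and that $\stc_\ii$ is full-dimensional, and reduce the claim to the statement that $U_\ell$ meets $\stc_\ii$, which I would check on the explicit elements just named together with their nonnegative combinations.

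Either route yields surjectivity of $x\mapsto j_0^{(a)}(x)$, and combined with the trivial inclusion $\subseteq$ this gives the asserted equality $\mathbf{R}_a(\stc_\ii)=\{e_k\mid i_k=a\}$. The only genuinely delicate point to write up carefully is the cone-membership verification; the reduced-word separation statement and the $\nu$-computation are routine.
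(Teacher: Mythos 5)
Your first inclusion is correct and is exactly the paper's. The reverse inclusion, however, has a genuine gap. You reduce everything to exhibiting, for each position $p_\ell$ with $i_{p_\ell}=a$, a point of $\stc_{\ii}$ whose $\nu$-profile has $\nu_{p_\ell}$ as strict maximum, and you correctly identify that the whole difficulty is whether the vector you build lies in $\stc_{\ii}$ --- but you then leave precisely that point unestablished. Your route (a) gives no actual recipe: controlling the greedily-computed string datum of a crystal element so that it realizes a prescribed coordinate profile is not routine, and "choose $\bbb$ so that the neighbouring-letter contributions appear with the suppressing effect" is a restatement of what needs to be proved. Your route (b) is logically insufficient: full-dimensionality of $\stc_{\ii}$ does not imply that it meets the open cone $U_\ell$, so you are again thrown back on explicit elements that are never produced. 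Since the string cone is cut out by inequalities beyond nonnegativity (already in rank $2$ one has constraints of the form $x_2\ge x_3$), a vector with large entries at the chosen neighbouring positions and zeros at the $a$-positions can easily violate them; the membership question is not a formality, and as written the proof is incomplete.

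The paper's proof sidesteps all of this by running the argument backwards. Fix $k$ with $i_k=a$. Because $\stc_{\ii}$ is full-dimensional it is not contained in the hyperplane $\{x_k=0\}$, so there is an integral $x\in\stc_{\ii}$ with $x_k>0$; write $x=f_{a_1}\cdots f_{a_M}(0,\dots,0)$ using \eqref{scdef}. By \eqref{eq:stringop} each application of a crystal operator adds exactly one unit vector $e_j$ with $i_j$ equal to the colour of that operator, so coordinate $k$ must be incremented at some step $\ell$; this forces $a_\ell=a$ and exhibits $e_k=f_a y-y$ for the intermediate point $y=f_{a_{\ell+1}}\cdots f_{a_M}(0,\dots,0)$, which lies in $\stc_{\ii}$ automatically because it is itself a string datum. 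No control over \emph{which} $a$-position gets incremented is ever needed, and no hand-built vector has to be checked against the Gleizer--Postnikov inequalities. If you wish to keep your direct construction you would have to verify those inequalities for your explicit vector, which is considerably more work than the lemma requires.
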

\begin{proof}
By \eqref{eq:stringop} we have $\mathbf{R}_a(\stc_{\ii} ) \subset \{e_k | i_k=a\}$. We fix $k\in [N]$ with $i_k=a$. Since $\dim \mathbb{S}_{\ii} = N$, there exists $x=(x_1, \dots, x_N)\in \stc_{\ii}$ with $x_{k}>0$. Writing $x=f_{a_1}  \cdots f_{a_M} (0,\dots, 0)$ we obtain from \eqref{eq:stringop}
$$
e_k \in \left\{ f_a y - y \,\middle|\, a_{\ell}=a, y=f_{a_{\ell+1}} \cdots f_{a_M} \left(0, \dots ,0\right) \right\}\subset \mathbf{R}_a\left(\stc_{\ii}\right).
$$
\end{proof}

\subsection{Geometric liftings}
As in \cite[Chapter 5]{BZ2} we associate to a reduced word $\ii$ tori $\TTil:=\Hom(\Hom(\Z^{\mathcal{T}_{\ii}}, \Z), 
\GG)$, $\TTis:=\Hom(\Z^{\mathcal{T}_{\ii}}, \GG)$ and define maps
\begin{align*}
 \trs^{\ii}_{\jj} &: \TTis \rightarrow \TTisp,  \\
 \trl^{\ii}_{\jj} &: \TTil \rightarrow \TTilp
\end{align*}
as follows.
If {$\jj$} is obtained from {$\ii$} by replacing the subword $(i_k,i_{k+1})$ by $(i_{k+1},i_k)$, where $|i_k - i_{k+1}|>1$ (commutation move), then
\begin{equation}\label{smutc}
(\trs_{\jj}^{\ii} x)_{[s,t]}=(\trl_{\jj}^{\ii} x)_{[s,t]}=x_{[s,t]}.
\end{equation}
If {$\jj$} is obtained from {$\ii$} by a braid move that corresponds to a flip at $\mathcal{H}:=\{[s,t], [s,u], [t,u]\} \subset \mathcal{T}_{\ii}$ with $s<t<u$, we set $y=\trs_{\jj}^{\ii} {x}$, where $y_T=x_T$ for $T\notin \{[s,t], [s,u], [t,u]\}$ and 
\begin{align} \label{smutb}
\begin{split}
y_{[s,t]}&=\frac{x_{[s,t]}x_{[t,u]} + x_{[s,u]}}{x_{[t,u]}} ,\\ 
y_{[s,u]}&=x_{[s,t]}x_{[t,u]},\\ 
y_{[t,u]}&=\frac{x_{[s,u]}x_{[t,u]} }{x_{[s,t]}x_{[t,u]} + x_{[s,u]}}.
\end{split}\end{align}
Furthermore, we set $y=\trl_{\jj}^{\ii} {x}$, where $y_T=x_T$ for $T\notin \{[s,t], [s,u], [t,u]\}$ and
\begin{align} \label{lmutb}
\begin{split}
y_{[s,t]}&=\frac{x_{[s,t]} x_{[s,u]}}{x_{[s,t]}+ x_{[t,u]}},\\ 
y_{[s,u]}&=x_{[s,t]}+x_{[t,u]},\\ 
y_{[t,u]}&=\frac{x_{[s,u]}x_{[t,u]}}{x_{[s,t]}+ x_{[t,u]}}.
\end{split}\end{align}
One checks by direct computation, that the compositions of maps of type $\eqref{smutc}$ and $\eqref{smutb}$ respect the relations of the symmetric group and we thus obtain a definition of $\trs^{\ii}_{\jj}$ for $\ii$, $\jj$ arbitrary. Similarly, we obtain for reduced words $\ii$ and $\jj$ maps $\trl_{\jj}^{\ii}$ by composing maps of type \eqref{smutc} and \eqref{lmutb}.

By \cite[Theorem 2.2]{BZ} the tropicalizations of $\trs_{\jj}^{\ii}$ intertwine the string parametrizations, i.e.
 $$
 \stp_{\ii} = \stp_{\jj} \circ \trod{\trs^{\ii}_{\jj}}.
 $$
Similarly, since the tropicalization of \eqref{lmutb} is given by \eqref{tlmutb}, we have that $\tro{\trl_{\jj}^{\ii}}=\RR_{\jj}^{\ii}$ intertwines the Lusztig parametrizations associated to $\ii$ and $\jj$.

\subsection{Transformation behaviour of Reineke vectors}
In Definition \ref{def:dualreineke} we defined the set of dual $a$-Reineke vectors
\begin{equation}\label{darvecs2}
\mathbf{R}_a^* \left(\mathcal{T}_{\ii}\right)= \left\{f_a^* x - x \,\middle|\, x\in\mathbb{N}^{\mathcal{T}_{\ii}}  \right\}
\end{equation}
associated to a reduced word $\ii$ and $a\in [n-1]$.
With the notation of Section \ref{dualcrossing1} we obtain by the dual Crossing Formula (Theorem \ref{dualcrossing})
\begin{equation}\label{darvecs}
\mathbf{R}_a^* \left(\mathcal{T}_{\ii}\right)= \left\{  \rvecd{\gamma} \,\middle|\, \gamma\in\mathcal{R}_a^* \right\}.
\end{equation}
Similarly, by Lemma \ref{stringreinvecs}  we have $\mathbf{R}_a(\stc_{\ii} ) := \{
{f}_a x -x \,|\, x\in \stc_{\ii} \} =\{e_k | i_k=a\}$.
We define the functions
\begin{align}\label{rpot}
\ra = \rai : \TTis \rightarrow \mathbb{A} &, \quad x \mapsto
\sum_{ y \in \mathbf{R}_a^* \left(\mathcal{T}_{\ii}\right)} x^y,\\\label{lpot}
\ssa = \ssai : \TTil \rightarrow \mathbb{A} &, \quad x \mapsto
\sum_{ y \in \mathbf{R}_a \left(\stc_{\ii}\right)} x^y,\\
\notag
x^y&=\prod_{T\in\mathcal{T}_{\ii}} x_T^{y_T}.
\end{align}

Under a change of reduced words the functions $\ra$ and $\ssa$ transform the following way.
\begin{thm}\label{rtrans}
For $a\in [n-1]$ and reduced words $\ii$ and $\jj$ we have:
\begin{align}\label{eq:rtrans}
\rai&=
 \raj \circ \trs^{\ii}_{\jj},\\\label{eq:ltrans}
\ssai&=
 \ssaj \circ \trl^{\ii}_{\jj}.
\end{align}
\end{thm}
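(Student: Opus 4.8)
The plan is to reduce both identities to the case in which $\jj$ is obtained from $\ii$ by a single elementary move. Since $\trs^{\ii}_{\jj}$ and $\trl^{\ii}_{\jj}$ are defined by composing the elementary maps \eqref{smutc},\eqref{smutb} and \eqref{smutc},\eqref{lmutb} respectively, and since these composites are independent of the chosen sequence of moves (the braid and commutation relations are respected, as noted after \eqref{smutb}), we have $\trs^{\ii}_{\mathbf{k}} = \trs^{\jj}_{\mathbf{k}} \circ \trs^{\ii}_{\jj}$ for any intermediate reduced word; hence the single-move identities compose to the general statement, and likewise for $\trl$ and $\ssa$. For a commutation move the tilings $\mathcal{T}_{\ii}$ and $\mathcal{T}_{\jj}$ coincide and by \eqref{smutc} the maps act as the identity, so both sides agree trivially. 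It therefore remains to treat a single braid move, i.e.\ a flip at a hexagon $\h = \{[s,t],[s,u],[t,u]\}$ with $s<t<u$, where $\trs^{\ii}_{\jj}$ (resp.\ $\trl^{\ii}_{\jj}$) alters only the three coordinates indexed by the tiles of $\h$.

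For \eqref{eq:ltrans} I would argue directly. By Lemma \ref{stringreinvecs} the summands of $\ssai$ are the monomials $x_T$ attached to the tiles $T\in\mathcal{T}_{\ii}$ sitting at a position $k$ of $\ii$ with $i_k=a$, so $\ssai(x)=\sum x_T$ over these tiles. Away from $\h$ the tiles, their associated letters, and the map $\trl^{\ii}_{\jj}$ are all unchanged, so these contributions match verbatim. It remains to compare the three tiles of $\h$: depending on whether $a$ is the first or second color of the braid triple (or neither), exactly one or two of $[s,t],[s,u],[t,u]$ carry the letter $a$, and substituting \eqref{lmutb} one checks the finite identity $\sum_{\text{letter }a} y_T = \sum_{\text{letter }a} x_T$. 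For example, when $a$ is carried by $[s,t]$ and $[t,u]$, the transformed side contributes $y_{[s,u]} = x_{[s,t]}+x_{[t,u]}$, matching on the nose. This is a short case analysis over the position of $a$ among $s,t,u$.

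The identity \eqref{eq:rtrans} is the heart of the theorem. By \eqref{darvecs} the summands of $\rai$ are the monomials $x^{\rvec{\gamma}}=\prod_i x_{[s_i,s_{i+1}]}^{\sgn(s_{i+1}-s_i)}$ indexed by the dual Reineke $a$-crossings $\gamma\in\mathcal{R}_a^*$ with strip sequence $(s_i)$ as in \eqref{ssdef}, so each monomial records precisely the turns of $\gamma$. I would organize the sum using the dual analogue of the surjection $\pi=\pi_{\h}$ of \eqref{combext}, grouping the crossings in $\mathcal{R}_a^*(\mathcal{T}_{\ii})$ by their image $\lambda=\pi(\gamma)\in\mathcal{R}_a^*(\mathcal{T}_{\jj})$; surjectivity and the description of the fibers (of size one or two, exactly as in the proof of Theorem \ref{formula1}) follow from the dual of Remark \ref{reinlat} and Proposition \ref{orderpres}. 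It then suffices to prove, for each $\lambda\in\mathcal{R}_a^*(\mathcal{T}_{\jj})$, the local identity
\[
\sum_{\gamma\in\pi^{-1}(\lambda)\cap\mathcal{R}_a^*} x^{\rvec{\gamma}} \;=\; \trs^{\ii}_{\jj}(x)^{\rvec{\lambda}},
\]
because summing over $\lambda$ recovers $\rai(x)=\raj(\trs^{\ii}_{\jj}(x))$. As $\rvec{\gamma}$ and $\rvec{\lambda}$ agree on all turns outside $\h$ and $\trs^{\ii}_{\jj}$ fixes those coordinates, the common factor cancels and the identity collapses to a computation in the three hexagon variables $x_{[s,t]},x_{[s,u]},x_{[t,u]}$, organized by the dual-crossing classes $\Gamma^-,\Gamma^s,\Gamma^{st},\Gamma^t,\Gamma^{ts},\Gamma^+$ of Definition \ref{quots} and the three cases $a<t<s$, $s<a<t$, $t<s<a$.

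The main obstacle will be this last step: matching, case by case, the one- or two-element fiber sums of dual-crossing monomials against the \emph{rational} map \eqref{smutb}. Unlike \eqref{eq:ltrans}, here the relevant lift is the string map $\trs^{\ii}_{\jj}$ rather than the tropicalization-compatible $\trl^{\ii}_{\jj}$, so the verification genuinely uses the non-trivial denominators of \eqref{smutb}: a two-element fiber contributes a sum of two monomials that telescopes against the common denominator of the corresponding entries of \eqref{smutb} to reproduce the single transformed monomial $\trs^{\ii}_{\jj}(x)^{\rvec{\lambda}}$. The bookkeeping of which dual crossings turn at which of $[s,t],[s,u],[t,u]$, and with which sign, mirrors the computations \eqref{fdel1}, \eqref{fdel1b}, \eqref{fdel1c} from the proof of Theorem \ref{formula1} and must be carried out in each of the three cases.
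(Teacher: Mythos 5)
Your proposal follows the paper's own proof essentially step for step: the paper likewise reduces to a single elementary move, disposes of \eqref{eq:ltrans} by the one-line hexagon computation $\ssai(x)-\ssaj(y)=x_{[s,t]}+x_{[t,u]}-y_{[s,u]}=0$, and proves \eqref{eq:rtrans} by establishing precisely your local fiber-sum identity $\sum_{\gamma\in\pi^{-1}(\lambda)\cap\mathcal{R}_a^*(\mathcal{T}_{\ii})}x^{\rvec{\gamma}}=\left(\trs^{\ii}_{\jj}(x)\right)^{\rvec{\lambda}}$ over the surjection $\pi_{\h}$, which is its equation \eqref{reinmut}. The only organizational difference is that the paper reaches the relevant hexagon via Lemma \ref{hexexists} and induction on $\#\mathcal{W}_a$, so that the fiber analysis of Definition \ref{quots} and Proposition \ref{orderpres} applies verbatim, whereas you permit an arbitrary braid move and would have to supply the analogous (but routine) telescoping case analysis for a general hexagon yourself.
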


\begin{proof}
There is nothing show if $\jj$ is obtained by a commutation move or a braid move replacing $(i_{k-1}, i_k, i_{k+1})$ by $(i_k, i_{k-1}, i_k)$ with $a\notin\{i_{k-1}, i_k\}$. Thus, by induction and exchanging $\ii$ and $\jj$ if necessary we can assume $\jj$ is obtained from $\ii$ by a braid move replacing $(a, a+1, a)$ by $(a+1, a, a+1)$. We denote the corresponding hexagon (see \eqref{hexbraid}) by  $\mathcal{H}=\{[s,t], [s,u], [t,u]\} \subset \mathcal{T}_{\ii}$ and set $y=\trl_{\jj}^{\ii} {x}$.
Equality \eqref{eq:ltrans} follows, since by \eqref{lpot} and \eqref{lmutb} we have
$$
\ssai (x) - \ssaj (y) =  x_{[s,t]}+x_{[t,u]}-y_{[s,u]}  = 0.
$$

We next show \eqref{eq:rtrans}. Using induction over $\# \mathcal{W}_a$ (see \eqref{def:comb}) we can assume by Lemma \ref{hexexists} that
 $\mathcal{T}_{\jj}$ is obtained from $\mathcal{T}_{\ii}$ by flipping a hexagon hexagon $\h=\{[a,s], [a,t], [s,t]\}\subset \mathcal{W}_a$. In Section \ref{dualcrossing1} we introduced the set of dual $a$-crossings $\Gamma_a^* (\mathcal{T}_{\ii})$ and  the set of dual $a$-Reineke crossings $\mathcal{R}_a^* (\mathcal{T}_{\ii})\subset \Gamma_a^* (\mathcal{T}_{\ii})$. Let $\pi: \Gamma_a^* (\mathcal{T}_{\ii}) \rightarrow \Gamma_a^* (\mathcal{T}_{\jj})$ be the map defined by \eqref{combext}. Then we have for  $\lambda\in\mathcal{R}_a^* (\mathcal{T}_{\jj})$ and $R:= \pi^{-1} (\lambda) \cap \mathcal{R}_a^*(\mathcal{T}_{\ii})$
\begin{equation}\label{reinmut}
\sum_{\gamma\in R}
x^{\rvec{\gamma}}=\left( \trs^{\ii}_{\jj}(x)\right)^{\rvec{\lambda}}.
\end{equation}
 Since $\pi\mathcal{R}_a^* (\mathcal{T}_{\ii}) = \mathcal{R}_a^* (\mathcal{T}_{\jj})$, we obtain from \eqref{reinmut} and \eqref{darvecs}
$$
\rai (x)=\sum_{\gamma\in \mathcal{R}_a^* \left( \mathcal{T}_{\ii}\right)}
x^{\rvec{\gamma}} = \sum_{\lambda\in \mathcal{R}_a^* \left( \mathcal{T}_{\jj}\right)}\left( \trs^{\ii}_{\jj}(x)\right)^{\rvec{\lambda}}=\raj \circ \trs^{\ii}_{\jj}(x).
$$
\end{proof}

\subsection{Duality of cones}
As an application of Theorem \ref{rtrans} we derive the following duality between Lusztig's' parametrizations of $B(\infty)$ and the string parametrizations.

For $\emptyset \ne C\subset \mathbb{R}^N$ we define the \emph{polar cone} $C^{\pol}$ by
$$C^{\pol}:=\{x\in \mathbb{R}^N \mid  \forall y\in C \, : \, \langle x, y\rangle \ge 0 \}.$$

We have the following duality.
\begin{thm}\label{duality} Let $\mathbf i$ be a reduced word for $w_0$ and $\mathcal{T}_{\ii}$ the tiling associated to $\ii$. Then
$$\stc_{\ii}  =\mathbf{R}^*(\mathcal{T}_{\ii})^{\pol} \qquad \text{ and } \qquad
\mathbb{N}^{\mathcal{T}_{\ii}}= \mathbf{R}(\stc_{\ii} )^{\pol} $$
\end{thm}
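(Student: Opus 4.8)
The plan is to establish the two equalities separately: the second is immediate from Lemma~\ref{stringreinvecs}, while the first is obtained by propagating a single base case along the tropical transition maps, using the transformation behaviour of Theorem~\ref{rtrans}.

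For $\mathbb{N}^{\mathcal{T}_{\ii}} = \mathbf{R}(\stc_{\ii})^{\pol}$ I would argue as follows. By Lemma~\ref{stringreinvecs} we have $\mathbf{R}_a(\stc_{\ii}) = \{e_k \mid i_k = a\}$, and since every position $k \in [N]$ satisfies $i_k = a$ for exactly one $a \in [n-1]$, taking the union over $a$ gives $\mathbf{R}(\stc_{\ii}) = \{e_1, \dots, e_N\}$. Consequently $\mathbf{R}(\stc_{\ii})^{\pol} = \{x \mid \langle x, e_k\rangle \ge 0 \text{ for all } k\}$ is the positive orthant, whose lattice points are exactly $\mathbb{N}^{\mathcal{T}_{\ii}}$ under the identification of positions with tiles; this settles the second equality at once.

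For $\stc_{\ii} = \mathbf{R}^*(\mathcal{T}_{\ii})^{\pol}$, I would write $C_{\ii} := \mathbf{R}^*(\mathcal{T}_{\ii})^{\pol}$. By \eqref{darvecs} and \eqref{rpot}, tropicalizing $\rai$ in the min-convention gives $[\rai]_{\trop}(x) = \min_{\gamma \in \mathcal{R}_a^*} \langle x, \rvec{\gamma}\rangle$, so that $C_{\ii} = \{x \mid [\rai]_{\trop}(x) \ge 0 \text{ for all } a \in [n-1]\}$. The maps \eqref{smutb} are subtraction-free, so tropicalization is functorial and Theorem~\ref{rtrans} yields $[\rai]_{\trop} = [\raj]_{\trop} \circ [\trs^{\ii}_{\jj}]_{\trop}$; since $[\trs^{\ii}_{\jj}]_{\trop}$ is a piecewise-linear bijection with inverse $[\trs^{\jj}_{\ii}]_{\trop}$, this forces $[\trs^{\ii}_{\jj}]_{\trop}(C_{\ii}) = C_{\jj}$. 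The string cones $\stc_{\ii}$ are carried into one another by the very same tropical transition maps, by the intertwining of string parametrizations recalled before Theorem~\ref{rtrans}. Hence $\{\stc_{\ii}\}$ and $\{C_{\ii}\}$ are compatible systems of cones for the groupoid generated by the $[\trs^{\ii}_{\jj}]_{\trop}$, and because any two reduced words are joined by braid and commutation moves, it suffices to verify $\stc_{\ii_0} = C_{\ii_0}$ for one conveniently chosen word $\ii_0$.

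I would take $\ii_0$ adapted to a Dynkin quiver of type $A$, compute $\mathbf{R}^*(\mathcal{T}_{\ii_0})$ explicitly, and match it with the facet description of $\stc_{\ii_0}$, invoking the computation behind Zelikson's result (Remark~\ref{zelrem}) after translating between the $*$-twist $\ii \mapsto \ii^{\starop}$ used to define $\mathbf{R}^*$ in the proof of Theorem~\ref{dualcrossing} and the order reversal in Zelikson's statement. Finally, as $\stc_{\ii}$ is by definition the lattice-point set of a rational polyhedral cone and $C_{\ii}$ is rational polyhedral, equality of the underlying real cones upgrades to the asserted identity. The routine parts are the sign bookkeeping in the tropicalization and the fact that the transition-map directions cancel, since both families transform under the same maps; the genuine obstacle is the base case, namely identifying the dual Reineke vectors of an adapted word with the defining inequalities of its string cone while reconciling the Kashiwara involution built into $\mathbf{R}^*$ with the order reversal of the adapted-quiver picture.
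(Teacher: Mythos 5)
Your proposal is correct and follows essentially the same route as the paper: the second equality is read off from Lemma~\ref{stringreinvecs}, and the first is obtained by tropicalizing Theorem~\ref{rtrans} so that the cones $\mathbf{R}^*(\mathcal{T}_{\ii})^{\pol}$ and the string cones $\stc_{\ii}$ are carried along by the same piecewise-linear bijections $\tro{\trs^{\ii}_{\jj}}$, reducing everything to one base case. The only difference is the anchor: the paper checks the base case for the lexicographically minimal word $\ii_0$ by comparing $\mathbf{R}^*(\mathcal{T}_{\ii_0})$, computed via the dual Crossing Formula, directly with the string cone inequalities of \cite{BZ,Lit}, whereas you delegate it to Zelikson's adapted-quiver computation and correctly flag the remaining $*$-twist/order-reversal reconciliation as the point that must be carried out.
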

\begin{proof}
Applying Theorem \ref{dualcrossing} to the tiling associated to the lexicographically minimal word $\ii_0=(1,2,1,\dots,n , n-1, \dots 1 )$ and comparing $\mathbf{R}^*(\mathcal{T}_{\ii_0})$ with the defining inequalities of the string cone $\stc_{\ii}$ as computed in \cite{BZ, Lit} one obtains $\stc_{\ii_0}  =\mathbf{R}^*(\mathcal{T}_{\ii_0})^{\pol}$.
Tropicalizing  \eqref{eq:rtrans} and using $\tro{\trs^{\ii}_{\jj}} \stc_{\ii} = \stc_{\jj}$ the statement
 $\stc_{\jj}  =\mathbf{R}^*(\mathcal{T}_{\jj})^{\pol}$ now follows for all $\jj$.
 The statement
 $\mathbb{N}^{\mathcal{T}_{\ii}}= \mathbf{R}(\stc_{\ii} )^{\pol}$ follows from Lemma \ref{stringreinvecs}.
\end{proof}

\begin{rem}\label{gpremark} As noted in Remark \ref{gprem}, Reineke crossings translate into the notion of rigorous paths which appear in the work \cite{GP}. Here Gleizer and Postnikov associate vectors to rigorous paths, which can be seen to coincide with the dual Reineke vectors $\mathbf{R}^*(\mathcal{T}_{\ii})=\cup_a \mathbf{R}_a^*(\mathcal{T}_{\ii})$.
As a consequence of the transformation behaviour of certain tropical functions $M^{\ii,a}_{a,a+1}$ under the tropicalization of the map \eqref{smutb} Gleizer and Postnikov show in loc. cit. that the vectors associated to rigorous paths are the defining inequalities of the string cones $\stc_{\ii}$. One finds that $M^{\ii,a}_{a,a+1}$ coincides with the tropicalization of $\rai$ and thus by Theorem \ref{dualcrossing} and \cite[Theorem 5.11]{GP} we obtain the first part of Theorem \ref{duality} stating $\stc_{\ii}  =\mathbf{R}^*(\mathcal{T}_{\ii})^{\pol}$.
\end{rem}

\begin{rem}\label{zelremark} For the special case of reduced words adapted to quivers the equality
 $$\stc_{\ii} =\mathbf{R}^*(\mathcal{T}_{\ii})^{\pol}$$
was already obtained by Zelikson in \cite{Ze} using the results of \cite{Rei, GP}. Zelikson conjectured this relation for simply laced Lie algebras and reduced words adapted to quivers satisfying a certain homological condition. We conjecture that this relation holds for all simple Lie algebras and arbitrary reduced words, i.e. the string cone $\stc_{\ii}$ is polar to the cone spanned by the vectors of the form $f_a^* x-x$ for an $\ii$-Lusztig datum $x$ (see \cite[Section 7]{Ze} for an example in type $D$).
\end{rem}

\begin{ex} We give an example for $n=3$. Consider the following tiling corresponding to the reduced word $\ii=(2,1,2)$.
\begin{center}
\begin{tikzpicture}
\draw (0.0000000000,0.0000000000)  -- node[midway, left] {$1$} (-0.8660254037,0.4999999999) -- (0.0000000000,0.9999999999) --
(0.8660254037,0.4999999999)-- (0.0000000000,0.0000000000);
\draw (0.8660254037,0.4999999999)  -- (0.0000000000,0.9999999999) -- (0.0000000000,1.9999999999) --
(0.8660254037,1.5000000000)-- (0.8660254037,0.4999999999);
\draw (-0.8660254037,0.4999999999)  -- node[midway,left] {$2$} (-0.8660254037,1.5000000000) -- node[midway,left] {$3$}  (0.0000000000,2.0000000000) --
(0.0000000000,0.9999999999)-- (-0.8660254037,0.4999999999);
\end{tikzpicture}.
\end{center}
The ordering of the tiles in the $3$-order is as follows:
$$[2,3] \le_3 [1,3] \le_3 [1,2].$$
We label the coordinates with respect to that order. The dual Reineke vectors are given by
$$\mathcal{R}^*\left(\mathcal{T}_{\ii}\right)=\left\{(1,0,0), (0,1,-1), (0,0,1)\right\}.$$
By Theorem \ref{duality} we obtain  the inequalities for the string cone for this reduced word:
$$v_{2,3}\ge 0 \qquad \qquad v_{1,3} \ge v_{1,2} \ge 0.$$
\end{ex}

\section{Application 3: Potential functions for cluster varieties and crystal operations}\label{cluster}
In Section \ref{string} we associated to a reduced word $\ii$ and
$a\in [n-1]$ the function $\ra(x) = \rai(x)=\sum_{y \in \mathbf{R}_a^* \left(\mathcal{T}_{\ii}\right)} x^y$ on the torus $\TTis$ encoding the crystal operations on $\ii$-Lusztig data.
In this section we relate $\ra$  to potential functions arising from the theory of geometric crystals in \cite{BK} and from mirror symmetry for cluster varieties in \cite{GHKK}. The relation is given by regular changes of coordinates given by maps introduced by Berenstein and Zelevinsky (see \cite[Theorem 4.8]{BZ2}) to solve certain factorization problems.

As a consequence we obtain explicit descriptions of these potential functions written in certain torus coordinates associated to reduced words $\ii$ on the reduced double Bruhat cell $L^{e,w_0}$ as well as the coordinate transformation translating one potential function to the other. Further we obtain unimodular identifications (i.e. lattice isomorphisms translating the defining inequalities) of the cones defined by the tropicalizations of the potential functions with the string cones $\stc_{\ii}$ defined in \eqref{scdef}.

\subsection{Unipotent radicals as cluster varieties}\label{clusterintro}
Let $B_+$ denote the upper triangular matrices, $B_-$ the lower triangular matrices in $G=\text{SL}_n(\C)$ and $\unip\subset B_+$ be the unipotent radical of $B_+$. The \emph{reduced double Bruhat cell $L^{e,w_0}$ associated to $e$ and $w_0$} is the non-vanishing locus of the upper right minors in $\unip$:
$$L^{e,w_0}=B_-w_0B_-\cap \mathcal{N}.$$

Following Berenstein-Fomin-Zelevinsky, Fomin-Zelevinsky and Fock-Goncharov \cite{FZ, BFZ2, FZ2, FG} we endow $L^{e,w_0}$ with an $\mathcal{A}$-cluster structure $(\TTsea)$ and a mirror dual $\mathcal{X}$-cluster structure $(\TTsex)$.  The families $(\TTsea)$ and $(\TTsex)$ are up to codimension $2$ open coverings of $L^{e,w_0}$ by tori $\TTsea:=\mathbb{G}_m^{\SE}$ and $\TTsex:=\Hom(\Hom(\GG,\TTsea), \GG)$. The birational transition maps $\ttsa$ and $\ttsx$, called $\mathcal{A}$- and $\mathcal{X}$-cluster mutation respectively, are given as follows.
Assume that the exchange graph $Q_{\SE'}$ of the seed $\SEp$ is obtained from the exchange graph $Q_{\SE}$ of the seed $\SE$ by cluster mutation at a vertex $k$. Let $I$ be a set containing $k$ which consistently labels both the vertices $Q_{\SE}$ and $Q_{\SE'}$. Then (see \cite[Equations (13) and (14)]{FG})
\begin{alignat*}{3}
{\ttsa}&: \TTsep   \dashedrightarrow \TTse, \quad
\left({\ttsa} x \right)_i &&= \begin{cases} \displaystyle\prod_{j \,:\, \varepsilon_{j,k}>0} \frac{x_j}{x_k} + \displaystyle\prod_{j \,:\, \varepsilon_{j,k} < 0} \frac{x_j}{x_k} & \text{if $i=k$,} \\x_i &  \text{else,} \end{cases} \\
{\ttsx} &: \TTsexp  \dashedrightarrow \TTsex , \quad
\left({\ttsx} x \right)_i &&= \begin{cases} x_k^{-1} & \text{if $i=k$,} \\ x_i \left(1+x_k^{-\sgn \varepsilon_{i,k}}\right)^{-\varepsilon_{i,k}} &  \text{else,} \end{cases}
\end{alignat*}
where $\varepsilon_{i,k}$ denotes the number of arrows in $Q$ from $i$ to $k$.

The coordinate functions on $\TTsea$ and $\TTsex$ are called the $\mathcal{A}$- and $\mathcal{X}$-cluster variables, respectively. We refer to the set of such variables for a given torus $\TTsea$ or $\TTsex$ together with the exchange graph $Q_{\SE}$ as an $\mathcal{A}$- and $\mathcal{X}$-cluster seed, respectively.

As initial seed we may chose the seed $\SEi$ associated to a reduced word $\ii$ in \cite[Section 2.2]{BFZ2}. 
We recall from \cite{DKK2} the construction of the $\mathcal{A}$-cluster seed $\SE=\SEi$ associated to a tiling $\mathcal{T}=\mathcal{T}_{\ii}$.  Let $V(\mathcal{T})$ be the set of vertices of $\mathcal{T}$ that do not lie on the left boundary of $\mathcal{T}$.
As in (\ref{eq:vertices}) we identify a vertex $v=v_S$ of $\mathcal{T}$ with a subset of  $S\subset\mathcal{P} ([n])$.
To a vertex $v_S\in V(\mathcal{T})$ we associate the chamber minor $\Delta_{\scriptsize S}=\Delta_{\scriptsize S}^{\scriptsize{[\#S]}}$, which is the regular function on $L^{e,w_0}$ taking the determinant of the submatrix consisting of the first $\#S$ rows and columns labeled by $S$.   The collection of chamber minors $\{\Delta_S \mid v_S \in V(\mathcal{T})\}$ associated to the vertices $V(\mathcal{T})$ is the set of $\mathcal{A}-$cluster variables in the seed $\SE_{\ii}$ and thus provides the coordinate functions of $\TTia:=\TTse$, i.e. $\TTia=\mathbb{G}_m^{V(\mathcal{T}_{\ii})}$.
The chamber minors corresponding to the vertices on the right boundary are the frozen variables.

The $\mathcal{X}$-cluster torus $\TTix$ is the dual torus of $\TTia$, i.e. $\TTix:=\Hom(\Hom(\GG,\TTia),\GG)$.

The exchange graph for $\SE_{\ii}$ can be obtained from $\mathcal{T}=\mathcal{T}_{\ii}$ in the following
 way. The vertices of the exchange graph are given by the elements in $V(\mathcal{T})$. Using the notation of Section \ref{MV} we draw for each tile $T\in \mathcal{T}$ with $\ell(T)\in V(\mathcal{T})$ an arrow pointing from $\ell(T)$ to $r(T)$. The edges of $\mathcal{T}$ get oriented such that we have oriented cycles $\ell(T) \rightarrow r(T) \rightarrow o(T) \rightarrow \ell(T)$ and $\ell(T) \rightarrow r(T) \rightarrow u(T) \rightarrow \ell(T)$ for all $T\in \mathcal{T}$. If this procedure does not yield a unique orientation for an edge of $\mathcal{T}$ this edge gets deleted. Furthermore we delete all arrows between vertices on the boundary.

\begin{ex}We describe the $\mathcal{A}$-cluster seed $\SEi$ for the reduced word  $\ii=(1,2,1)$ in the case $n=3$. The cluster variables $\{\Delta_{\{2\}}, \Delta_{\{2,3\}}, \Delta_{\{3\}}\}$ are associated to the vertices of $\mathcal{T}_{\ii}$ as follows.
\begin{center}
\begin{tikzpicture}
\draw (0.0000000000,0.0000000000) node[font=\scriptsize, below] {} -- (-0.8660254037,0.4999999999) node[font=\scriptsize, left] {} -- (-0.8660254037,1.5000000000) node[font=\scriptsize, left] {} --
(0.0000000000,1.0000000000) -- (0.0000000000,0.0000000000);
\draw (0.0000000000,1.0000000000) node[font=\scriptsize, above] {$\Delta_{\{2\}}$}  -- (-0.8660254037,1.5000000000)  -- (0.0000000000,2.0000000000) node[font=\scriptsize, above] {}  --
(0.8660254037,1.5000000000)node[font=\scriptsize, right] {$\Delta_{\{2,3\}}$}-- (0.0000000000,1.0000000000);
\draw (0.0000000000,0.0000000000)  -- (0.0000000000,1.0000000000) -- (0.8660254037,1.5000000000) --
(0.8660254037,0.4999999999) node[font=\scriptsize, right] {$\Delta_{\{3\}}$} -- (0.0000000000,0.0000000000);
\end{tikzpicture}
\end{center}
The exchange graph $Q_{\ii}$ is given by:
\begin{center}
\resizebox{12em}{!}{$\scriptscriptstyle{\xymatrix@R-=0.75cm{
   &  & \Delta_{\{2,3\}}  \ar[ld]  \\
 & \Delta_{\{2\}} \ar[rd]& \\
 & & \Delta_{\{3\}} \\
}}
$}
\end{center}
\end{ex}

\begin{rem}
Our convention differs slightly from the one used in \cite{BFZ2} in that we associate the $\mathcal{A}$-cluster variables to the vertices of the exchange graph in the order reversed to \cite[Equation (2.11)]{BFZ2}. 
\end{rem}

The following notion is crucial for the definition of the two potential functions we deal with in this section.

\begin{defi} An $\mathcal{A}$-cluster (resp. $\mathcal{X}$-cluster) seed $\SE$ consisting of the cluster variables $\{c_1,\ldots,c_N\}$ is \emph{optimized} for a frozen variable $c_i$ if there is no arrow in $Q_{\SE}$ pointing from $c_i$ towards a non-frozen variable.
\end{defi}
If $\SE$ corresponds to a reduced word $\ii$ we can reformulate this definition as follows. Denoting the $\mathcal{A}$- or $\mathcal{X}-$cluster variable attached to the vertex $v_S$ of $\mathcal{T}_{\ii}$ by $c_S$ we have:
\begin{lem}\label{optimized} Let $\ii$ be a reduced word and $\mathcal{T}_{\ii}$ be the corresponding tiling. Then the seed $\SEi$ corresponding to $\ii$ is optimized for $c_{\{a+1,\ldots,n\}}$ if and only if the tile $[a,a+1]$ intersects the right boundary of $\mathcal{T}_{\ii}$ in two edges. In other words $\SEi$ is optimized for $c_{\{a+1,\ldots,n\}}$ if and only of $\ii$ is commutation equivalent to a reduced word ending with $n-a$.
\end{lem}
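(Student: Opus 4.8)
The plan is to compute the quiver $Q_{\ii}$ locally at the right-boundary vertex attached to $c_{\{a+1,\dots,n\}}$ and to show that this vertex has an outgoing arrow to a mutable variable exactly when more than one tile is needed to fill the interior angle there. First I would locate the vertex: descending the right boundary of $\mathcal{T}_{\ii}$ from $v_{[n]}$ the edges are labelled $1,2,\dots,n$ in order, so the right-boundary vertices are exactly the $v_{\{c,\dots,n\}}$; thus $w:=v_{\{a+1,\dots,n\}}$ lies on the right boundary, $w\in V(\mathcal{T}_{\ii})$, and $c_{\{a+1,\dots,n\}}=\Delta_{\{a+1,\dots,n\}}$ is frozen. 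The two boundary edges at $w$ carry the labels $a$ (to $v_{\{a,\dots,n\}}$) and $a+1$ (to $v_{\{a+2,\dots,n\}}$). Using \eqref{oben} one checks that $[a,a+1]$ meets the right boundary in two edges if and only if $[a,a+1]=[a,a+1;\{a+2,\dots,n\}]$, in which case $w=r([a,a+1])$ and the two edges above are its boundary edges; equivalently, this one tile already fills the whole interior angle at $w$.

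I would then describe the arrows at $w$. Edges emanating from a vertex of a rhombic tiling have pairwise distinct labels, so reading the edges at $w$ from the upper boundary edge to the lower one gives distinct labels $a=\ell_0,\ell_1,\dots,\ell_k=a+1$, the incident tiles being $[\min(\ell_{j-1},\ell_j),\max(\ell_{j-1},\ell_j);\,\cdot\,]$. By the orientation rule defining $Q_{\ii}$, the edge labelled $i$ joining $X$ and $X\cup\{i\}$ is oriented $X\to X\cup\{i\}$ when $i$ is the smaller label of its tile and the other way when $i$ is the larger one; comparing the two tiles sharing an internal edge $f_j$ shows that $f_j$ survives precisely when $\ell_j$ is a local extremum of $\ell_{j-1},\ell_j,\ell_{j+1}$, the surviving edge being oriented so as to add $\ell_j$ at a local minimum and to remove it at a local maximum. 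Since, with $R:=\{a+1,\dots,n\}$, one has $\ell_j\in R$ iff $\ell_j>a$, this translates into: the internal edge $f_j$, which always joins $w$ to an interior vertex, points out of $w$ iff $\ell_j$ is a local minimum with $\ell_j<a$ or a local maximum with $\ell_j>a+1$. I note that $w$ is never the vertex $\ell(T)$ of an incident tile, so there are no outgoing diagonal arrows, and the boundary edges $f_0,f_k$ reach only right-boundary vertices and are deleted. Hence $w$ has an arrow to a mutable variable iff some internal $f_j$ points out of $w$.

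Next I would settle the first equivalence by an extremal argument. If $k=1$ there are no internal edges and $w$ is optimized. If $k\ge 2$, let $M$ be the (unique) global maximum of the distinct labels $\ell_0,\dots,\ell_k$: if $M>a+1$ it is attained at an interior index and yields an outgoing local maximum; if $M=a+1$ then every internal label is $<a$, the global minimum is attained at an interior index, and yields an outgoing local minimum. Either way $w$ is not optimized, so optimization holds iff $k=1$, i.e. iff $[a,a+1]$ meets the right boundary in two edges. For the reformulation I would use that $\mathcal{T}_{\ii}$ depends only on the commutation class of $\ii$ (\cite[Theorem 2.2]{El}) and that the tiles meeting the right boundary in two edges are exactly the $\le_n$-maximal tiles (dual to the left-boundary tile being the $\le_{\ii}$-minimal one, as used for $a=i_1$); since a $\le_n$-maximal tile $[a,a+1]$ can be realized as the last element $\beta_N=s_{i_1}\cdots s_{i_{N-1}}(\alpha_{i_N})=\alpha_{n-i_N}$ of a linear extension, this occurs for some word in the class iff that word ends in $n-a$.

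The step I expect to be the main obstacle is the orientation bookkeeping of the second paragraph: proving cleanly that an internal edge survives exactly at local extrema of the label sequence and determining its direction relative to $w$, as this is where the global deletion rule and the labelling conventions must be reconciled uniformly.
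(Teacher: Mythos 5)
Your proof is correct and takes essentially the same approach as the paper: the paper's own proof is just the one-line assertion that the lemma ``follows directly from the construction of the exchange graph of $\SE_{\ii}$ and the definition of $\mathcal{T}_{\ii}$'', and your argument is exactly the detailed local computation at the frozen vertex $v_{\{a+1,\dots,n\}}$ that this assertion leaves implicit. The orientation bookkeeping (an internal edge survives precisely at a local extremum of the label sequence, pointing out of $w$ for a local minimum below $a$ or a local maximum above $a+1$) and the closing extremal argument both check out, as does the reformulation via Elnitsky's correspondence.
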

\begin{proof} This follows directly from the construction of the exchange graph of  $\SE_{\mathcal{T}}$ and the definition of $\mathcal{T}_{\ii}$ given in \eqref{eq:roots}.
\end{proof}

\subsection{Gross-Hacking-Keel-Kontsevich potential functions and Reineke vectors}

The unipotent radical $\unip$ is the partial compactification of the reduced double Bruhat cell $L^{e,w_0}$:
$$\unip=L^{e,w_0} \bigsqcup \displaystyle\bigcup_{a\in[n-1]} D_a,$$
where $D_a$ is the divisor of zeros of the frozen variable $\Delta_{\{n-a+1,\ldots,n\}}$.

In \cite{GHKK} a Landau-Ginzburg potential $W=\sum_{a\in[n-1]} W_a$ on the $\mathcal{X}$ cluster variety associated to $\unip$ is defined as the sum of certain global monomials $W_a$ attached to the divisors $D_a$. The function $W_a$ can be defined as follows. Let $\ii=(i_1, \dots, i_N)$ be a reduced word with $i_N=n-a$ and let $x_S$ denote the cluster variable attached to the vertex $v_S$ of $\mathcal{T}_{\ii}$. Then we have 
by Lemma \ref{optimized} and \cite[Corollary 9.17]{GHKK} 
\begin{equation}
 \label{eq:GHKKpot}
\restr{W_a}{\TTix}=x^{-1}_{\{a+1,\ldots, n\}}.
\end{equation}
We call $W$ the GHKK-potential.

The potential $W_a$ is related to the function $\rai$ arising from crystal operations (see \eqref{rpot})
via
the map $ \CAid\in\Hom(\TTis, \TTix)$ defined by
\begin{equation*}
\CAid:  \TTis = \mathbb{G}_m^{\mathcal{T}_{\ii}}  \rightarrow \mathbb{G}_m^{V(\mathcal{T}_{\ii})} = \TTix
\end{equation*}
\begin{equation*}
 \left(\CAid x\right)_v= 
 \prod_{T}  x_T^{\underline{\epsilon}(v,T)}, 
\qquad \underline{\epsilon}(v,T)=\begin{cases}-1&\text{if $v\in\{\ell(T), r(T)\}$, }\\1&\text{if $v\in\{o(T), u(T)\}$}\\ 0&\text{else,} \end{cases}
 \end{equation*}
where $o(T), u(T), \ell(T)$ and $r(T)$ are defined in \eqref{oben}. 
The map $\CAid$ is the dual map of the Chamber Ansatz $\CAi\in\Hom(\TTia, \TTil)$
of Berenstein, Fomin and Zelevinsky \cite{BFZ}.
The family $(\CAid)$ is compatible with $\mathcal{X}$-mutations in the following sense:
 \begin{prop}\label{diagX} For two reduced words $\ii$ and $\jj$ for $w_0$ the following diagram commutes.
$$
 \xymatrix{\TTis \ar[rr]^{\CAid}\ar[dd]_{\trs_{\jj}^{\ii}} & &\TTix \ar[dd]^{\ttix} \\
& \\
\TTisp \ar[rr]^{\CAjd} & & \TTixp}
$$
 \end{prop}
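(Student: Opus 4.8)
The plan is to verify commutativity on elementary moves and then propagate it by the functoriality of all three families of maps. Since $\trs^{\ii}_{\jj}$, the $\mathcal{X}$-mutations $\ttix$, and the Chamber Ansatz family $\CAid$ are each compatible with composition along the relations of the symmetric group (the compositions of the local maps \eqref{smutc}, \eqref{smutb} satisfy these relations, as noted before the statement), a standard pasting of commutative squares reduces the claim to the case where $\jj$ is obtained from $\ii$ by a single move. For a commutation move $\mathcal{T}_{\ii}=\mathcal{T}_{\jj}$, the seed is unchanged, $\trs^{\ii}_{\jj}$ and $\ttix$ are identities and $\CAid=\CAjd$, so the square is trivially commutative. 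Hence I would concentrate entirely on a single braid move, i.e. a flip of a hexagon $\h=\{[s,t;S],\,[s,u;S\cup\{t\}],\,[t,u;S]\}\subset\mathcal{T}_{\ii}$ with $s<t<u$ as in \eqref{hexbraid}, which exists by the mechanism behind Lemma \ref{hexexists}.

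The first substantive step is to identify the cluster mutation. The flip replaces the interior vertex $v_{S\cup\{t\}}$ of the hexagon by $v_{S\cup\{s,u\}}$, while the six boundary vertices $v_S$, $v_{S\cup\{s\}}$, $v_{S\cup\{u\}}$, $v_{S\cup\{s,t\}}$, $v_{S\cup\{t,u\}}$, $v_{S\cup\{s,t,u\}}$ are common to $\mathcal{T}_{\ii}$ and $\mathcal{T}_{\jj}$. I would read off directly from the exchange-graph recipe that $Q_{\SEj}$ is the mutation of $Q_{\SEi}$ at $k:=v_{S\cup\{t\}}$, so that $\ttix$ is the single $\mathcal{X}$-mutation at $k$. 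Inspecting the oriented cycles $\ell(T)\to r(T)\to o(T)\to\ell(T)$ and $\ell(T)\to r(T)\to u(T)\to\ell(T)$ of the three tiles, and applying the deletion rule to the one edge receiving conflicting orientations (the edge $v_S$--$v_{S\cup\{t\}}$, oriented $k\to v_S$ by $[s,t;S]$ and $v_S\to k$ by $[t,u;S]$), one finds exactly four neighbours of $k$, with $\varepsilon_{v_{S\cup\{s\}},k}=\varepsilon_{v_{S\cup\{t,u\}},k}=+1$ and $\varepsilon_{v_{S\cup\{s,t\}},k}=\varepsilon_{v_{S\cup\{u\}},k}=-1$, all other $\varepsilon_{\cdot,k}$ vanishing.

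The heart of the proof is then a direct monomial computation. Writing $\chi_v:=(\CAid x)_v=\prod_T x_T^{\underline{\epsilon}(v,T)}$ and abbreviating $\chi_k=x_{[s,u]}/(x_{[s,t]}x_{[t,u]})$ (the only contributions to $\chi_k$ come from the three hexagon tiles, since $k$ is interior), I would verify vertex by vertex, after substituting the braid formulas \eqref{smutb} for $y_{[s,t]}$, $y_{[s,u]}$, $y_{[t,u]}$ and cancelling the common external factor $\prod_{T\notin\h}x_T^{\underline{\epsilon}(v,T)}$: at the mutated vertex $(\CAjd\trs^{\ii}_{\jj}x)_{v_{S\cup\{s,u\}}}=\chi_k^{-1}$; at each of the four neighbours the ratio $(\CAjd\trs^{\ii}_{\jj}x)_v/\chi_v$ equals $1+\chi_k$ when $\varepsilon_{v,k}=-1$ and $(1+\chi_k^{-1})^{-1}$ when $\varepsilon_{v,k}=+1$, in exact agreement with the factor $(1+\chi_k^{-\sgn\varepsilon_{v,k}})^{-\varepsilon_{v,k}}$ of the $\mathcal{X}$-mutation formula; and at every remaining vertex the monomial is unchanged because $\underline{\epsilon}(v,T)=0$ for the three hexagon tiles and $\trs^{\ii}_{\jj}$ fixes every tile outside $\h$. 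These four local identities, together with the central and the trivial cases, establish $\CAjd\circ\trs^{\ii}_{\jj}=\ttix\circ\CAid$ for the braid move.

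I expect the main obstacle to be bookkeeping rather than anything conceptual: pinning down the quiver at $k$ exactly, in particular distinguishing the diagonal arrows $\ell(T)\to r(T)$ from the side arrows and applying the deletion rule so that no spurious double arrows survive, and then matching the resulting signs of $\varepsilon_{v,k}$ against the exponents $\underline{\epsilon}(v,T)$ produced by the Chamber Ansatz. A secondary point requiring care is the indexing/direction convention for $\dual{\mu}^{\ii}_{\jj}$ under the relabelling $k\colon v_{S\cup\{t\}}\leftrightarrow v_{S\cup\{s,u\}}$, and the treatment of hexagons meeting the right boundary, where some of the six boundary vertices carry frozen variables; there one checks that the frozen exponents occur identically in $\CAid$ and $\CAjd$ and hence drop out of every ratio, so the interior computation above is unaffected.
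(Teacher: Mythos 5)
Your proposal is correct and follows essentially the same route as the paper, which likewise reduces by induction to a single hexagon flip and then asserts that $\ttix$ is the $\mathcal{X}$-mutation at the inner vertex and that the square commutes "by direct computation." Your write-up simply carries out that direct computation in full (the quiver data $\varepsilon_{v,k}$ at the interior vertex, the deletion of the $v_S$--$v_{S\cup\{t\}}$ edge, and the vertex-by-vertex monomial identities all check out), so it is a correct, more detailed version of the paper's argument.
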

 \begin{proof}By induction we can assume that $\mathcal{T}_{\ii}$ and $\mathcal{T}_{\jj}$ are obtained from each other by braid move corresponding to a flip of a hexagon $\mathcal{H}\subset \mathcal{T}_{\ii}$ as in \eqref{hexbraid}. By direct computation it can be checked in this case that $\ttix$ is given by mutation at the inner vertex of $\mathcal{H}$ and that the diagram commutes.
 \end{proof}
 Using Theorem \ref{rtrans} and the enhanced AM-formula (Theorem \ref{sAM}) we obtain:
  \begin{thm}\label{GHKK=Reineke} Let $\ii$ be a reduced word for $w_0$ and $a\in [n-1]$. Then we have
 \begin{equation}\label{g=r}
 \rai =
 \restr{W_a}{\TTix} \circ \CAid.
 \end{equation}
 Furthermore, the GHKK-potential
 $\restr{W}{\TTix}=\sum_{a \in [n-1]} \rai \circ (\CAid)^{-1}$
 is a Laurent polynomial without constant term, coefficients in $\{0,1\}$ and exponents in $\{0,-1\}$.
 \end{thm}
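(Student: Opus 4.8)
The plan is to treat the identity \eqref{g=r} and the structural statement separately, reducing the former to an optimized seed via the transformation behaviour already at hand. For \eqref{g=r}, the key observation is that both sides transform compatibly under a change of reduced word: Theorem \ref{rtrans} gives $\rai = \raj\circ\trs^{\ii}_{\jj}$, Proposition \ref{diagX} gives $\ttix\circ\CAid = \CAjd\circ\trs^{\ii}_{\jj}$, and since $W_a$ is a global function on the $\mathcal{X}$-cluster variety its restrictions glue, $\restr{W_a}{\TTixp}\circ\ttix = \restr{W_a}{\TTix}$. Composing these three relations shows that \eqref{g=r} holds for $\ii$ as soon as it holds for any single reduced word $\jj$. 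Since $w_0$ has a reduced word ending in the letter $n-a$, it suffices by Lemma \ref{optimized} to treat a $\jj$ whose seed $\SEj$ is optimized for $c_{\{a+1,\dots,n\}}$, equivalently for which the tile $[a,a+1]$ meets the right boundary of $\mathcal{T}_{\jj}$ in two edges.

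First I would settle this base case. When $[a,a+1]$ meets the right boundary in two edges it is the last tile of both $\li^a$ and $\li^{a+1}$, so $(a,a+1)$ is the only dual $a$-crossing; by \eqref{Rdef} its vector is the unit vector at $[a,a+1]$ and hence $\raj(x)=x_{[a,a+1]}$. On the other side, \eqref{eq:GHKKpot} gives $\restr{W_a}{\TTixp}=x_{\{a+1,\dots,n\}}^{-1}$, so I must evaluate $(\CAjd x)_{\{a+1,\dots,n\}}$. Writing $[a,a+1]=[a,a+1;\{a+2,\dots,n\}]$, one reads off from \eqref{oben} that $v_{\{a+1,\dots,n\}}=r([a,a+1])$ and that its two boundary edges belong to $[a,a+1]$; thus $v_{\{a+1,\dots,n\}}$ is a corner vertex incident to no other tile, so $(\CAjd x)_{\{a+1,\dots,n\}}=x_{[a,a+1]}^{\underline{\epsilon}(r([a,a+1]),[a,a+1])}=x_{[a,a+1]}^{-1}$. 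This matches $\raj$, completing \eqref{g=r}.

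For the structural statement I would first record the easy parts. By \eqref{g=r} we have $\restr{W_a}{\TTix}=\rai\circ(\CAid)^{-1}$; since $\rai=\sum_{\gamma\in\mathcal{R}_a^*}x^{\rvec{\gamma}}$ and $\CAid$ is a monomial isomorphism of tori, $\restr{W_a}{\TTix}$ is a Laurent polynomial carrying, for each $\gamma$, a monomial $\prod_{v_S}x_{v_S}^{c^{\gamma}_{v_S}}$, where $c^{\gamma}$ is the unique solution of $\sum_{v_S}c^{\gamma}_{v_S}\,\underline{\epsilon}(v_S,T)=\rvec{\gamma}_T$. Each $\rvec{\gamma}$ is nonzero, so each $c^{\gamma}$ is nonzero and there is no constant term. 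Moreover $\gamma\mapsto\rvec{\gamma}$ is injective (a neighbour sequence is determined by its Reineke vector) and $(\underline{\epsilon}(v_S,T))$ is invertible, so $\gamma\mapsto c^{\gamma}$ is injective and the monomials of $\restr{W_a}{\TTix}$ are pairwise distinct with coefficient $1$; together with a check that the monomial supports attached to different $a$ do not collide, this yields coefficients in $\{0,1\}$ for $\restr{W}{\TTix}=\sum_a\restr{W_a}{\TTix}$.

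The main obstacle is the claim that every exponent $c^{\gamma}_{v_S}$ lies in $\{0,-1\}$, and this is exactly where the enhanced Anderson--Mirkovi\'c formula enters. The point is that the matrix $(\underline{\epsilon}(v_S,T))$ governing $c^{\gamma}$ is precisely the tropical Chamber Ansatz of \eqref{eq:bzdef}, so recovering $c^{\gamma}$ from $\rvec{\gamma}$ is the very operation that produces a $w_0$-normalized BZ-datum from a Lusztig datum. By Proposition \ref{prop:Reineke} every $\gamma\in\mathcal{R}_a^*$ equals $\gamma^x$ for $x=[\rvec{\gamma}]^-$; choosing a BZ-datum $\zzz$ with $[\CA_{\ii}]_{\trop}(P(\zzz))=x$ and setting $f^*_a P(\zzz)=P(\zzz')$, the dual Crossing Formula (Theorem \ref{dualcrossing}) together with \eqref{eq:bzdef} gives $\rvec{\gamma}_T=\sum_{v_S}\underline{\epsilon}(v_S,T)\,(\zzz'_{v_S}-\zzz_{v_S})$, whence $c^{\gamma}_{v_S}=\zzz'_{v_S}-\zzz_{v_S}$ by uniqueness. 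It then remains to see that $\zzz_{v_S}-\zzz'_{v_S}\in\{0,1\}$ for the $*$-action; this is the $f^*_a$-analogue of Theorem \ref{sAM}, which I would deduce from Theorem \ref{sAM} itself through the realization of the Kashiwara involution on MV polytopes (equivalently, the $\diamond$-reduction to the word $\ii^{\starop}$ used in the proof of Theorem \ref{dualcrossing}). Reconciling the dual ($f^*_a$, $\mathcal{R}_a^*$) objects defining $\rai$ with the ordinary $f_a$-objects governed by Theorem \ref{sAM}, and verifying the no-collision statement across distinct $a$, is the delicate part of the argument.
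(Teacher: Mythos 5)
Your proposal follows the paper's proof essentially step for step: \eqref{g=r} is reduced via Theorem \ref{rtrans} and Proposition \ref{diagX} to a seed optimized for $c_{\{a+1,\dots,n\}}$, where both sides collapse to the single monomial $x_{[a,a+1]}$, and the exponent bound $\{0,-1\}$ is then extracted from the enhanced Anderson--Mirkovi\'c formula (Theorem \ref{sAM}) applied to the difference of BZ-data. The one point where you diverge is that the paper writes each $y\in\mathbf{R}_a^*$ as $y=f_a z-z$ and invokes Theorem \ref{sAM} directly, whereas you correctly note that by definition $y=f_a^* z-z$, so that a $*$-analogue of Theorem \ref{sAM} (obtained through the Kashiwara involution on MV polytopes, or the $\ii^{\starop}$ reduction from the proof of Theorem \ref{dualcrossing}) is what is actually required; on this step your treatment is more careful than the paper's own, though you leave the transfer itself as a sketch.
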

 \begin{proof}
In order to prove \eqref{g=r} it suffices by Theorem \ref{rtrans} and Proposition \ref{diagX} to show the statement for the case that the tile $[a,a+1]$ intersects the right boundary of $\mathcal{T}=\mathcal{T}_{\ii}$ in two edges (i.e. $\ii$ is commutation equivalent to a reduced word ending with $i_N=n-a$). In this case we have by the definition of $\rai$ and \eqref{eq:GHKKpot} that
 $\rai (x) = x_{[a,a+1]_{\mathcal{T}}}=
  \restr{W_a}{\TTix} \circ \CAid (x).$
By the duality of $\CAid$ and $\CAi$ we obtain
$$
\restr{W_a}{\TTix}(x)=\rai \circ (\CAid)^{-1} (x) = \sum_{y\in \mathbf{R}_a^* \left(\mathcal{T}_{\ii}\right)} x^{\left[\CAi\right]_{\trop} ^{-1} y}.
$$
The map $[\CAi]_{\trop}$ is explicitly given in \eqref{eq:bzdef} (we identify $P(z)$ with the corresponding point $(z_v)_{v\in V(\mathcal{T}_{\ii})}\in\Z^{V(\mathcal{T}_{\ii})}$).
For $y\in \mathbf{R}_a^* \left(\mathcal{T}_{\ii}\right)$ there exists, by definition, $z\in\N^{\mathcal{T}_{\ii}}$ with $y=f_a z- z$, where $f_a$ is the Kashiwara operator associated to the simple root $\alpha_a$. By Theorem \ref{sAM} we conclude for $S\subset [n]$
$$
\left(\left[\CAi\right]_{\trop} ^{-1} y\right)_S =
\left(f_a \left[\CAi\right]_{\trop} ^{-1}  z - \left[\CAi\right]_{\trop} ^{-1} z \right)_S \in  \{0,-1\}.
$$
 \end{proof}

Gross, Hacking, Keel and Kontsevich construct in \cite{GHKK} a canonical basis $\mathbb{B}^{\text{GHKK}}$ of $\C[\unip]$ and parametrizations of $\mathbb{B}^{\text{GHKK}}$ by the cones
  \begin{equation}\label{ghkkcone}
 \mathcal{C}_{\SE}^{\text{GHKK}}:=\left\{z \,\,\middle\mid\,\, \left[\restr{W}{\TTse}\right]_{\trop} (z)\ge 0 \right\}\subset \Hom(\GG, \TTsex).
 \end{equation}
  As a consequence of Theorem \ref{rtrans} and Theorem \ref{GHKK=Reineke}  we obtain:
 \begin{cor}\label{string=xcluster} For any reduced word $\ii$ for $w_0$ we have
$$\mathcal{C}_{\SEi}^{\text{GHKK}}= [\CAid]_{\trop} \left( \stc_{\ii}\right).$$
 \end{cor}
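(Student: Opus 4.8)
The plan is to deduce the statement by combining the identification of $\restr{W}{\TTix}$ with the Reineke function from Theorem \ref{GHKK=Reineke} and the cone duality of Theorem \ref{duality}, after pulling everything back along $\CAid$. First I would record that $\CAid$, being the dual of the Chamber Ansatz isomorphism $\CAi$, is an isomorphism of tori; hence its tropicalization $\tro{\CAid}$ is an isomorphism of cocharacter lattices, and $z\in\tro{\CAid}(\stc_{\ii})$ if and only if $\tro{\CAid}^{-1}(z)\in\stc_{\ii}$. Since $\TTsex=\TTix$ for the seed $\SEi$, the defining inequality of $\mathcal{C}_{\SEi}^{\text{GHKK}}$ in \eqref{ghkkcone} reads $\tro{\restr{W}{\TTix}}(z)\ge 0$. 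Thus it suffices to prove
\[
\tro{\restr{W}{\TTix}}(z)\ge 0 \iff \tro{\CAid}^{-1}(z)\in\stc_{\ii}.
\]

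Next I would tropicalize the potential. By Theorem \ref{GHKK=Reineke} we have $\restr{W}{\TTix}=\sum_{a\in[n-1]}\rai\circ(\CAid)^{-1}$, and by \eqref{rpot} each $\rai(x)=\sum_{y\in\mathbf{R}_a^*(\mathcal{T}_{\ii})}x^y$ is a Laurent polynomial all of whose coefficients equal $1$; moreover Theorem \ref{GHKK=Reineke} guarantees that $\restr{W}{\TTix}$ has coefficients in $\{0,1\}$, so no cancellation occurs. The tropicalization convention of the paper sends a product to a sum and a sum to a minimum (this is exactly how \eqref{lmutb} tropicalizes to \eqref{tlmutb}). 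Using that the monomial map $(\CAid)^{-1}$ tropicalizes to the linear map $\tro{\CAid}^{-1}$ and that $\mathbf{R}^*(\mathcal{T}_{\ii})=\bigcup_a\mathbf{R}_a^*(\mathcal{T}_{\ii})$, this yields
\[
\tro{\restr{W}{\TTix}}(z)=\min_{y\in\mathbf{R}^*(\mathcal{T}_{\ii})}\bigl\langle y,\ \tro{\CAid}^{-1}(z)\bigr\rangle .
\]

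Finally I would conclude. Writing $w=\tro{\CAid}^{-1}(z)$, the inequality $\tro{\restr{W}{\TTix}}(z)\ge 0$ is equivalent to $\langle y,w\rangle\ge 0$ for every $y\in\mathbf{R}^*(\mathcal{T}_{\ii})$, that is, $w\in\mathbf{R}^*(\mathcal{T}_{\ii})^{\pol}$. By the duality Theorem \ref{duality} we have $\mathbf{R}^*(\mathcal{T}_{\ii})^{\pol}=\stc_{\ii}$, so $\tro{\restr{W}{\TTix}}(z)\ge 0$ if and only if $\tro{\CAid}^{-1}(z)\in\stc_{\ii}$, which by the first paragraph is exactly $\mathcal{C}_{\SEi}^{\text{GHKK}}=\tro{\CAid}(\stc_{\ii})$. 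The only step requiring genuine care is the tropicalization: one must verify that $\restr{W}{\TTix}$ is a positive Laurent polynomial with no cancellation, so that its tropicalization is honestly the minimum over the exponent set $\mathbf{R}^*(\mathcal{T}_{\ii})$ (this is precisely the content of the coefficients-in-$\{0,1\}$ assertion of Theorem \ref{GHKK=Reineke}), and that the sign conventions for $\tro{\cdot}$ are aligned so that the inequality $\ge 0$ matches the polar-cone condition defining $\stc_{\ii}$ through Theorem \ref{duality}.
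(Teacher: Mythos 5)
Your proposal is correct and follows essentially the route the paper intends: the paper states the corollary as an immediate consequence of Theorem \ref{GHKK=Reineke} (together with the transformation behaviour of $\rai$), and the content of that deduction is exactly your computation that tropicalizing $\restr{W}{\TTix}=\sum_a \rai\circ(\CAid)^{-1}$ — legitimate because all coefficients are $1$, so no cancellation — turns the condition $\left[\restr{W}{\TTix}\right]_{\trop}(z)\ge 0$ into $\tro{\CAid}^{-1}(z)\in\mathbf{R}^*(\mathcal{T}_{\ii})^{\pol}=\stc_{\ii}$ via Theorem \ref{duality}. Your explicit attention to the sum-to-min tropicalization convention and to the invertibility of the monomial map $\CAid$ fills in precisely the details the paper leaves implicit.
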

 
 \begin{rem} Let $\mathcal{T}_0$ be the tiling associated to the lexicographically minimal reduced word $\ii_0$. In \cite[Corollary 21]{M} the function $\restr{W}{\check{\mathbb{T}}_{\ii_0}}$ is explicitly computed. This description agrees with the one given by Theorem \ref{GHKK=Reineke} in this case. Furthermore, in \cite{BF} a unimodular transformation between the cone cut out by the GHKK potential function for the partial compactification $SL_n/N$ of the double Bruhat cell $G^{e,w_0}=B^-\cap B w_0 B$ to the weighted string cone was independently discovered by Bossinger and Fourier.
\end{rem}

\subsection{Berenstein-Kazhdan potential functions and Reineke vectors}
In analogy to \eqref{eq:GHKKpot} we define a potential function on the $\mathcal{A}$-cluster variety associated to $\unip$. Note that the frozen $\mathcal{A}$-cluster variables do not change under mutation and are therefore independent of the given $\mathcal{A}$-cluster seed. We define a function by a local condition around the frozen variable in an optimized seed as follows.
\begin{defi} For $a\in [n-1]$ the function $f_{\chi,a}$ on $\unip$ is defined by requiring for reduced words $\ii=(i_1, \dots, i_N)$ of $w_0$ with $i_N=n-a$ that
$$
f_{\chi,a}= \frac{\Delta_{\ell [a,a+1]_{\mathcal{T}_{\ii}}}}{{\Delta_{r [a,a+1]_{\mathcal{T}_{\ii}}}}}.
$$
We further define
\begin{equation*}
f_{\chi}:=\sum_{a\in [n-1]} f_{\chi,a}
\end{equation*}
and call $f_{\chi}$ the \emph{Berenstein-Kazhdan (BK) potential function}. 
\end{defi}
Note that, by Lemma \ref{optimized}, the function $f_{\chi,a}$ is well-defined and we have
$$ f_{\chi,a}=\frac{\Delta_{\{a, a+2, \ldots,n\}}}{\Delta_{\{a+1, \ldots,n\}}}.$$

\begin{rem} The naming comes from the fact that $f_{\chi}$ appears in the \emph{decoration function} $f_{G,\chi}$ introduced by Berenstein and Kazhdan as part of the data of a unipotent crystal structure on $G=\text{SL}_n$. Namely by \cite[Example 1.10]{BK}, we have
$$f_{G,\chi}(g)=\sum_{a\in [n-1]}f_{\chi,a}(g)+\frac{\Delta^{\{1,\ldots,n-a-1,n-a+1\}}_{\{a+1, \ldots,n\}}(g)}{\Delta_{\{a+1,\ldots,n\}}(g)},$$
where $\Delta^{\{1,\ldots,n-a-1,n-a+1\}}_{\{a+1, \ldots,n\}}(g)$ is the determinant of the submatrix of $g$ with rows labeled by $\{1,\ldots,n-a-1,n-a+1\}$ and columns labeled by $\{a+1, \ldots,n\}$ and $\Delta_S=\Delta_S^{[\#S]}$. 

Note that in contrary to \cite{BK} we write $f_{\chi}$ as a Laurent polynomial in the coordinates of the $\mathcal{A}$-cluster torus $\TTia$. 
\end{rem}

The BK potential $f_\chi$ is related to the function $\rai$ arising from the crystal operations (see \eqref{rpot}) via the maps $\ioti\in \Hom(\TTia, \TTis)$ given by
\begin{align}\notag
\ioti:  \TTia=\mathbb{G}_m^{V(\mathcal{T}_{\ii})}& \rightarrow \mathbb{G}_m^{\mathcal{T}_{\ii}}=  \TTis  \\ \label{iotavar}
 x &\mapsto \left(\frac{x_{\ell (T)}}{x_{r(T)}}\right)_{T\in \mathcal{T}_{\ii}},
 \end{align}
where $\ell(T)$ and $r(T)$ are defined in \eqref{oben}.

\begin{rem}
The map $\ioti$ appears in \cite[Equation (4.14)]{BZ2}) as a crucial part of the solution of a factorization problem. In analogy to the naming of Chamber Ansatz we call $\ioti$ \emph{Neighbour Ansatz}. \end{rem}

The family $(\ioti)$ is compatible with $\mathcal{A}$-mutations in the following sense:
 \begin{prop}\label{diagA} For two reduced words $\ii$ and $\jj$ for $w_0$ the following diagram commutes.
$$
 \xymatrix{\TTia \ar[rr]^{\ioti}\ar[dd]_{\ttia} & & \TTis\ar[dd]^{\trs_{\jj}^{\ii}} \\
& \\
\TTiap \ar[rr]^{\iotj} & & \TTisp.}
$$
 \end{prop}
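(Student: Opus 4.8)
The plan is to follow the strategy of the proof of Proposition \ref{diagX}, reducing to a single braid move and then verifying commutativity by an explicit monomial computation. Since commutation-equivalent reduced words give rise to the same tiling, for such words all four maps in the square are identities and there is nothing to prove; and since the transition maps $\ttia$ and $\trs^{\ii}_{\jj}$ are each defined by composing the elementary mutations along a sequence of moves connecting $\ii$ and $\jj$, the square for an arbitrary pair $(\ii,\jj)$ is obtained by vertically stacking the squares for the intermediate steps. Hence by induction it suffices to treat the case where $\jj$ arises from $\ii$ by a single braid move, i.e. where $\mathcal{T}_{\jj}$ is the flip of $\mathcal{T}_{\ii}$ at a hexagon $\h=\{[s,t],[s,u],[t,u]\}$ with $s<t<u$ as in \eqref{hexbraid}.

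First I would record the combinatorics of the flip. Writing $\h$ in the form of \eqref{hexbraid}, the seven vertices involved are $v_S$, $v_{S\cup\{s\}}$, $v_{S\cup\{t\}}$, $v_{S\cup\{u\}}$, $v_{S\cup\{s,t\}}$, $v_{S\cup\{t,u\}}$, $v_{S\cup\{s,t,u\}}$, and the flip replaces the interior vertex $v_{S\cup\{t\}}$ by $v_{S\cup\{s,u\}}$, all other vertices being common to $\mathcal{T}_{\ii}$ and $\mathcal{T}_{\jj}$. By the construction of the seed $\SEi$ recalled in Section \ref{clusterintro}, the braid move is exactly the $\mathcal{A}$-cluster mutation $\ttia$ at the vertex $v_{S\cup\{t\}}$, whose exchange relation is the three-term determinantal identity
$$\Delta_{S\cup\{t\}}\cdot\Delta_{S\cup\{s,u\}}=\Delta_{S\cup\{s\}}\Delta_{S\cup\{t,u\}}+\Delta_{S\cup\{u\}}\Delta_{S\cup\{s,t\}}$$
among chamber minors. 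Its validity can either be quoted from \cite{BFZ2} or checked directly by a short-Pl\"ucker expansion.

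Abbreviating the vertex coordinates of $x\in\TTia$ by $B=x_{v_{S\cup\{s\}}}$, $C=x_{v_{S\cup\{t\}}}$, $D=x_{v_{S\cup\{u\}}}$, $E=x_{v_{S\cup\{s,t\}}}$, $F=x_{v_{S\cup\{t,u\}}}$, the mutated coordinate is $C'=(BF+DE)/C$ and all others are unchanged. Next I would compute both composites on the three tiles of the hexagon; away from $\h$ the interior vertex $v_{S\cup\{t\}}$ does not occur, so $\ioti$ and $\iotj$ agree there while $\trs^{\ii}_{\jj}$ acts as the identity, leaving nothing to check. Using $(\ioti x)_T=x_{\ell(T)}/x_{r(T)}$ and \eqref{oben}, one finds $(\ioti x)_{[s,t]}=B/C$, $(\ioti x)_{[s,u]}=E/F$, $(\ioti x)_{[t,u]}=C/D$; substituting these into the braid formula \eqref{smutb} for $\trs^{\ii}_{\jj}$ gives the values $(BF+DE)/(CF)$, $B/D$, $EC/(BF+DE)$ on the tiles $[s,t]$, $[s,u]$, $[t,u]$ of $\mathcal{T}_{\jj}$. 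On the other hand, applying $\iotj$ to $\ttia x$ on the tiles of $\mathcal{T}_{\jj}$, namely $[s,t;S\cup\{u\}]$, $[s,u;S]$, $[t,u;S\cup\{s\}]$, and again using \eqref{oben} together with $C'=(BF+DE)/C$, yields precisely $C'/F=(BF+DE)/(CF)$, $B/D$, $E/C'=EC/(BF+DE)$. The two composites therefore agree tile by tile.

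The only genuinely non-routine step is the identification in the second paragraph: matching the braid move with mutation at $v_{S\cup\{t\}}$ and pinning down the exact exchange relation $C\,C'=BF+DE$ rather than one of the other pairings $BD+EF$ or $BE+DF$. This is the same input already used on the $\mathcal{X}$-side in Proposition \ref{diagX}, and once it is in hand the remainder is the bookkeeping computation above. I expect no further obstacle, since $\ioti$, $\trs^{\ii}_{\jj}$ and $\ttia$ are all subtraction-free rational maps and the verification is a finite check on the coordinates entering the hexagon.
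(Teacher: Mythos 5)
Your proposal is correct and follows essentially the same route as the paper's proof: the paper likewise reduces by induction to a single braid move, identifies $\ttia$ with the $\mathcal{A}$-cluster mutation at the inner vertex of the hexagon, and leaves the rest as a "direct computation". You have simply written out that computation explicitly (and your coordinate bookkeeping on the six boundary vertices, the exchange relation $C\,C'=BF+DE$, and the tile-by-tile comparison with \eqref{smutb} all check out).
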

 \begin{proof}By induction we can assume that $\mathcal{T}_{\ii}$ and $\mathcal{T}_{\jj}$ are obtained from each other by braid move corresponding to a flip of a hexagon $\mathcal{H}\subset \mathcal{T}_{\ii}$ as in \eqref{hexbraid}. By direct computation it can be checked in this case that $\ttia$ is given by mutation at the inner vertex of $\mathcal{H}$ and that the diagram commutes.
 \end{proof}
Using the dual Crossing Formula (Theorem \ref{dualcrossing}) and
Theorem \ref{rtrans} we obtain:
  \begin{thm}\label{BK=Reineke} Let $\ii$ be a reduced word for $w_0$ and $a\in [n-1]$. We have
  \begin{equation}\label{bk=r}
\restr{f_{\chi, a}}{\TTia}=\rai \circ \ioti
 \end{equation}
Furthermore, the BK-potential
 $\restr{f_{\chi}}{\TTia}=\sum_{a \in [n-1]} \rai \circ \ioti$
 is a Laurent polynomial with coefficients in $\{0,1\}$ and exponents in $\{-1,0,1\}$ under the change of variables \eqref{iotavar}.
 \end{thm}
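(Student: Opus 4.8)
The plan is to mirror the proof of Theorem \ref{GHKK=Reineke}, replacing the Chamber Ansatz $\CAid$ and the $\mathcal{X}$-mutations by the Neighbour Ansatz $\ioti$ and the $\mathcal{A}$-mutations $\ttia$. First I would establish the identity \eqref{bk=r}. Both sides transform compatibly under a change of reduced word: the left-hand side $\restr{f_{\chi,a}}{\TTia}$ because $f_{\chi,a}$ is a globally defined function on $\unip$ whose frozen variables are mutation invariant, so passing between $\TTia$ and $\TTiap$ is governed by $\ttia$; the right-hand side $\rai\circ\ioti$ because Theorem \ref{rtrans} gives $\rai=\raj\circ\trs^{\ii}_{\jj}$ and Proposition \ref{diagA} gives $\trs^{\ii}_{\jj}\circ\ioti=\iotj\circ\ttia$, whence $\rai\circ\ioti=\raj\circ\iotj\circ\ttia$. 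Since both sides are intertwined by the same mutation, it suffices by Lemma \ref{optimized} to verify \eqref{bk=r} for one optimized word in each commutation-and-braid class.

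Concretely I reduce to the case where the tile $[a,a+1]$ meets the right boundary of $\mathcal{T}_{\ii}$ in two edges, i.e. $\SEi$ is optimized for $c_{\{a+1,\dots,n\}}$. Then $[a,a+1]$ is simultaneously the terminal tile $\li^a_{n-1}$ and $\li^{a+1}_{n-1}$ of its two strips, so the unique dual $a$-crossing is $\gamma=(a,a+1)$ and $\mathbf{R}_a^*(\mathcal{T}_{\ii})=\{\rvec{(a,a+1)}\}$, giving $\rai(w)=w_{[a,a+1]}$. Applying $\ioti$ yields $\rai\circ\ioti(x)=(\ioti x)_{[a,a+1]}=x_{\ell([a,a+1])}/x_{r([a,a+1])}=\Delta_{\ell[a,a+1]}/\Delta_{r[a,a+1]}$, which is exactly the defining expression of $f_{\chi,a}$ in the optimized seed. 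This proves \eqref{bk=r}.

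For the structural statement I would substitute the dual Crossing Formula description \eqref{darvecs} and the explicit form \eqref{iotavar} of the Neighbour Ansatz to obtain, for each $a$,
\begin{align*}
\restr{f_{\chi,a}}{\TTia}(x)=\rai\circ\ioti(x)=\sum_{\gamma\in\mathcal{R}_a^*}\prod_{T\in\mathcal{T}_{\ii}}\left(\frac{x_{\ell(T)}}{x_{r(T)}}\right)^{\rvecds{\gamma}_T}=\sum_{\gamma\in\mathcal{R}_a^*}\prod_{v}x_v^{c_v(\gamma)},\qquad c_v(\gamma)=\sum_{T:\,\ell(T)=v}\rvecds{\gamma}_T-\sum_{T:\,r(T)=v}\rvecds{\gamma}_T.
\end{align*}
The claim then reduces to two combinatorial facts: (i) every exponent $c_v(\gamma)$ lies in $\{-1,0,1\}$, and (ii) the resulting monomials are pairwise distinct, both for fixed $a$ and across different $a$, so that all coefficients equal $1$. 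Fact (ii) is the easier one: $\ioti$ is an invertible monomial map of tori of equal dimension $\#V(\mathcal{T}_{\ii})=\#\mathcal{T}_{\ii}=N$, so its cocharacter transpose is injective, and the vectors $\{\rvec{\gamma}\mid\gamma\in\mathcal{R}_a^*\}$ are distinct since a crossing is determined by its strip sequence; distinctness across $a$ follows from the fact that $f_{\chi,a}$ is the unique summand whose expansion involves the frozen variable $\Delta_{\{a+1,\dots,n\}}$ with a negative exponent.

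The main obstacle is fact (i), the bound $c_v(\gamma)\in\{-1,0,1\}$. Unlike the GHKK case, where the analogous bound dropped out of the enhanced Anderson-Mirkovi\'c formula (Theorem \ref{sAM}) applied to the tropical \emph{inverse} Chamber Ansatz, here the Neighbour Ansatz enters directly and I do not expect a one-line reduction. I would prove (i) by a local analysis at each vertex $v$: since $\rvec{\gamma}\in\{-1,0,1\}^{\mathcal{T}_{\ii}}$ is supported on the corner tiles of $\gamma$, the quantity $c_v(\gamma)$ records the signed number of turns of the simple path $\gamma$ at $v$, and the absence of cycles (anti-symmetry of $\leq_{n+a}$, used throughout Section \ref{dualcrossing1}) forces this net count to have absolute value at most one. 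Alternatively, one can run the same hexagon-flip induction as in the Crossing Formula: the bound is manifest in the optimized base case (a single ratio of two chamber minors), and one checks that it is preserved under the substitution governed by Proposition \ref{diagA}, using that $\mathcal{R}_a^*$ is a sublattice (Remark \ref{reinlat}) and the explicit description of the crossings in each fibre $\pi^{-1}(\lambda)$.
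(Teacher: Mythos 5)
Your treatment of the identity \eqref{bk=r} is correct and coincides with the paper's argument: both sides are intertwined by $\mathcal{A}$-mutation via Theorem \ref{rtrans} and Proposition \ref{diagA}, which reduces the claim to an optimized seed (Lemma \ref{optimized}), where $\mathbf{R}_a^*(\mathcal{T}_{\ii})$ is the single vector supported on $[a,a+1]$ and both sides equal $x_{\ell([a,a+1])}/x_{r([a,a+1])}$.

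The second half of your proposal goes astray, and the problem is one of reading rather than of a computation you could not finish. The theorem asserts the bounds on coefficients and exponents \emph{under the change of variables \eqref{iotavar}}, i.e.\ for $f_{\chi}$ rewritten in the variables $w_T=x_{\ell(T)}/x_{r(T)}$ --- in other words for $\ra(w)=\sum_{\gamma\in\mathcal{R}_a^*}w^{\rvec{\gamma}}$ itself. In those variables the claim is immediate from the dual Crossing Formula and \eqref{darvecs}: every entry of $\rvec{\gamma}$ is $\sgn(s_{i+1}-s_i)$ or $0$ by \eqref{Rdef}, hence lies in $\{-1,0,1\}$, and distinct crossings have distinct vectors $\rvec{\gamma}$, so all coefficients are $0$ or $1$. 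That one observation is the entirety of the paper's proof of the structural statement. You instead set out to bound the exponents $c_v(\gamma)$ of the cluster variables $x_v$ in the expansion of $\rai\circ\ioti$ --- a different and substantially harder claim that the theorem does not make (contrast Theorem \ref{GHKK=Reineke}, where the bound genuinely lives in the target coordinates and requires Theorem \ref{sAM}). Moreover your two proposed routes to $c_v(\gamma)\in\{-1,0,1\}$ are only sketches: a dual $a$-crossing may have several turn tiles sharing the same vertex $v$, so acyclicity of $\leq_{n+a}$ does not by itself cap the net signed count at one, and the flip-induction alternative is asserted without verifying preservation of the bound under the substitution of Proposition \ref{diagA}. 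So your argument for the structural statement both overshoots what is claimed and leaves its key lemma unproved, whereas the intended statement needs none of this machinery.
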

 \begin{proof} In order to prove \eqref{bk=r} it suffices by Theorem \ref{rtrans} and Proposition \ref{diagA} to show the statement for the case that the tile $[a,a+1]$ intersects the right boundary of $\mathcal{T}=\mathcal{T}_{\ii}$ in two edges (i.e. $\ii$ is commutation equivalent to a reduced word ending with $i_N=a$). In this case we have by the definition of $\rai$ and Lemma \ref{optimized} 
 $\rai = x_{[a,a+1]_{\mathcal{T}}}=f_{\chi,a}\circ \ioti^{-1} (x)$. By the dual Crossing Formula we obtain that the exponents of $\rai$ are contained in $\{-1,0,1\}$. 
 \end{proof}

 In analogy to \eqref{ghkkcone} we define the polyhedral cones 
   \begin{equation*}
\Csebk:=\left\{z \,\,\middle\mid\,\, \left[\restr{f_{\chi,a}}{\TTse}\right]_{\trop} (z)\ge 0 \right\}\subset \Hom(\GG, \TTse).
 \end{equation*}
 From Theorem \ref{duality} and Theorem \ref{BK=Reineke} we conclude:
 \begin{cor}\label{string=acluster} For any reduced word $\ii$ for $w_0$ we have
 $$\stc_{\ii}={[\ioti]}_{\trop}\left(\mathcal{C}_{\SEi}^{BK}\right).$$
 \end{cor}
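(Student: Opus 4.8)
The plan is to tropicalize Theorem \ref{BK=Reineke} and read off the result from the polar description of the string cone in Theorem \ref{duality}. The key structural input is that the Neighbour Ansatz $\ioti$ (see \cite[Theorem 4.8]{BZ2}) is a \emph{monomial} isomorphism of tori $\TTia \to \TTis$; consequently its tropicalization $[\ioti]_{\trop}$ is a $\Z$-linear isomorphism of cocharacter lattices $\Hom(\GG, \TTia) \to \Hom(\GG, \TTis)$. Being a lattice bijection, it will suffice to establish the equivalence
\[
z \in \Csebk \iff [\ioti]_{\trop}(z) \in \stc_{\ii},
\]
since applying $[\ioti]_{\trop}$ then gives $[\ioti]_{\trop}(\Csebk) = \stc_{\ii}$.

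First I would unwind the tropicalized potential. By Theorem \ref{BK=Reineke} we have $\restr{f_{\chi,a}}{\TTia} = \rai \circ \ioti$, and by the definition \eqref{rpot} the function $\rai$ is the subtraction-free Laurent polynomial $\rai(x) = \sum_{y \in \mathbf{R}_a^*(\mathcal{T}_{\ii})} x^y$. Its tropicalization is therefore the piecewise-linear function $w \mapsto \min_{y \in \mathbf{R}_a^*(\mathcal{T}_{\ii})} \langle y, w\rangle$, and since tropicalization is functorial under composition with the monomial map $\ioti$ I obtain
\[
[\restr{f_{\chi,a}}{\TTia}]_{\trop}(z) = \min_{y \in \mathbf{R}_a^*(\mathcal{T}_{\ii})} \langle y, [\ioti]_{\trop}(z)\rangle.
\]
Summing $f_{\chi} = \sum_{a} f_{\chi,a}$ turns tropically into a minimum over the union $\mathbf{R}^*(\mathcal{T}_{\ii}) = \bigcup_a \mathbf{R}_a^*(\mathcal{T}_{\ii})$, so that
\[
[\restr{f_{\chi}}{\TTia}]_{\trop}(z) = \min_{y \in \mathbf{R}^*(\mathcal{T}_{\ii})} \langle y, [\ioti]_{\trop}(z)\rangle.
\]

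With this in hand the defining inequality $[\restr{f_{\chi}}{\TTia}]_{\trop}(z) \geq 0$ of $\Csebk$ is exactly the condition $\langle y, [\ioti]_{\trop}(z)\rangle \geq 0$ for every $y \in \mathbf{R}^*(\mathcal{T}_{\ii})$, i.e. $[\ioti]_{\trop}(z) \in \mathbf{R}^*(\mathcal{T}_{\ii})^{\pol}$. By the first equality of Theorem \ref{duality} we have $\mathbf{R}^*(\mathcal{T}_{\ii})^{\pol} = \stc_{\ii}$, which yields the desired equivalence and hence $[\ioti]_{\trop}(\Csebk) = \stc_{\ii}$.

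The only genuinely delicate point is the bookkeeping of tropicalization conventions: one must ensure that the min--plus convention used to define $[\cdot]_{\trop}$ here is the same under which both the polar cone of Theorem \ref{duality} and the potential cone $\Csebk$ (in analogy with \eqref{ghkkcone}) are set up, so that the two occurrences of ``$\ge 0$'' refer to the same lattice pairing. Beyond that, the essential ingredient is the monomiality and invertibility of $\ioti$: these are what promote the pointwise equivalence above into an actual \emph{unimodular} identification of the two cones, rather than a mere containment or a linear-but-non-integral correspondence.
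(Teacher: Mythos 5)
Your argument is correct and is precisely the deduction the paper intends: the paper derives this corollary directly from Theorem \ref{duality} and Theorem \ref{BK=Reineke} without further elaboration, and your writeup simply makes explicit the tropicalization of $\restr{f_{\chi,a}}{\TTia}=\rai\circ\ioti$ into $\min_{y\in\mathbf{R}_a^*(\mathcal{T}_{\ii})}\langle y,\cdot\rangle$ composed with the unimodular map $[\ioti]_{\trop}$, followed by the identification $\mathbf{R}^*(\mathcal{T}_{\ii})^{\pol}=\stc_{\ii}$. Your attention to the min-plus convention and to the invertibility of the monomial map $\ioti$ correctly isolates the only points where care is needed.
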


From Theorem \ref{BK=Reineke} and Theorem \ref{GHKK=Reineke} we obtain the following relation between the potential functions $f_{\chi}$ arising from the theory geometric crystals and the potential function $W$ arising from the partial compactification of $L^{e,w_0}$:
\begin{cor}\label{relpot} Let $\ii$ be a reduced word for $w_0$. Then
$$\restr{f_{\chi,a }}{\TTia}=\restr{W_a}{\TTix}\circ \CAid \circ \ioti.$$
\end{cor}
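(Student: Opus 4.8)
The plan is to obtain this corollary as a purely formal consequence of the two preceding theorems, with no new computation required. The statement is an identity of regular functions on the torus $\TTia$, and both sides are built out of the same elementary maps, so the strategy is simply to compose the two identities already established and verify that the domains and codomains match so that the composition is well defined.

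Concretely, I would first recall from Theorem \ref{GHKK=Reineke} the identity
$$
\rai = \restr{W_a}{\TTix} \circ \CAid,
$$
an equality of functions on $\TTis$, where $\CAid \in \Hom(\TTis, \TTix)$ is the dual Chamber Ansatz and $\restr{W_a}{\TTix}$ is the GHKK potential restricted to the $\mathcal{X}$-cluster torus. Next I would recall from Theorem \ref{BK=Reineke} the identity
$$
\restr{f_{\chi,a}}{\TTia} = \rai \circ \ioti,
$$
an equality of functions on $\TTia$, where $\ioti \in \Hom(\TTia, \TTis)$ is the Neighbour Ansatz. Substituting the first identity into the second then gives
$$
\restr{f_{\chi,a}}{\TTia} = \rai \circ \ioti = \left( \restr{W_a}{\TTix} \circ \CAid \right) \circ \ioti = \restr{W_a}{\TTix} \circ \CAid \circ \ioti,
$$
which is exactly the assertion.

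The only point to check is the compatibility of the maps: $\ioti$ sends $\TTia$ to $\TTis$, the map $\CAid$ sends $\TTis$ to $\TTix$, and $\restr{W_a}{\TTix}$ is defined on $\TTix$, so the triple composite $\restr{W_a}{\TTix} \circ \CAid \circ \ioti$ is a well-defined function on $\TTia$ and the associativity used in the final step is unproblematic. There is in effect no hard step here: the substance of the statement is entirely contained in Theorems \ref{GHKK=Reineke} and \ref{BK=Reineke}, both of which rest on the transformation behaviour of $\rai$ under changes of reduced word (Theorem \ref{rtrans}) together with the crossing-formula interpretation of Reineke vectors, and this corollary merely records that the two factorizations of $\rai$ through $\CAid$ and $\ioti$ are compatible.
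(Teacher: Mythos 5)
Your proposal is correct and matches the paper's own derivation: the corollary is stated there as an immediate consequence of Theorems \ref{GHKK=Reineke} and \ref{BK=Reineke}, obtained by substituting $\rai = \restr{W_a}{\TTix}\circ\CAid$ into $\restr{f_{\chi,a}}{\TTia}=\rai\circ\ioti$ exactly as you do. The domain/codomain check you include is the only thing to verify, and it is unproblematic.
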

\begin{rem}
In \cite{GHKK} a result about the relation of the BK-potential function and $W$ is announced to appear in \cite{M2}.
\end{rem}

\begin{rem}
The relation given in Corollary \ref{relpot} does not arise from the coordinate change $\pp$ between $\mathcal{A}-$ and $\mathcal{X}-$cluster variables given in \cite[Equation (6)]{FG}, i.e. we have $\restr{f_{\chi,a }}{\TTia} \neq \restr{W_a}{\TTix}\circ \ppi$. 
 The reason for this is that the equality $p_{\ii}=\CAid \circ \ioti$ only holds restricted to vertices which do not correspond to frozen variables. Furthermore, the map $\ppi$ is not canonically defined for frozen variables.

In \cite{Ge} the first author introduces a canonical modification $\overline{\pp}_{\ii}$ of $\pp_{\ii}$ and  shows $\overline{\pp}_{\ii}=\CAid^{-1} \circ \ioti$.  The canonical $\pp$-map $\overline{\pp}_{\ii}$ furthermore relates the two main ingredients  $\rvec{}_{\ii} \,:\, \mathcal{R}_a^*(\mathcal{T}_{\ii}) \rightarrow \Z^{\mathcal{T}_{\ii}}$ and $\drvec{}_{\ii} \,:\, \mathcal{R}_a^*(\mathcal{T}_{\ii}) \rightarrow \Hom(\Z^{\mathcal{T}_{\ii}},\Z)$ of the dual Crossing Formula 
as follows:
In \cite{Ge} the first author shows $ \drvec{}_{\ii} = \tro{\ioti \circ {\CA_{\ii}^{-1}} } \circ   \rvec{}_{\ii} $. Consequently, after change of coordinates by $\CAi$ the linear form $\drvec{}_{\ii}  \gamma$ is obtained from the Reineke vector $\rvec{}_{\ii} \gamma$ by applying the tropicalized canonical $\pp$-map $[\overline{\pp}]_{\trop}$.
\end{rem}

\def\cprime{$'$} \def\cprime{$'$} \def\cprime{$'$} \def\cprime{$'$}

\end{document}